\documentclass[11pt]{article}
\usepackage[margin=1in]{geometry}
\usepackage{amsmath,amssymb,amsthm}
\usepackage{etoolbox}
\usepackage{mathtools}
\usepackage{graphicx}
\graphicspath{{figures/}}
\usepackage{booktabs}
\usepackage{microtype}
\usepackage{pdflscape}
\usepackage{float}
\usepackage{needspace}
\usepackage{tikz}
\usetikzlibrary{arrows.meta,positioning,calc}
\usepackage{xcolor}
\usepackage{hyperref}

\newtheorem{theorem}{Theorem}[section]
\newtheorem{proposition}[theorem]{Proposition}
\newtheorem{lemma}[theorem]{Lemma}
\newtheorem{corollary}[theorem]{Corollary}
\theoremstyle{definition}
\newtheorem{example}[theorem]{Example}
\newtheorem{remark}[theorem]{Remark}
\newcommand{\bestiaryendmark}{\,\tikz[baseline=-0.4ex]{\draw[line width=0.6pt] (0,0) circle (1.6pt);}}
\AtEndEnvironment{example}{\hfill\bestiaryendmark}
\AtEndEnvironment{remark}{\hfill\bestiaryendmark}

\newcommand{\Cov}{\operatorname{Cov}}
\newcommand{\Var}{\operatorname{Var}}
\newcommand{\Corr}{\operatorname{Corr}}
\newcommand{\sgn}{\operatorname{sgn}}
\newcommand{\dd}{\mathrm{d}}
\newcommand{\NN}{\mathrm{NN}}

\newenvironment{beast}[2]{%
  \par\medskip\needspace{5\baselineskip}%
  \noindent\textbf{#1}\ \ {\footnotesize\itshape #2}\par
  \begin{list}{}{%
    \setlength{\leftmargin}{6.2em}\setlength{\labelwidth}{5.6em}%
    \setlength{\labelsep}{0.6em}\setlength{\itemsep}{0pt}%
    \setlength{\parsep}{0pt}\setlength{\topsep}{2pt}%
    }%
}{\end{list}\medskip}

\title{A Bestiary of Brownian Processes\\[3pt]{\large A Field Guide to Boundary Conditions and Conditioning}}
\author{Lars Lau Raket}
\date{}
\hypersetup{
  pdftitle={A Bestiary of Brownian Processes: A Field Guide to Boundary Conditions and Conditioning},
  pdfauthor={Lars Lau Raket}
}

\begin{document}
\maketitle

\begin{abstract}
\noindent The covariance of Brownian motion is the Green kernel of the one-dimensional Laplacian with a pinned left endpoint and a free right endpoint. This correspondence between Green's functions for the Laplacian on an interval and Brownian covariance functions turns out to be a general phenomenon. We use this correspondence to organize Brownian-derived Gaussian processes by three operations: choosing a self-adjoint realization, recentering a path, and conditioning a fixed law. Local Robin and selected nonlocal boundary conditions yield a catalogue of twelve processes with closed-form kernels, Brownian-path constructions, spectra and traces. The periodic and Neumann--Neumann realizations annihilate the constants; projecting the source and imposing zero time average selects the corresponding covariance pseudoinverses. Recentering and conditioning can both give a Brownian path zero time average, but the resulting laws differ: a constraint alone does not specify a process. A rank-one conditioning identity gives a scale-invariant criterion for the covariance effect of a shrinking family of observations. For Brownian finite differences of any fixed positive order, the covariance effect vanishes uniformly as the spacing tends to zero. For $m$-fold integrated Brownian motion, derivative constraints of orders $1,\ldots,m$ remain detectable at interior times. For simulation, we identify which grid constructions are exact and which are approximate.
\end{abstract}

\section{Introduction}

\subsection{The Laplacian--Brownian correspondence}\label{sec:intro_correspondence}

Let $\{W_s\}_{s\ge 0}$ be standard Brownian motion: the continuous, zero-mean Gaussian process with independent increments, $W_0=0$, and $\Var(W_t - W_s) = |t-s|$. Its covariance function is
\[
 \Cov(W_s, W_t) = s \wedge t,
\]
where $s\wedge t = \min(s,t)$. For fixed $t\in(0,1)$, the function $K(s,t) = s\wedge t$ rises with unit slope until $s=t$ and is constant afterwards. It is continuous, with a unit downward jump in slope at~$t$. Its first derivative in~$s$ is $\mathbf{1}_{[0,t]}(s)$ almost everywhere, and its distributional second derivative is minus a unit point mass at~$t$. Thus
\begin{equation}\label{eq:BM_green_intro}
 -\partial_s^2 K(s,t) = \delta(s-t), \qquad K(s,t)\big|_{s=0} = 0, \qquad \partial_s K(s,t)\big|_{s=1} = 0 .
\end{equation}
Here $\delta$ is the \emph{Dirac delta}, the unit point mass characterized by $\int_0^1 f(s)\,\delta(s-t)\,\dd s = f(t)$ for every continuous~$f$. The distributional identity $-\partial_s^2 K = \delta(\cdot-t)$ expresses the geometry above: $K(\cdot,t)$ is affine except for a unit downward kink at~$t$. See Stakgold and Holst~\cite{Stakgold2011} for a rigorous treatment.

The three relations in~\eqref{eq:BM_green_intro} identify the Brownian covariance as the \emph{Green's function} of the Laplace operator $\mathcal{L} = -\partial_s^2 = -\dd^2/\dd s^2$ on $[0,1]$---the kernel of its inverse---with a Dirichlet (pinned) condition at $s=0$ and a Neumann (free) condition at $s=1$. For each interior source point~$t$, both conditions can be read from the kernel: $K(0,t)=0$, and $\partial_s K(s,t)|_{s=1}=0$ because $K(\cdot,t)$ is flat to the right of~$t$. Brownian motion on the half-line $[0,\infty)$ needs no right-end boundary condition. The Neumann condition appears when its covariance on the bounded interval is represented as a Green kernel; it is not an extra restriction on the random path. Indeed, Brownian paths are almost surely nowhere differentiable. Remark~\ref{rem:neumann_vs_derivative} develops this distinction.

\paragraph{Why Green's functions of the Laplacian are covariance functions.}
The relationship extends beyond Brownian motion and these particular endpoint conditions. Call a set of boundary conditions~$\mathcal{B}$ \emph{admissible} when it makes $\mathcal{L}=-\dd^2/\dd s^2$ self-adjoint and strictly positive on its domain. The inverse is then an integral operator with a symmetric, positive-semidefinite kernel, hence a Gaussian covariance. Self-adjointness alone is not enough: an invertible operator may have negative eigenvalues, in which case its symmetric Green's function is not a covariance.

The operator is the process's precision. Integration by parts expresses its energy as derivative roughness plus boundary terms: a formal path-density interpretation, not finite energy carried by typical Brownian draws. For the nondegenerate Robin cases with nonnegative parameters, discretizing this energy gives a discrete Laplacian with endpoint modifications whose inverse agrees exactly with the sampled Green's function (Remark~\ref{rem:fem_exact}).

The base cases are Brownian motion (pinned left, free right), the Brownian bridge (pinned at both ends), and time-reversed Brownian motion (free left, pinned right). Brownian motion itself is integrated white noise. Starting from these familiar specimens, we build a catalogue using admissible boundary conditions and linear constraints.

\subsection{Three operations}\label{sec:two_mechanisms}

We distinguish three operations, each with its own starting point and covariance calculation. Different operations can produce the same law, so the resulting families overlap. We also distinguish the differential \emph{expression} $-\dd^2/\dd s^2$, a \emph{realization} $(\mathcal{L},\mathcal{B})$ obtained by fixing its domain, and the random \emph{path} of a process whose covariance is supplied by the inverse. Figure~\ref{fig:two_mechanisms} summarizes the three operations.

\paragraph{1. Choose a realization.}
The expression $-\dd^2/\dd s^2$ becomes an operator only after its domain is specified. Its homogeneous equation has a two-dimensional solution space, and two independent endpoint conditions specify a realization, though not necessarily a self-adjoint or invertible one. Admissible choices give different Green's functions and hence different processes. An ordinary inverse requires a trivial null space; in the degenerate cases, a source correction and a normalization are needed instead. Section~\ref{sec:counting} distinguishes these requirements from the endpoint conditions themselves.

\paragraph{2. Recenter a path.}
Subtract a functional of a given path: its value at a time, $W_s-W_\tau$, or its average, $W_s-\int_0^1W_u\,\dd u$. This is a linear transformation of an existing process, not conditioning or the specification of a new operator. Its covariance follows by bilinearity. The balanced normalization for the degenerate operators in Section~\ref{sec:degenerate} has this pathwise interpretation; Section~\ref{sec:bc_vs_constraints} compares it with conditioning.

\paragraph{3. Condition a fixed law.}
Given a zero-mean Gaussian process~$X$ with covariance~$K$, condition on a linear observation $\ell(X)=0$ by subtracting from each path value its Gaussian regression on~$\ell(X)$. This changes~$K$ by a rank-one subtraction and requires a specified starting covariance and a positive, finite $\Var(\ell(X))$. The precision operator need not remain the same. Conditioning restricts the energy form to a smaller domain, so the associated operator, or the space on which it acts, may change. For example, conditioning Brownian motion on $W_1=0$ gives the Brownian bridge: its precision has two Dirichlet conditions rather than one Dirichlet and one Neumann condition. Whether the conditioned covariance is another realization's Green kernel must therefore be established separately. Interior pinning, tying two path values together, and imposing zero time integral are examples of this operation.

Recentering and conditioning can use the same process and functional yet give different laws. For point evaluation and time averaging, adding an independent constant offset of increasing variance makes conditioning approach recentering (Proposition~\ref{prop:diffuse}). At finite variance they generally differ, as interior pinning illustrates (Section~\ref{sec:interior_pin}), but they already agree when the regression coefficient is identically one.

\begin{figure}[H]
 \centering
 \begin{tikzpicture}[
   font=\small,
   bx/.style={draw, rounded corners=2pt, align=center, inner sep=4pt, minimum height=8.5mm},
   hd/.style={font=\small\itshape, anchor=west},
   up/.style={font=\scriptsize, midway, above=0.6mm},
   dn/.style={font=\scriptsize, midway, below=0.6mm},
   >={Stealth[length=2mm,width=1.6mm]}]
 \node[hd] at (0,12mm)
   {\textbf{1.\ Choose a realization} \ (Sections~\ref{sec:robin}--\ref{sec:degenerate})};
 \node[bx, anchor=west] (bc) at (0,0) {boundary\\conditions $\mathcal{B}$};
 \node[bx, anchor=west] (op) at (48mm,0) {realization\\$(\mathcal{L},\mathcal{B})$};
 \node[bx, anchor=west] (pr) at (98mm,0) {process $X$,\\covariance $G$};
 \draw[->] (bc.east) -- node[up] {fix a domain} (op.west);
 \draw[->] (op.east) -- node[up] {invert} (pr.west);
 \node[hd] at (0,-23mm)
   {\textbf{2.\ Recenter a path} \ and \ \textbf{3.\ Condition a fixed law} \ (Sections~\ref{sec:conditioning}, \ref{sec:bc_vs_constraints})};
 \node[bx, anchor=north west] (p0) at (0,-29mm) {process $X$,\\covariance $K$ \emph{given}};
 \node[bx, anchor=west] (rc) at (88mm,-30mm) {$X_s-\ell(X)$};
 \node[bx, anchor=west] (cd) at (88mm,-44mm) {$X\mid\ell(X)=0$};
 \coordinate (br) at (52mm,-37mm);
 \draw (p0.east) -- (br);
 \draw[->] (br) |- node[up, pos=0.72] {subtract it outright} (rc.west);
 \draw[->] (br) |- node[dn, pos=0.72] {subtract fitted path values} (cd.west);
 \node[font=\scriptsize\itshape, gray, anchor=north west, align=left] at (0,-53mm)
   {For point values and averages, the outputs generally differ (Section~\ref{sec:interior_pin}), but\\agree in the diffuse-offset limit---and sometimes at finite variance (Proposition~\ref{prop:diffuse}).};
 \end{tikzpicture}
 \caption{Three operations, two starting points. The upper row begins with boundary conditions defining a realization; when admissible, its inverse supplies a covariance kernel. The lower branches begin with a specified covariance. Recentering subtracts the observed functional from every path value; conditioning subtracts the regression of each path value on that functional and requires a positive, finite $\Var(\ell(X))$. Section~\ref{sec:degenerate} constructs covariance pseudoinverses for the degenerate realizations. Section~\ref{sec:bc_vs_constraints} compares recentering and conditioning, and explains why an additional path constraint is not an additional operator boundary condition.}
 \label{fig:two_mechanisms}
\end{figure}

For a degenerate realization, ordinary inversion cannot supply the starting covariance~$K$ or the observation variance $\Var(\ell(X))$. Section~\ref{sec:degenerate} instead uses a corrected source and the balanced condition $\int_0^1 G(s,t)\,\dd s=0$ to construct a covariance pseudoinverse. This selects a process rather than conditions a previously specified law.

\subsection{Contributions and relation to prior work}

The link between Green's functions, covariance functions and smoothing splines is classical. The reproducing-kernel-Hilbert-space view of smoothing splines goes back to Kimeldorf and Wahba~\cite{Kimeldorf1970,Wahba1990}. The correspondence between linear stochastic differential equations and covariance functions underlies the state-space and SPDE views of Gaussian processes~\cite{Whittle1954,Lindgren2011,Sarkka2019,Rasmussen2006}, while Green's functions for Sturm--Liouville problems are standard~\cite{Stakgold2011,Zettl2005,Karatzas1991,Morters2010}. We specialize and extend the direct, RKHS-free treatment of~\cite{Raket2021}, which used the Laplacian and the Poisson equation to compute spline solutions and construct Brownian processes by Gaussian conditioning. Markussen~\cite{Markussen2013} used the operator--covariance correspondence for inference in functional mixed-effects models, replacing matrix computations with approximations in function and operator spaces. Raket and Markussen~\cite{RaketMarkussen2014} extended that approach to two-dimensional domains. Our aim is complementary: rather than develop inference, we catalogue the processes produced by the one-dimensional Laplacian and the boundary conditions that produce them. The underlying self-adjoint extension theory is developed in~\cite{Zettl2005}.

The contributions follow the three operations:
\begin{enumerate}
\item An \emph{explicit atlas} of boundary-defined processes. A single closed-form Green's function describes the two-parameter Robin family (Section~\ref{sec:robin}), including Brownian motion, the bridge and time-reversed Brownian motion at the corners of $[0,\infty]^2$. For the selected nonlocal conditions (Section~\ref{sec:nonlocal}), we give a self-adjointness criterion and an $\alpha$-family connecting those two Brownian motions with the anti-periodic process. The degenerate periodic and Neumann--Neumann realizations (Section~\ref{sec:degenerate}) share a constant null space. They require the corrected source $\delta(s-t)-1$ and a \emph{balanced} side condition, $\int_0^1 G(s,t)\,\dd s = 0$, to select a unique kernel, following the terminology of~\cite{Raket2021}. The atlas also marks a boundary of the construction: an endpoint--integral condition such as $f(0)=\int_0^1 f$ cannot define a symmetric realization when paired with another independent boundary condition of the type considered here. We do not attempt to exhaust the self-adjoint realizations.
\item A \emph{comparison of recentering and conditioning} (Sections~\ref{sec:conditioning} and~\ref{sec:bc_vs_constraints}). A single rank-one identity gives bridges and processes subject to interior pins, tied values, weighted observations or integral constraints. We also make the Markov decomposition at an interior pin explicit. Recentering and conditioning generally give different laws even for the same functional, as the two constructions of zero-time-average Brownian paths demonstrate. For the degenerate processes, recentering at an interior time and conditioning the value there to be zero differ by an explicit positive-semidefinite rank-one kernel. This gap vanishes as an independent constant offset of increasing variance is added.
\item A \emph{limiting criterion for conditioning} (Section~\ref{sec:detectability}).\footnote{We use ``detectable'' only in a covariance-theoretic sense, not a statistical one. A constraint $\ell(X)=0$ is \emph{detectable} for~$X$ when conditioning on it changes the covariance, equivalently when $X_s$ has nonzero covariance with $\ell(X)$ for some~$s$.} For the linear observations considered here, conditioning changes the covariance precisely when the observation has nonzero covariance with some path value. For a shrinking family of observations of positive variance, the covariance correction tends to zero pointwise exactly when $\Corr\bigl(X_s,\ell_\varepsilon(X)\bigr)\to 0$ at every~$s$ of positive marginal variance. For Brownian finite differences of any fixed positive order, the correction vanishes uniformly as the spacing tends to zero. For $m$-fold integrated Brownian motion, value and derivative constraints of orders up to~$m$ are detectable at interior times.
\end{enumerate}

\section{Green's functions and Gaussian covariance}

\subsection{The Laplace operator and Green's functions on \texorpdfstring{$[0,1]$}{[0,1]}}\label{sec:green}

Throughout, $\mathcal{L} = -\dd^2/\dd s^2$ acts on real functions on $[0,1]$. We use $\mathcal{L}_s$ to indicate action in the variable~$s$, and $\partial_s$, $\partial_s^2$ for partial derivatives in~$s$. Homogeneous boundary conditions~$\mathcal{B}$ specify a domain $\operatorname{dom}(\mathcal{L})\subset L^2[0,1]$. When these conditions are understood, $\mathcal{L}$ denotes the realization $(\mathcal{L},\mathcal{B})$, including its domain. If the realization is invertible, its \emph{Green's function} $G(s,t)$ is the kernel of~$\mathcal{L}^{-1}$. For fixed $t\in(0,1)$, $G(\cdot,t)$ is continuous, piecewise in $H^2$, satisfies~$\mathcal{B}$ and has its only singularity at $s=t$. It solves, distributionally,
\begin{equation}\label{eq:greens}
 \mathcal{L}_s\, G(s,t) = \delta(s-t),
\end{equation}
where $\delta$ is the Dirac delta of Section~\ref{sec:intro_correspondence}. Away from $s=t$, we have $\mathcal{L}_s G(s,t)=0$, so $G(\cdot,t)$ is affine on each side of~$t$. Equation~\eqref{eq:greens} also determines how the two pieces meet.
Write $G(t^+,t)$ and $G(t^-,t)$ for the limits as $s\to t$ from above and below, with $\partial_s G(t^+,t)$ and $\partial_s G(t^-,t)$ denoting the corresponding derivative limits. The matching conditions are continuity and a unit downward jump in slope:
\begin{equation}\label{eq:matching}
 G(t^-,t)=G(t^+,t) \quad\text{(continuity)}, \qquad \partial_s G(t^+,t) - \partial_s G(t^-,t) = -1 \quad\text{(unit kink)} .
\end{equation}
The sign follows from integrating~\eqref{eq:greens} across~$t$: for small $\varepsilon>0$,
\[
\partial_s G(t^-,t) - \partial_s G(t^+,t) = \int_{t-\varepsilon}^{t+\varepsilon}\delta(s-t)\,\dd s = 1.
\] The boundary conditions~$\mathcal{B}$ supply the remaining two relations.

We call $(\mathcal{L},\mathcal{B})$ \emph{admissible} when $\mathcal{L}$ is self-adjoint on the domain specified by~$\mathcal{B}$ and strictly positive: $\langle f,\mathcal{L}f\rangle>0$ for every nonzero $f\in\operatorname{dom}(\mathcal{L})$. In the degenerate cases of Section~\ref{sec:degenerate}, only $\langle f,\mathcal{L}f\rangle\ge 0$ holds, and a pseudoinverse replaces the inverse. For an admissible realization, self-adjointness gives $G(s,t)=G(t,s)$ and the positive spectrum makes~$G$ positive semidefinite, hence a covariance function.

\begin{remark}[Invertibility is not positivity]\label{rem:admissible}
Self-adjointness and invertibility do not imply positivity. If $\mathcal{L}$ has a negative eigenvalue, its Green's function remains symmetric but is not positive semidefinite and therefore cannot be a covariance. Section~\ref{sec:negative_branch} identifies where this occurs in the Robin parameter space.
\end{remark}

\subsection{Gaussian processes}

Throughout, $\langle f,g\rangle = \int_0^1 f(s)\,g(s)\,\dd s$ denotes the inner product of $L^2[0,1]$, and $\|f\|^2 = \langle f,f\rangle$. Integral operators act by $(\mathcal{K}f)(s) = \int_0^1 K(s,u)f(u)\,\dd u$.

The law of a zero-mean Gaussian process $\{X_s\}_{s\in[0,1]}$ is determined by its covariance $K(s,t) = \Cov(X_s,X_t)$. This symmetric kernel is \emph{positive semidefinite}: $\sum_{i,j} c_i c_j K(s_i,s_j)\ge 0$ for every finite point set $\{s_i\}$ and all real coefficients $c_i$. We write $\Var(X_s) = K(s,s)$ and, when both marginal variances are positive, $\Corr(X_s,X_t) = K(s,t)/\sqrt{K(s,s)K(t,t)}$.

Sections~\ref{sec:conditioning}--\ref{sec:detectability} use two kinds of linear functional: integrals $\ell(X) = \int_0^1 X_u\,g(u)\,\dd u$ with $g\in L^2[0,1]$, and finite combinations of point evaluations $\ell(X) = \sum_i c_i X_{s_i}$. Both lie in the closed linear span of the variables $X_s$ in $L^2(\mathbb{P})$ and are therefore Gaussian. Applying~$\ell$ to the covariance arguments gives their cross-covariance with the process and their variance:
\begin{equation}\label{eq:functional_cov}
 K_\ell(s) = \Cov\bigl(X_s,\ell(X)\bigr), \qquad V_\ell = \Var\bigl(\ell(X)\bigr) = \ell\bigl(K_\ell\bigr) .
\end{equation}
For an integral functional, $K_\ell(s) = \int_0^1 K(s,u)g(u)\,\dd u = (\mathcal{K}g)(s)$ and $V_\ell = \langle g,\mathcal{K}g\rangle$, which is finite whenever $K$ is bounded. For point evaluations, $K_\ell(s) = \sum_i c_i K(s,s_i)$ and $V_\ell = \sum_{i,j}c_ic_jK(s_i,s_j)$. Although point evaluation is not a bounded functional on $L^2[0,1]$, the random variable it produces is square-integrable and belongs to the closed linear span of $\{X_s\}$ in $L^2(\mathbb{P})$. Section~\ref{sec:detectability} studies families of observations with no limit in that span, including Brownian difference quotients.

\subsection{From Green's function to covariance}\label{sec:from_green_to_cov}

We now construct the Gaussian process whose covariance is the Green's function of an admissible $(\mathcal{L},\mathcal{B})$, proving the claim in Section~\ref{sec:green}. We first use the eigenfunctions of the realization, then give a stochastic-integral construction. Both use a factor of the covariance operator rather than the covariance operator itself.

\paragraph{Spectral construction.}
Let $(\mu_n,\varphi_n)_{n\ge 1}$ be the eigenvalues and eigenfunctions of $(\mathcal{L},\mathcal{B})$, with $\mathcal{L}\varphi_n = \mu_n\varphi_n$ and $\varphi_n\in\operatorname{dom}(\mathcal{L})$, and choose the $\varphi_n$ to form an orthonormal basis of $L^2[0,1]$. Admissibility (Section~\ref{sec:green}) makes the eigenvalues real and positive. For the second-order realizations considered here, $\mu_n\to\infty$ as $n\to\infty$. The inverse has eigenvalues $\mu_n^{-1}$, since $\mathcal{L}^{-1}\varphi_n = \mu_n^{-1}\varphi_n$, and its kernel is
\begin{equation}\label{eq:mercer}
 G(s,t) = \sum_{n=1}^{\infty} \frac{1}{\mu_n}\,\varphi_n(s)\,\varphi_n(t) ,
\end{equation}
which for the piecewise-polynomial kernels of this paper converges absolutely and uniformly. Now let $Z_1,Z_2,\dots$ be independent standard normal variables and set
\begin{equation}\label{eq:KL}
 X_s \;=\; \sum_{n=1}^{\infty} \frac{1}{\sqrt{\mu_n}}\,Z_n\,\varphi_n(s) .
\end{equation}
For each~$s$, the series converges in $L^2(\mathbb{P})$ because $\sum_{n=1}^{\infty} \mu_n^{-1}\varphi_n(s)^2 = G(s,s)<\infty$. The limit is zero-mean Gaussian, and termwise calculation using~\eqref{eq:mercer} gives
\[
 \Cov(X_s,X_t) = \sum_{n=1}^{\infty}\frac{1}{\mu_n}\,\varphi_n(s)\varphi_n(t) = G(s,t) .
\]
Thus~$G$ is the covariance of a Gaussian process with covariance operator~$\mathcal{L}^{-1}$ and precision~$\mathcal{L}$. Equation~\eqref{eq:KL} is its Karhunen--Lo\`eve expansion.

The exponent $-\tfrac{1}{2}$ in~\eqref{eq:KL} reflects the use of a covariance \emph{square root}: $\mathcal{L}^{-1/2}$ maps the white noise $\xi = \sum_{n=1}^{\infty} Z_n\varphi_n$ to~$X$. Applying $\mathcal{L}^{-1}$ instead gives $\sum_{n=1}^{\infty} \mu_n^{-1}Z_n\varphi_n$, whose covariance is the kernel of $\mathcal{L}^{-2}$, namely $\int_0^1 G(s,u)G(u,t)\,\dd u$. That construction produces a smoother process with a different covariance. 
\paragraph{Stochastic-integral form.}
Suppose $H$ is any kernel that factors~$G$, in the sense that
\begin{equation}\label{eq:factorization}
 \int_0^1 H(s,u)\,H(t,u)\,\dd u = G(s,t) \qquad\text{for all } s,t\in[0,1] .
\end{equation}
Then, with $W$ a standard Brownian motion, the process $X_s$ defined by the stochastic integral
\begin{equation}\label{eq:stochastic_integral}
 X_s \;=\; \int_0^1 H(s,u)\,\dd W_u
\end{equation}
has covariance~$G$ by the It\^o isometry: $\Cov(X_s,X_t) = \int_0^1 H(s,u)H(t,u)\,\dd u$. The spectral construction uses $H(s,u) = \sum_{n=1}^{\infty} \mu_n^{-1/2}\varphi_n(s)\varphi_n(u)$, the kernel of $\mathcal{L}^{-1/2}$, but other covariance factors can be simpler. For Brownian motion, take $H(s,u) = \mathbf{1}_{[0,s]}(u)$. Then $\int_0^1 \mathbf{1}_{[0,s]}(u)\mathbf{1}_{[0,t]}(u)\,\dd u = s\wedge t$, and~\eqref{eq:stochastic_integral} reduces to $X_s = \int_0^s \dd W_u = W_s$. For the Brownian bridge, $H(s,u) = \mathbf{1}_{[0,s]}(u) - s$ gives $\int_0^1 H(s,u)H(t,u)\,\dd u = s\wedge t - st$. Both constructions reflect the factorization $\mathcal{L} = \partial^*\partial$, with $\partial = \dd/\dd s$, underlying the roughness penalty in Section~\ref{sec:intro_correspondence}. The boundary conditions determine the domains of $\partial$ and~$\partial^*$.

\paragraph{Energy and endpoint penalties.}
Integration by parts gives 
\[
\langle f,\mathcal{L}f\rangle = \int_0^1 f'(s)^2\,\dd s + f'(0)f(0) - f'(1)f(1)
\] 
for sufficiently regular~$f$, in particular for $f\in H^2[0,1]$. If $f$ satisfies the Robin conditions $f(0)=\lambda_0 f'(0)$ and $f(1)=-\lambda_1 f'(1)$ introduced in~\eqref{eq:robin}, with positive finite parameters, then $f'(0) = f(0)/\lambda_0$ and $f'(1) = -f(1)/\lambda_1$, giving
\begin{equation}\label{eq:energy}
 \langle f,\mathcal{L}f\rangle = \int_0^1 f'(s)^2\,\dd s + \frac{f(0)^2}{\lambda_0} + \frac{f(1)^2}{\lambda_1} ,
\end{equation}
with the convention that $\lambda_i=\infty$ removes the boundary term (a free end costs nothing), while $\lambda_i=0$ requires $f$ to vanish at that end (an infinitely stiff tether). Formally, \eqref{eq:energy} combines a derivative-roughness penalty with penalties on the endpoint values; typical Brownian paths do not have finite energy. For positive finite~$\lambda_i$, the endpoint stiffness~$1/\lambda_i$ gives the elastic-tether interpretation and explains the interpolation between pinned and free ends.

\begin{remark}[The exact finite-dimensional analogue]\label{rem:fem_exact}
Take the grid $s_i = ih$, $i=0,\dots,n$, with $h=1/n$, and discretize~\eqref{eq:energy} as
\[
 Q(f) = \frac{1}{h}\sum_{i=1}^{n}(f_i - f_{i-1})^2 + \frac{f_0^2}{\lambda_0} + \frac{f_n^2}{\lambda_1} = f^\top M f ,
\]
omitting $f_0$ when $\lambda_0=0$ and $f_n$ when $\lambda_1=0$. On the retained coordinates, $M$ is the tridiagonal discrete Laplacian with endpoint penalties added to its corner entries:
\[
 M_{00} = \frac1h + \frac1{\lambda_0}, \qquad M_{nn} = \frac1h + \frac1{\lambda_1}, \qquad M_{ii} = \frac2h \ \ (0<i<n), \qquad M_{i,i+1}=M_{i+1,i}=-\frac1h .
\]
An endpoint coordinate appears in one difference term, contributing $\tfrac1h$ to the diagonal, compared with $2/h$ at an interior node. When $\lambda_i=\infty$, no endpoint penalty is added: this is the discrete Neumann condition. Unless $\lambda_0=\lambda_1=\infty$, when constants make~$M$ singular, a Gaussian vector with precision~$M$ has covariance~$M^{-1}$. This covariance is exact on the grid:
\[
 \bigl(M^{-1}\bigr)_{ij} = G(s_i,s_j) \qquad\text{exactly},
\]
for every~$n$. Indeed, $Q$ is the energy form~\eqref{eq:energy} restricted to continuous piecewise-linear functions on the grid. Each section $G(\cdot,s_j)$ is piecewise affine with its only possible kink at~$s_j$, so it already belongs to the trial space.
\end{remark}

For Brownian motion, the correspondence can also be checked directly.

\begin{proposition}[Brownian motion]\label{prop:BM}
The Brownian motion covariance $G(s,t) = s\wedge t$ is the Green's function of $\mathcal{L} = -\dd^2/\dd s^2$ on $[0,1]$, with $G(0,t)=0$ and $\partial_s G(1,t)=0$ for every interior source point $t\in(0,1)$.
\end{proposition}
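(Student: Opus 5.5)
The plan is to verify directly that $G(s,t)=s\wedge t$ meets the defining properties of a Green's function in Section~\ref{sec:green}, and then that it is the kernel of $\mathcal{L}^{-1}$ for the realization with $f(0)=0$, $f'(1)=0$.

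First, fix $t\in(0,1)$ and write $G(\cdot,t)$ piecewise: $G(s,t)=s$ for $s\le t$ and $G(s,t)=t$ for $s\ge t$. Each piece is affine, so $\mathcal{L}_sG=0$ away from $s=t$. Both pieces equal $t$ at $s=t$, which gives continuity. The slopes are $1$ on the left and $0$ on the right, so $\partial_sG(t^+,t)-\partial_sG(t^-,t)=-1$, which is the unit kink in~\eqref{eq:matching}. The boundary conditions are immediate: $G(0,t)=0$, and $\partial_sG(1,t)=0$ because $1>t$ puts $s=1$ on the constant branch. To justify the distributional identity~\eqref{eq:greens}, I will integrate by parts against a test function $\phi\in C_c^\infty(0,1)$:
\[
-\int_0^1 G(s,t)\phi''(s)\,\dd s=-\int_0^t s\,\phi''(s)\,\dd s-t\int_t^1\phi''(s)\,\dd s .
\]
Evaluating the first integral by parts and the second directly, the boundary contributions cancel and what remains is $\phi(t)$.

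Second, I will show that $G$ really is the kernel of the inverse, not just a function with the right local behaviour. For $g\in L^2[0,1]$, define
\[
u(s)=\int_0^1 (s\wedge t)\,g(t)\,\dd t=\int_0^s t\,g(t)\,\dd t+s\int_s^1 g(t)\,\dd t .
\]
Differentiating gives $u'(s)=\int_s^1 g$ and $u''=-g$ almost everywhere, so $u\in H^2$. Moreover $u(0)=0$ and $u'(1)=0$. Thus $u$ lies in the domain and $\mathcal{L}u=g$. For uniqueness, suppose $f$ is in the domain with $-f''=0$. Then $f$ is affine, $f(0)=0$ forces $f(s)=as$, and $f'(1)=0$ forces $a=0$. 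Hence the realization is injective and $\mathcal{L}^{-1}$ has kernel $G$.

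None of these steps is a genuine obstacle; the only point needing care is the distributional computation, which I will handle by the split integration by parts above. Admissibility can be recorded as a check, though it is not needed for the statement. For $f$ in the domain, $\langle f,\mathcal{L}f\rangle=\int_0^1 f'^2$, since the boundary terms in the energy identity vanish. This quantity is zero only if $f$ is constant, and then $f(0)=0$ gives $f=0$. This agrees with the Robin convention $\lambda_0=0$, $\lambda_1=\infty$.
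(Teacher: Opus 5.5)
Your proposal is correct, and its first part is the paper's proof: the same piecewise-affine sections with slopes $1$ and $0$, continuity and a unit downward kink at $t$, and boundary values read off the two branches. Your test-function computation makes explicit the paper's step from $\partial_sG=\mathbf{1}_{[0,t]}$ to $\partial_s^2G=-\delta(s-t)$. Your second part, showing that $u=\int_0^1 (s\wedge t)\,g(t)\,\dd t$ lies in $\{f\in H^2: f(0)=0,\ f'(1)=0\}$ with $-u''=g$ and that the realization is injective, is something the paper's proof leaves implicit; it is what actually certifies $G$ as the kernel of $\mathcal{L}^{-1}$ in the sense of the paper's definition of a Green's function.
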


\begin{proof}
Fix $t\in(0,1)$. For $s<t$, $G(s,t)=s$ and $\partial_s G(s, t) = 1$; for $s>t$, $G(s,t)=t$ and $\partial_s G(s,t) = 0$. Hence $\partial_s G(s,t) = \mathbf{1}_{[0,t]}(s)$ almost everywhere and $\partial_s^2 G(s,t) = -\delta(s-t)$ distributionally, proving~\eqref{eq:greens}. At the endpoints, $G(0,t)=0$ gives the Dirichlet condition, while $s=1>t$ gives $\partial_s G(s,t) = 0$, the Neumann condition. Continuity and the unit jump in slope give~\eqref{eq:matching}.
\end{proof}

Table~\ref{tab:roadmap} lists the processes by construction, with their defining conditions and the sections where they are derived. The derivations give the kernels; Table~\ref{tab:constructions} collects the field marks---spectrum, trace, triangularity and path construction---for comparison.

\begin{landscape}
\vspace*{\fill}
\begin{table}[H]
\centering
\caption{Roadmap of the constructions. The upper block lists boundary conditions defining~$\mathcal{L}$; the lower block lists constraints on a fixed Brownian motion~$W$. The periodic and Neumann--Neumann rows require both the corrected source~\eqref{eq:corrected} and the balanced condition $\int_0^1 G(s,t)\,\dd s=0$ in~\eqref{eq:zeromean}. The final row compares recentering with conditioning for the degenerate processes; these operations give different laws here, as they also do in the zero-time-average example.}\label{tab:roadmap}
\footnotesize
\setlength{\tabcolsep}{5pt}
\begin{tabular}{lll}
\toprule
Process & Condition & Section \\
\midrule
\multicolumn{3}{l}{\emph{Boundary conditions define the operator}} \\[4pt]
Brownian motion & $f(0)=0$, $f'(1)=0$ & \ref{sec:robin} \\[4pt]
Brownian bridge & $f(0)=f(1)=0$ & \ref{sec:robin} \\[4pt]
Time-reversed Brownian motion & $f'(0)=0$, $f(1)=0$ & \ref{sec:robin} \\[4pt]
Robin family & $f(0)=\lambda_0f'(0)$, $f(1)=-\lambda_1f'(1)$ & \ref{sec:robin} \\[4pt]
Random slope & $f(0)=0$, $f(1)=(1+\sigma^{-2})f'(1)$ & \ref{sec:negative_branch} \\[4pt]
$\alpha$-family & $f(0)=\alpha f(1)$, $f'(0)=f'(1)/\alpha$ & \ref{sec:nonlocal} \\[4pt]
Anti-periodic Brownian motion & $\alpha=-1$ & \ref{sec:nonlocal} \\[4pt]
Slepian Brownian motion & $f'(0)+f'(1)=0$, $f(0)+f(1)=f'(0)$ & \ref{sec:nonlocal} \\[4pt]
Periodic (circular) Brownian motion & $\alpha=1$, $\int_0^1 G(s,t)\,\dd s=0$ & \ref{sec:degenerate} \\[4pt]
Neumann--Neumann Brownian motion & $f'(0)=f'(1)=0$, $\int_0^1 G(s,t)\,\dd s=0$ & \ref{sec:degenerate} \\[8pt]
\multicolumn{3}{l}{\emph{Constraints condition a fixed process}} \\[4pt]
Brownian bridge & $W_1=0$ & \ref{sec:conditioning} \\[4pt]
Pinned at $\tau$ & $W_\tau=0$ & \ref{sec:conditioning} \\[4pt]
Tied times & $W_{s_1}-W_{s_0}=0$ & \ref{sec:conditioning} \\[4pt]
Weighted endpoints & $\alpha W_{s_0}+\beta W_{s_1}=0$ & \ref{sec:conditioning} \\[4pt]
Integral-conditioned Brownian motion & $\int_0^1 W_u\,\dd u=0$ & \ref{sec:conditioning} \\[4pt]
\emph{vanishing covariance effect} & Zero finite differences of fixed positive order as spacing vanishes (Corollary~\ref{cor:invisible}) & \ref{sec:detectability} \\[8pt]
Degenerate, pinned & $f(\tau)=0$ (double-centering) vs.\ $X_\tau=0$ (rank-one conditioning): they disagree & \ref{sec:interior_pin} \\
\bottomrule
\end{tabular}
\end{table}
\vspace*{\fill}
\end{landscape}

\subsection{Recentering and conditioning}\label{sec:recentring_conditioning}

Recentering and conditioning, operations 2 and 3 of Section~\ref{sec:two_mechanisms}, begin with a specified covariance rather than boundary conditions. Their covariance identities will be used in Sections~\ref{sec:robin}--\ref{sec:degenerate}; further conditioning examples appear in Section~\ref{sec:conditioning}.

\emph{Recentering} subtracts a functional of the path outright. If $\ell$ is a linear functional of~$X$ and we form $C_s = X_s-\ell(X)$, bilinearity gives
\begin{equation}\label{eq:recentring}
 \Cov(C_s,C_t) = K(s,t)-K_\ell(s)-K_\ell(t)+V_\ell ,
\end{equation}
where $K_\ell$ and $V_\ell$ are defined in~\eqref{eq:functional_cov}. Since $C$ is a linear image of~$X$, \eqref{eq:recentring} is a covariance whenever $K$ is; no conditioning or operator construction is needed.

\emph{Conditioning} subtracts instead the Gaussian regression of the path on the observed functional.

\begin{theorem}[Rank-one conditioning]\label{thm:conditioning}
Let $X$ be zero-mean Gaussian with covariance~$K$, let $\ell$ be a linear functional whose variance $V_\ell = \Var(\ell(X))$ lies in $(0,\infty)$, and write $K_\ell(s) = \Cov(X_s,\ell(X))$ as in~\eqref{eq:functional_cov}. The covariance of~$X$ conditioned on $\ell(X)=0$ is
\begin{equation}\label{eq:conditioning}
 G_0(s,t) = K(s,t) - \frac{K_\ell(s)\,K_\ell(t)}{V_\ell}.
\end{equation}
\end{theorem}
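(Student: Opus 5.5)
The plan is to construct an explicit Gaussian decomposition of the process into a part proportional to the observation and a part independent of it. I will avoid invoking general conditional-density formulas, which are awkward for infinite-dimensional processes.

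Set $Y=\ell(X)$. By the discussion following~\eqref{eq:functional_cov}, $Y$ lies in the closed linear span of $\{X_s\}$ in $L^2(\mathbb{P})$. Hence $(X_{s_1},\dots,X_{s_k},Y)$ is jointly Gaussian with mean zero for every finite set of times. For each $s$ define the residual
\[
 R_s = X_s - \frac{K_\ell(s)}{V_\ell}\,Y ,
\]
which is well defined because $0<V_\ell<\infty$.

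The first step is a covariance computation: $\Cov(R_s,Y)=K_\ell(s)-K_\ell(s)V_\ell/V_\ell=0$. Joint Gaussianity of every finite family $(R_{s_1},\dots,R_{s_k},Y)$ then upgrades zero covariance to independence of the whole process $R$ from $Y$, since finite-dimensional distributions determine the law. The decomposition $X_s=R_s+(K_\ell(s)/V_\ell)\,Y$ therefore shows that, conditionally on $Y=y$, the process $X$ has the law of $R+y\,K_\ell/V_\ell$. Taking $y=0$, the conditional law is that of $R$. Bilinearity then gives
\[
 \Cov(R_s,R_t)=K(s,t)-2\frac{K_\ell(s)K_\ell(t)}{V_\ell}+\frac{K_\ell(s)K_\ell(t)}{V_\ell^2}V_\ell ,
\]
which is~\eqref{eq:conditioning}.

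The only delicate point is what "conditioned on $\ell(X)=0$" means, since this is a null event. I would interpret it through the regular conditional distribution given $Y$, and the independence decomposition provides a version of it that is continuous in $y$. This justifies evaluating at $y=0$ and matches the regression description in Section~\ref{sec:two_mechanisms}. Finally, positive semidefiniteness of $G_0$ needs no separate argument, because $G_0$ is the covariance of the genuine process $R$.
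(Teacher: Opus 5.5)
Your argument is correct and is essentially the paper's route: the paper only cites the standard Gaussian conditioning formula (a rank-one Schur complement), describing it as subtracting from each $X_s$ its least-squares regression on $\ell(X)$. Your independent-residual decomposition $X_s=R_s+\bigl(K_\ell(s)/V_\ell\bigr)\ell(X)$ is the standard proof of that formula, and it is more careful than the paper about the null event $\{\ell(X)=0\}$ and about the joint-Gaussianity hypothesis, which the paper makes explicit only later through the closed-span condition (Remark~\ref{rem:detect_scope}).
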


This is the standard Gaussian conditioning formula, a rank-one Schur complement. In~\eqref{eq:recentring}, each path value loses the observation itself; in~\eqref{eq:conditioning}, it loses its least-squares projection onto that observation. The resulting covariances generally differ, as Section~\ref{sec:bc_vs_constraints} shows. Section~\ref{sec:conditioning} develops the conditioning examples, while Section~\ref{sec:degenerate} uses recentering to construct covariances associated with degenerate realizations.

\section{Local boundary conditions: the Robin family}\label{sec:robin}

\subsection{The two-parameter family}

Robin conditions couple a function and its derivative at the same endpoint. We write
\begin{equation}\label{eq:robin}
 f(0) = \lambda_0\,f'(0), \qquad f(1) = -\lambda_1\,f'(1), \qquad \lambda_0,\lambda_1 \in [0,\infty],
\end{equation}
with signs chosen so that $\lambda_i=0$ gives a Dirichlet condition (pinned end) and $\lambda_i\to\infty$ gives a Neumann condition (free end). Positive finite~$\lambda_i$ give an elastic endpoint penalty of stiffness~$1/\lambda_i$, interpolating between these limits.

\subsection{The Green's function}

\begin{theorem}[Robin Green's function]\label{thm:robin}
For $\lambda_0,\lambda_1\in[0,\infty)$ the Green's function of $(\mathcal{L},\mathcal{B})$, with boundary conditions~$\mathcal{B}$ given by~\eqref{eq:robin}, is
\begin{equation}\label{eq:robin_green}
 G(s,t) = \frac{(\lambda_0 + s\wedge t)(\lambda_1 + 1 - s\vee t)}{1+\lambda_0+\lambda_1},
\end{equation}
where $s\vee t = \max(s,t)$. Figure~\ref{fig:robin_family} shows how the parameters shape the covariance.
\end{theorem}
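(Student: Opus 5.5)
The plan is a direct verification in the style of Proposition~\ref{prop:BM}. We check that the kernel~\eqref{eq:robin_green} satisfies the defining properties of the Green's function from Section~\ref{sec:green}: piecewise affine in~$s$ off the diagonal, the matching conditions~\eqref{eq:matching}, and the two Robin conditions~\eqref{eq:robin}. We then argue that these properties determine~$G$ uniquely, because the realization has trivial kernel.

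Write $D = 1+\lambda_0+\lambda_1$, which is positive for $\lambda_0,\lambda_1\in[0,\infty)$, and fix $t\in(0,1)$.

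\emph{Piecewise form.} For $s<t$ we have $G(s,t) = (\lambda_0+s)(\lambda_1+1-t)/D$. For $s>t$ we have $G(s,t) = (\lambda_0+t)(\lambda_1+1-s)/D$. Both pieces are affine in~$s$, and both equal $(\lambda_0+t)(\lambda_1+1-t)/D$ at $s=t$, which gives continuity. The slopes are $(\lambda_1+1-t)/D$ on the left and $-(\lambda_0+t)/D$ on the right. Their difference is
\[
\partial_s G(t^+,t)-\partial_s G(t^-,t) = -\frac{\lambda_0+t+\lambda_1+1-t}{D} = -1,
\]
which is the unit downward kink. This is where the normalization by~$D$ is forced.

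\emph{Boundary conditions.} At $s=0$ we use the left piece. There $G(0,t)=\lambda_0(\lambda_1+1-t)/D$, and this equals $\lambda_0\,\partial_s G(0,t)$. At $s=1$ we use the right piece. There $G(1,t)=\lambda_1(\lambda_0+t)/D$, and this equals $-\lambda_1\,\partial_s G(1,t)$. Both Robin conditions hold, including the Dirichlet cases $\lambda_i=0$. Since $\mathcal{L}_sG=0$ away from~$t$, and the kink gives exactly $\delta(s-t)$, equation~\eqref{eq:greens} follows distributionally.

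\emph{Uniqueness and identification as the kernel of $\mathcal{L}^{-1}$.} The main point to check is that $(\mathcal{L},\mathcal{B})$ is invertible, so that the Green's function is well defined and equal to the kernel above. Any element of the null space is affine, $f(s)=a+bs$. The conditions give $a=\lambda_0 b$ and $a+b=-\lambda_1 b$, so $b(1+\lambda_0+\lambda_1)=0$. Hence $b=0$ and then $a=0$. The difference of two candidate kernels would be such an affine null solution, so the candidate kernel is unique.

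It remains to show that this kernel inverts~$\mathcal{L}$. For $g\in L^2$, set $u(s)=\int_0^1 G(s,t)g(t)\,\dd t$. Differentiating the piecewise formula twice, or citing the standard Sturm--Liouville construction, gives $-u''=g$ together with the boundary conditions. Therefore $u=\mathcal{L}^{-1}g$.

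Admissibility, meaning self-adjointness and positivity, is not needed for the identification itself. It follows from the energy identity~\eqref{eq:energy} for finite positive~$\lambda_i$, and from its Dirichlet version when some $\lambda_i=0$. The symmetry of~\eqref{eq:robin_green} is visible directly from the formula.
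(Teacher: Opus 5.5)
Your proof is correct and uses the same ingredients as the paper's. The paper writes the same piecewise-affine ansatz and solves the four linear conditions (continuity, unit kink, two Robin conditions) for the coefficients; you verify that the stated formula satisfies them, so the argument runs in the opposite direction. Your separate null-space argument makes explicit the uniqueness the paper obtains implicitly when it divides by $1+\lambda_0+\lambda_1$ to solve for $b_2$, which is the same factor as in your $b(1+\lambda_0+\lambda_1)=0$.
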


\begin{proof}
Fix $t\in(0,1)$ and write $G(s,t) = a_1 + b_1 s$ for $s<t$ and $G(s,t)=a_2+b_2 s$ for $s>t$, with coefficients depending on~$t$. The matching conditions~\eqref{eq:matching} give
\[
 a_1 + b_1 t = a_2 + b_2 t, \qquad b_2 - b_1 = -1.
\]
At $s=0$, the Robin condition gives $a_1 = \lambda_0 b_1$; at $s=1$, it gives $a_2+b_2 = -\lambda_1 b_2$, or $a_2 = -(1+\lambda_1)b_2$. Substituting these relations from~\eqref{eq:robin} and $b_1 = b_2+1$ into the continuity equation yields
\[
 \lambda_0(b_2+1)+(b_2+1)t = -(1+\lambda_1)b_2 + b_2 t \implies b_2\,(1+\lambda_0+\lambda_1) = -(\lambda_0+t).
\]
Hence
\[
 b_2 = -\frac{\lambda_0+t}{1+\lambda_0+\lambda_1},\quad b_1 = \frac{\lambda_1+1-t}{1+\lambda_0+\lambda_1},\quad a_1 = \frac{\lambda_0(\lambda_1+1-t)}{1+\lambda_0+\lambda_1},\quad a_2 = \frac{(\lambda_0+t)(1+\lambda_1)}{1+\lambda_0+\lambda_1}.
\]
For $s<t$, this gives $G = (\lambda_0+s)(\lambda_1+1-t)/(1+\lambda_0+\lambda_1)$. For $s>t$, the coefficients give the same expression with $s$ and~$t$ exchanged. Combining the two cases with $s\wedge t$ and $s\vee t$ yields~\eqref{eq:robin_green}.
\end{proof}

For nonnegative finite parameters, the Robin kernel also has a construction from Brownian motion and two independent Gaussian variables. Here $\lambda_0$ is a prior variance at the left endpoint and $\lambda_1$ an observation-noise variance at the right, giving a probabilistic interpretation of the penalties in~\eqref{eq:energy}.

\begin{proposition}[Noisy-endpoint construction of the Robin family]\label{prop:robin_conditioning}
Let $W$ be standard Brownian motion and let $Z,Z'$ be standard normal variables independent of~$W$ and of each other. For $\lambda_0,\lambda_1\in[0,\infty)$ put
\begin{equation}\label{eq:robin_conditioning}
 Y_s = \sqrt{\lambda_0}\,Z + W_s , \qquad\text{and condition on}\qquad Y_1 + \sqrt{\lambda_1}\,Z' = 0 .
\end{equation}
The resulting process has covariance $G(s,t)$ as in~\eqref{eq:robin_green}.
\end{proposition}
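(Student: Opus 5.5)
The plan is to apply Theorem~\ref{thm:conditioning} directly, taking $X=Y$ and $\ell$ to be the observation $Y_1+\sqrt{\lambda_1}\,Z'$. Strictly, this observation is not a functional of $Y$ alone. I will therefore enlarge the Gaussian family to the jointly Gaussian collection $\{Y_s\}\cup\{Z'\}$; the rank-one Schur complement argument behind~\eqref{eq:conditioning} applies verbatim to any Gaussian variable $\ell$ in that span. Its only inputs are $K(s,t)=\Cov(Y_s,Y_t)$, $K_\ell(s)=\Cov(Y_s,\ell)$ and $V_\ell=\Var(\ell)$.

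The covariances follow from independence of $Z$, $Z'$ and $W$:
\[
 K(s,t)=\lambda_0+s\wedge t,\qquad K_\ell(s)=\Cov\bigl(\sqrt{\lambda_0}Z+W_s,\ \sqrt{\lambda_0}Z+W_1+\sqrt{\lambda_1}Z'\bigr)=\lambda_0+s,
\]
using $s\wedge 1=s$. Similarly,
\[
 V_\ell=\lambda_0+1+\lambda_1 .
\]
This variance lies in $(0,\infty)$ for all $\lambda_0,\lambda_1\in[0,\infty)$, so the hypothesis of Theorem~\ref{thm:conditioning} holds, including the Brownian-motion corner $\lambda_0=\lambda_1=0$.

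Substituting into~\eqref{eq:conditioning}, and writing $D=1+\lambda_0+\lambda_1$, gives
\[
 G_0(s,t)=\lambda_0+s\wedge t-\frac{(\lambda_0+s)(\lambda_0+t)}{D}.
\]
The remaining step is algebraic. By symmetry, assume $s\le t$, so that $s\wedge t=s$ and $s\vee t=t$. Then
\[
 D\,G_0=(\lambda_0+s)D-(\lambda_0+s)(\lambda_0+t)=(\lambda_0+s)\bigl(1+\lambda_1-t\bigr).
\]
This matches the numerator of~\eqref{eq:robin_green}, and the case $s>t$ follows by exchanging the roles of $s$ and $t$.

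I expect no step to be a genuine obstacle. The one point needing care is the justification that conditioning on a variable involving the extra noise $Z'$ is covered by Theorem~\ref{thm:conditioning}. I will handle it by noting that the theorem's proof uses only joint Gaussianity and the three covariance quantities above, so it applies to $\ell$ in the enlarged Gaussian space. Equivalently, one can apply the theorem to the augmented process indexed by $[0,1]\cup\{\ast\}$ with $X_\ast=Z'$, and then restrict the conditioned covariance to $[0,1]$.
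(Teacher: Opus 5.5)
Your proposal is correct and is essentially the paper's proof: you feed the same prior covariance $\lambda_0+s\wedge t$, cross-covariance $\lambda_0+s$ and variance $1+\lambda_0+\lambda_1$ into the rank-one formula~\eqref{eq:conditioning}, and your augmented-index remark makes explicit the joint-Gaussian extension that the paper uses implicitly. One small slip: $\lambda_0=\lambda_1=0$ is the Brownian-bridge corner (conditioning $W$ on $W_1=0$), not the Brownian-motion corner.
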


\begin{proof}
The prior~$Y$ has $\Cov(Y_s,Y_t)=\lambda_0+s\wedge t$. The noisy observation $Y_1+\sqrt{\lambda_1}Z'$ is jointly Gaussian with~$Y$, with cross-covariance $K_\ell(s)=\lambda_0+s$ and variance $V_\ell = \lambda_0+1+\lambda_1$. Applying the same Gaussian conditioning identity as in~\eqref{eq:conditioning} gives $\lambda_0+s\wedge t - (\lambda_0+s)(\lambda_0+t)/(1+\lambda_0+\lambda_1)$, which simplifies to~\eqref{eq:robin_green}.
\end{proof}

Thus $\lambda_0$ is a prior variance at the left endpoint and $\lambda_1$ an observation-noise variance at the right. For positive parameters, their precisions $1/\lambda_0$ and $1/\lambda_1$ match the penalties in~\eqref{eq:energy}. At $\lambda_0=0$, the left value is known exactly and the prior is Brownian motion from zero. If also $\lambda_1=0$, the noiseless observation $Y_1=0$ gives the Brownian bridge. As $\lambda_1\to\infty$ instead, the observation becomes uninformative, leaving a free right end. The general construction uses the auxiliary variables $Z,Z'$ rather than conditioning $W$ alone. Operations~1 and~3 therefore reach the same nonnegative-parameter family by different routes: boundary conditions define it, and noisy-endpoint conditioning constructs it.

\begin{figure}[tbp]
 \centering
 \includegraphics[width=\textwidth]{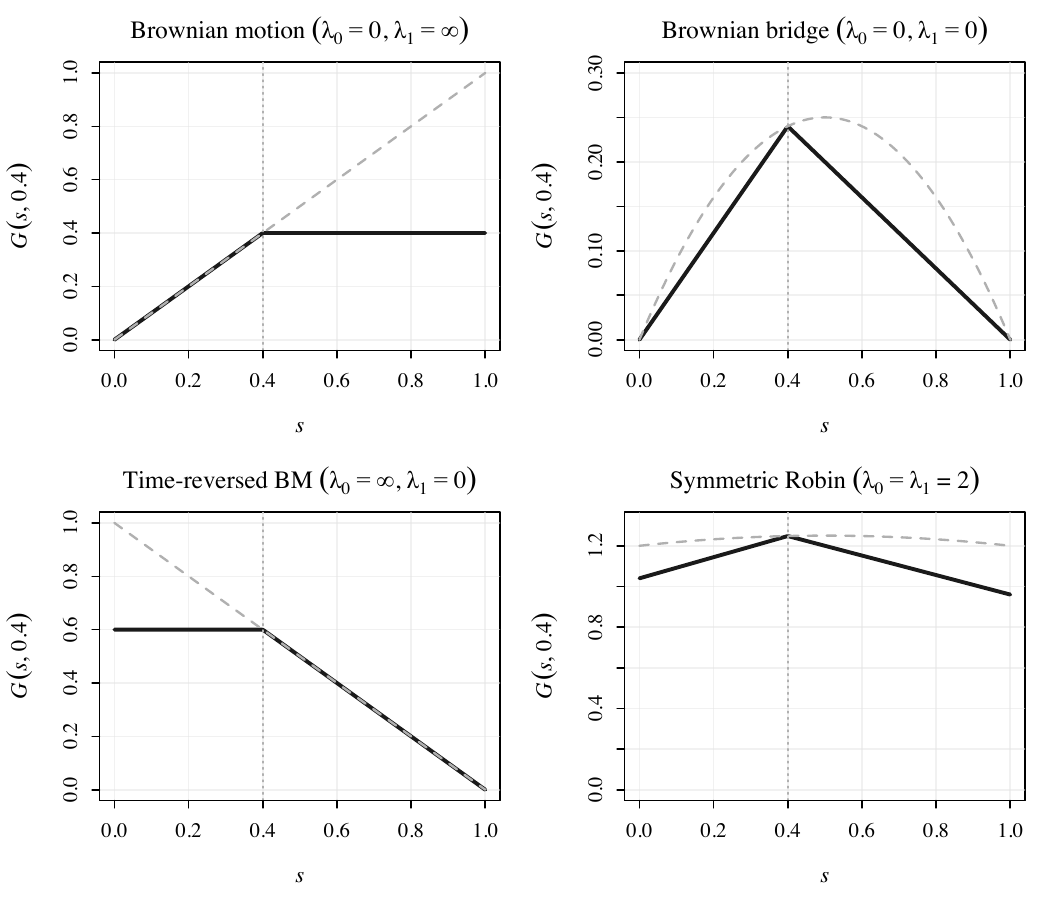}
 \caption{Robin covariance sections. Solid curves show $G(s,t)$ for fixed $t=0.4$; dashed curves show the variance $G(s,s)$, and the vertical line marks $s=t=0.4$. The selected values of $\lambda_0$ and $\lambda_1$ illustrate the family connecting Brownian motion, the Brownian bridge and time-reversed Brownian motion at the nondegenerate corners of $[0,\infty]^2$.}
 \label{fig:robin_family}
\end{figure}

\subsection{The corners of the family}\label{sec:corners}

The four corners of $(\lambda_0,\lambda_1)\in[0,\infty]^2$ have each $\lambda_i$ equal to $0$ or~$\infty$, with infinite values understood as limits of~\eqref{eq:robin_green}. Three give familiar nondegenerate processes: $(0,\infty)$ is Brownian motion, $(0,0)$ the Brownian bridge, and $(\infty,0)$ time-reversed Brownian motion. The fourth corner, $(\infty,\infty)$, is the degenerate Neumann--Neumann case, treated separately in Section~\ref{sec:degenerate}.

\begin{example}[Brownian motion: $\lambda_0=0$, $\lambda_1\to\infty$]\label{ex:BM}
The limiting boundary conditions are
\begin{align*}
 G(0,t) &= 0 \qquad \text{(Dirichlet at } s=0\text{)}, \\
 \partial_s G(1,t) &= 0 \qquad \text{(Neumann at } s=1\text{)}.
\end{align*}
The corresponding kernel is
\[
 G(s,t) = \lim_{\lambda_1\to\infty} \frac{(s\wedge t)(\lambda_1+1-s\vee t)}{1+\lambda_1} = s\wedge t,
\]
recovering Proposition~\ref{prop:BM}. The process is pinned at~0 and free at~1, with $\Var(X_s)=s$. Figure~\ref{fig:detail_bm} shows its covariance surface and sample paths.
\end{example}

\begin{figure}[tbp]
 \centering
 \includegraphics[width=0.85\textwidth]{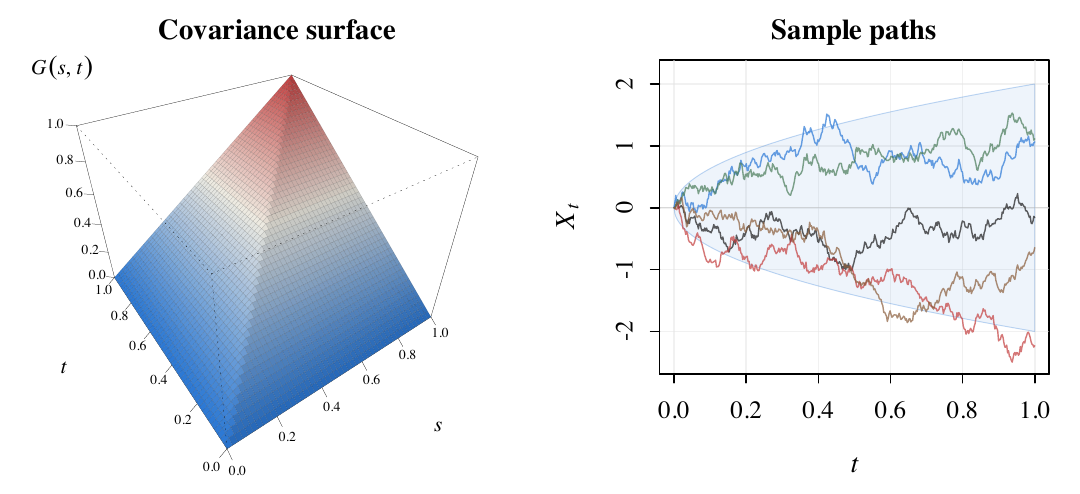}
 \caption{Brownian motion: covariance surface $G(s,t)$ over $[0,1]^2$ (left) and five sample paths with a pointwise $\pm2$-standard-deviation envelope (right).}
 \label{fig:detail_bm}
\end{figure}

\needspace{5\baselineskip}
\begin{example}[Brownian bridge: $\lambda_0=\lambda_1=0$]\label{ex:bridge}
The boundary conditions are
\begin{align*}
 G(0,t) &= 0 \qquad \text{(Dirichlet at } s=0\text{)}, \\
 G(1,t) &= 0 \qquad \text{(Dirichlet at } s=1\text{)}.
\end{align*}
The corresponding kernel is
\[
 G(s,t) = (s\wedge t)(1-s\vee t) = s\wedge t - st,
\]
with both endpoints pinned and variance $\Var(X_s) = s(1-s)$. The same law is obtained by conditioning Brownian motion on $W_1=0$ (Example~\ref{ex:bridge_cond}). Figure~\ref{fig:detail_bridge} shows the covariance surface and sample paths.
\end{example}

\begin{figure}[tbp]
 \centering
 \includegraphics[width=0.85\textwidth]{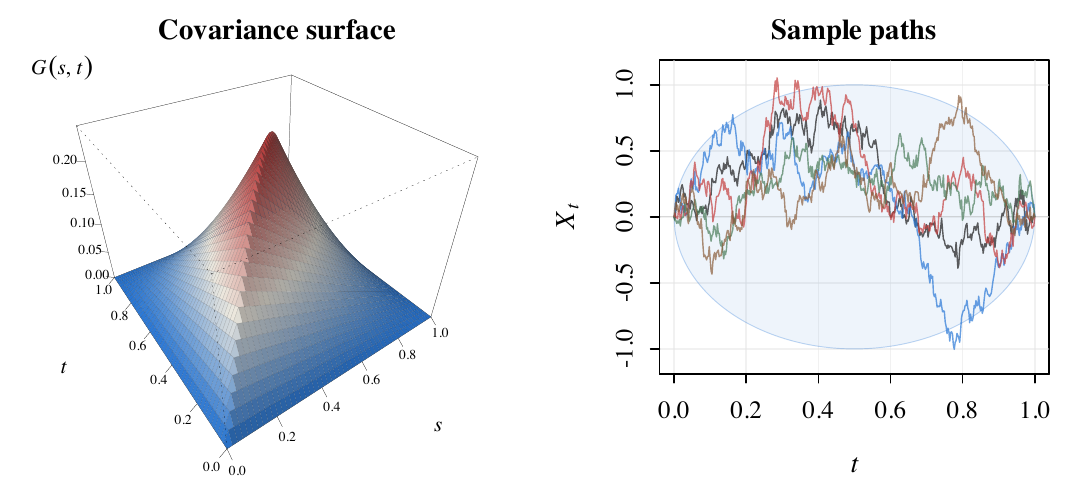}
 \caption{Brownian bridge: covariance surface $G(s,t)$ over $[0,1]^2$ (left) and five sample paths with a pointwise $\pm2$-standard-deviation envelope (right).}
 \label{fig:detail_bridge}
\end{figure}

\needspace{5\baselineskip}
\begin{example}[Time-reversed Brownian motion: $\lambda_0\!\to\!\infty$, $\lambda_1\!=\!0$]\label{ex:time_reversed}
The limiting boundary conditions are
\begin{align*}
 \partial_s G(0,t) &= 0 \qquad \text{(Neumann at } s=0\text{)}, \\
 G(1,t) &= 0 \qquad \text{(Dirichlet at } s=1\text{)}.
\end{align*}
The corresponding kernel is
\[
 G(s,t) = \lim_{\lambda_0\to\infty}\frac{(\lambda_0+s\wedge t)(1-s\vee t)}{1+\lambda_0} = 1 - s\vee t,
\]
with a free left end, a pinned right end and $\Var(X_s) = 1-s$. This is standard Brownian motion run backwards: under $s\mapsto 1-s$, $\Cov(W_{1-s},W_{1-t}) = (1-s)\wedge(1-t) = 1-(s\vee t)$. Figure~\ref{fig:detail_time_reversed} shows the covariance surface and sample paths.
\end{example}

\begin{figure}[tbp]
 \centering
 \includegraphics[width=0.85\textwidth]{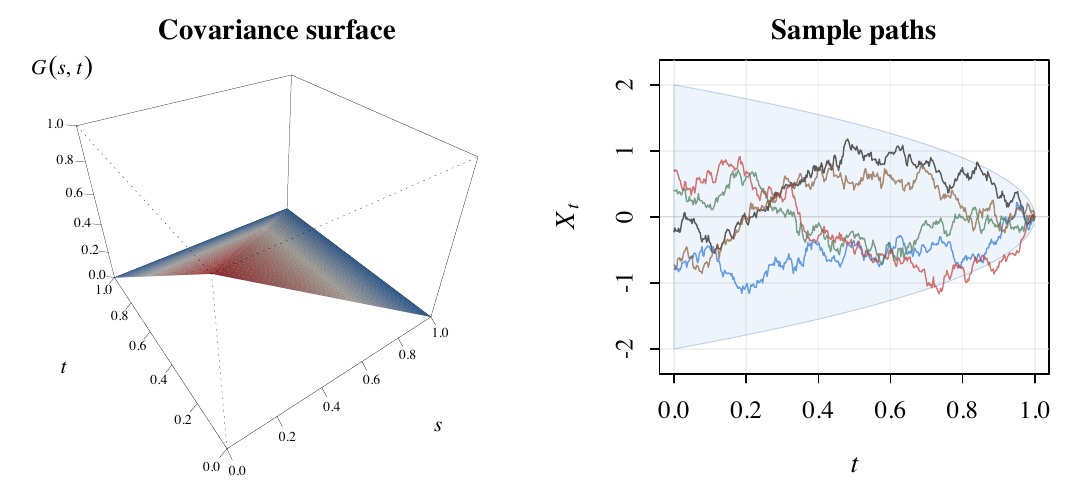}
 \caption{Time-reversed Brownian motion: covariance surface $G(s,t)$ over $[0,1]^2$ (left) and five sample paths with a pointwise $\pm2$-standard-deviation envelope (right).}
 \label{fig:detail_time_reversed}
\end{figure}

\begin{remark}[Endpoint correlations as coordinates]\label{rem:tau_correlation}
Evaluating~\eqref{eq:robin_green} at $s=t=0$, $s=t=1$ and $(s,t)=(0,1)$ gives the endpoint variances and covariance:
\[
 \Var(X_0)=\frac{\lambda_0(1+\lambda_1)}{1+\lambda_0+\lambda_1}, \qquad
 \Var(X_1)=\frac{(1+\lambda_0)\lambda_1}{1+\lambda_0+\lambda_1}, \qquad
 \Cov(X_0,X_1)=\frac{\lambda_0\lambda_1}{1+\lambda_0+\lambda_1} ,
\]
so that for $\lambda_0,\lambda_1>0$ the common denominator cancels and
\[
 \Corr(X_0,X_1) = \sqrt{\frac{\lambda_0}{1+\lambda_0}\cdot\frac{\lambda_1}{1+\lambda_1}} = \sqrt{\tau_0\tau_1} , \qquad \tau_i = \frac{\lambda_i}{1+\lambda_i} .
\]
The $\tau_i$ give alternative coordinates for the family. Substituting $\lambda_i = \tau_i/(1-\tau_i)$ into~\eqref{eq:robin_green} yields
\[
 G(s,t) = \frac{\bigl[\tau_0+(1-\tau_0)(s\wedge t)\bigr]\bigl[\tau_1+(1-\tau_1)(1-s\vee t)\bigr]}{1-\tau_0\tau_1} .
\]
These coordinates place the Neumann limit $\lambda_i=\infty$ on the finite edge $\tau_i = 1$ of the unit square. Each end contributes one factor $\tau_i$ to the squared endpoint correlation. On the symmetric ray $\tau=\tau_0=\tau_1$, we have $\Corr(X_0,X_1)=\tau$ wherever the correlation is defined: the coordinate is the endpoint correlation itself.

Approaching a Dirichlet edge from positive parameters, $\tau_i\to0$ gives $\Corr(X_0,X_1)\to0$. At the edge itself, one endpoint is deterministic, so its correlation with the other is undefined. Each of the three nondegenerate corner processes has at least one such endpoint. Approaching the fourth corner, $\tau_0,\tau_1\rightarrow1$ gives $\Corr(X_0,X_1)\to1$; Section~\ref{sec:NN_obstruction} examines this degenerate limit.
\end{remark}

\subsection{Outside the quadrant: negative Robin parameters}\label{sec:negative_branch}

The nonnegative quadrant $[0,\infty]^2$ of Theorem~\ref{thm:robin} is not the full admissible region. Formula~\eqref{eq:robin_green} also gives symmetric kernels for negative~$\lambda_i$ wherever its denominator is nonzero; the remaining question is positivity. Nonnegative endpoint stiffness, $1/\lambda_i\ge0$, is a modeling choice. These stiffnesses are coefficients in the energy form~\eqref{eq:energy}, not marginal endpoint precisions $1/\Var(X_i)$, which depend on both parameters (Remark~\ref{rem:tau_correlation}). A further admissible region lies beyond the quadrant.

To test positivity, write $a=1/\lambda_0$ and $b=1/\lambda_1$ for the boundary stiffnesses in~\eqref{eq:energy}, so that $\langle f,\mathcal{L}f\rangle = \int_0^1 f'(s)^2\,\dd s + a\,f(0)^2 + b\,f(1)^2$. For finite stiffnesses, minimize the associated energy form over $H^1[0,1]$ at fixed endpoint values $f(0)=p$, $f(1)=q$. The minimum derivative energy is $\int_0^1 f'(s)^2\,\dd s = (q-p)^2$, attained by the affine interpolant. This minimization takes place in the form domain, not $\operatorname{dom}(\mathcal{L})$. Strict positivity of the realization is therefore equivalent to positivity of the reduced quadratic form
\[
 Q(p,q) = (1+a)\,p^2 - 2pq + (1+b)\,q^2
\]
for every $(p,q)\ne(0,0)$. Evaluating at $(1,0)$ and $(0,1)$ requires $1+a>0$ and $1+b>0$. With $1+b>0$, completing the square in~$q$ gives
\[
 Q(p,q) = (1+b)\left(q-\frac{p}{1+b}\right)^2 + \left[(1+a)-\frac{1}{1+b}\right]p^2 ,
\]
so for fixed $p\ne0$, the minimum over~$q$ is positive exactly when the bracketed coefficient of~$p^2$ is positive, or $(1+a)(1+b)>1$. The positivity criterion can therefore be written as
\begin{equation}\label{eq:robin_admissible}
 1+a>0 \qquad\text{and}\qquad (1+a)(1+b)-1>0 .
\end{equation}

For finite nonzero parameters, the first condition in~\eqref{eq:robin_admissible} becomes $1+a=1+1/\lambda_0>0$, which holds exactly when $\lambda_0>0$ or $\lambda_0<-1$. Thus $\lambda_0\in[-1,0)$ is excluded, as is $\lambda_1\in[-1,0)$ by symmetry. The second condition becomes
\[
 (1+a)(1+b)-1 = \frac{1+\lambda_0+\lambda_1}{\lambda_0\lambda_1} > 0 .
\]
For $\lambda_0>0$, the admissible ranges are $\lambda_1\ge0$ and $\lambda_1<-(1+\lambda_0)$, with the Dirichlet edge checked directly from~\eqref{eq:robin_green}. The same alternatives hold at $\lambda_0=0$. For $\lambda_0<-1$, the admissible window is $0\le\lambda_1<-(1+\lambda_0)$; its width tends to zero as $\lambda_0\to-1^-$ and grows without bound as $\lambda_0\to-\infty$. The two branches beyond the quadrant are mirror images, each with one nonnegative parameter; two negative parameters are inadmissible. For $\lambda_0\ge0$, the inadmissible band $-(1+\lambda_0)<\lambda_1<0$ lies between the quadrant and the singular wall $1+\lambda_0+\lambda_1=0$, where the denominator of~\eqref{eq:robin_green} vanishes. Nonnegative endpoint stiffness is therefore a modeling choice, not a description of the full admissible set.

The next example lies beyond the wall: Brownian motion with an independent random slope.

\begin{example}[Brownian motion with a random slope]\label{ex:random_slope}
Take $\lambda_0=0$ (Dirichlet at the left end) and $\lambda_1 = -1-1/\sigma^2$ for some $\sigma>0$. Since $\lambda_1<-1$, this lies on the admissible Dirichlet edge identified above from~\eqref{eq:robin_admissible} and its limiting cases. Now $1+\lambda_0+\lambda_1 = -1/\sigma^2$, so~\eqref{eq:robin_green} gives
\begin{equation}\label{eq:random_slope}
 G(s,t) = \frac{(s\wedge t)\bigl(-\sigma^{-2}-s\vee t\bigr)}{-\sigma^{-2}} = (s\wedge t)\bigl(1+\sigma^2\,s\vee t\bigr) = s\wedge t + \sigma^2 st ,
\end{equation}
which is the covariance of
\[
 X_s = W_s + \sigma\,s\,Z, \qquad Z\sim N(0,1)\ \text{independent of Brownian motion}\ W.
\]
This is Brownian motion with an independent random slope, still pinned at $s=0$. As $\sigma\to0$, we have $\lambda_1\to-\infty$ and recover Brownian motion. The two limits $\lambda_1=+\infty$ and $\lambda_1=-\infty$ therefore meet at the same Neumann case. Joining the ends of the real line turns $\mathbb{R}\cup\{\infty\}$ into a circle. With $\lambda_0=0$, the admissible set $\lambda_1\ge0$ or $\lambda_1<-1$ is a single arc: it runs from Dirichlet at $\lambda_1=0$, through Neumann at $\lambda_1=\infty$, and towards the singular wall at $\lambda_1=-1$. The quadrant's edge $\lambda_1\in[0,\infty]$ covers only the first part of this arc. As $\lambda_1\rightarrow-1^ -$, the slope variance $\sigma^2$ diverges.

Its field marks appear in Table~\ref{tab:constructions}. The triangular kernel in~\eqref{eq:random_slope} makes the process Gauss--Markov, like the rest of the local family. The precision eigenvalues are $\omega^2$ for the positive roots of $\tan\omega=(1+\sigma^{-2})\omega$, while the covariance trace is $\tfrac12+\tfrac13\sigma^2$. The trace diverges with the slope variance as the wall is approached. Figure~\ref{fig:detail_random_slope} shows the covariance and sample paths.
\end{example}

\begin{figure}[tbp]
 \centering
 \includegraphics[width=0.85\textwidth]{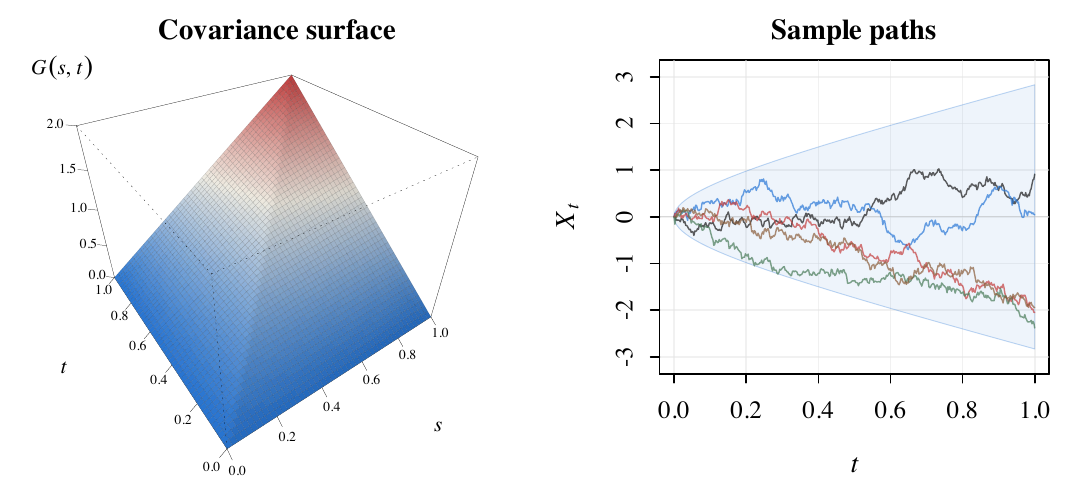}
 \caption{Brownian motion with a random slope at $\sigma=1$: covariance surface $G(s,t)=s\wedge t+\sigma^2st$ over $[0,1]^2$ (left) and five sample paths with a pointwise $\pm2$-standard-deviation envelope (right). The random linear trend adds a quadratic term to the variance, while the envelope has Brownian $\sqrt{s}$ behavior near the origin. The process starts at $0$ almost surely.}
 \label{fig:detail_random_slope}
\end{figure}

Returning to nonnegative parameters, the diagonal $\lambda_0=\lambda_1=\lambda$ gives
\begin{equation}\label{eq:symmetric_robin}
 G(s,t) = \frac{(\lambda+s\wedge t)(\lambda+1-s\vee t)}{1+2\lambda}, \qquad \Var(X_s) = \frac{(\lambda+s)(\lambda+1-s)}{1+2\lambda}.
\end{equation}
At $\lambda=0$ this is the bridge. As $\lambda\to\infty$, both endpoint penalties disappear and the covariance diverges, producing the Neumann--Neumann obstruction analyzed in Section~\ref{sec:NN_obstruction}.

For positive finite~$\lambda$, the endpoint penalties have finite stiffness. The term \emph{elastic} comes from Feller's boundary classification for one-dimensional diffusions, where $f(0)=\lambda f'(0)$ interpolates between absorption at $\lambda=0$ and reflection at $\lambda=\infty$; see \cite{ItoMcKean1974} for the pathwise description and \cite{Karatzas1991} for the analytic one. Here the same condition defines a Gaussian covariance, not the killed sub-Markov diffusion in that classification. In~\eqref{eq:energy}, the endpoint terms $\lambda^{-1}f(0)^2$ and $\lambda^{-1}f(1)^2$ supplement the roughness penalty $\int_0^1 f'(s)^2\,\dd s$. Large~$\lambda$ weakens the endpoint penalties, and small~$\lambda$ holds the endpoints near zero. The noisy-endpoint construction~\eqref{eq:robin_conditioning} gives the same interpolation through conditioning.

\subsection{The Neumann--Neumann obstruction}\label{sec:NN_obstruction}

Formula~\eqref{eq:robin_green} has no finite limit when $\lambda_0,\lambda_1\to\infty$ together. The covariance diverges because the limiting operator has a constant null mode.

\paragraph{Divergence of the covariance.}
Along the diagonal~\eqref{eq:symmetric_robin} the Green's function diverges,
\[
 G(s,t) = \frac{(\lambda+s\wedge t)(\lambda+1-s\vee t)}{1+2\lambda} \;\sim\; \frac{\lambda}{2} \;\longrightarrow\;\infty
 \qquad (\lambda\to\infty),
\]
uniformly in $(s,t)$. The correlation, on the other hand, converges uniformly to~$1$:
\[
 \Corr(X_s,X_t) = \frac{(\lambda+s\wedge t)(\lambda+1-s\vee t)}{\sqrt{(\lambda+s)(\lambda+1-s)\,(\lambda+t)(\lambda+1-t)}} \;\xrightarrow{\lambda\to\infty}\; 1 .
\]
Correlation tending to~$1$ does not make the unscaled process constant. Directly from~\eqref{eq:symmetric_robin}, the increments remain Brownian-sized: $\Var(X_t-X_s) = |t-s| - \frac{|t-s|^2}{1+2\lambda} \xrightarrow{\lambda\to\infty} |t-s|$. A common Gaussian component with variance $\sim\lambda/2$ dominates the marginal variances and drives the correlations to~$1$. The unscaled process has no finite limit. After multiplication by $\sqrt{2/\lambda}$, its covariance tends to that of a single standard Gaussian variable repeated at every time.
\paragraph{The constant null mode.}
Let $\mathbf{1}$ denote the constant function $\mathbf{1}(s)=1$ on $[0,1]$. With Neumann conditions at both ends, $\mathbf{1}$ belongs to $\operatorname{dom}(\mathcal{L})$ and
\[
 \mathcal{L}\mathbf{1} = 0 ,
\]
so $\ker(\mathcal{L}) = \operatorname{span}\{\mathbf{1}\}$: the operator annihilates the constants. It is therefore not invertible and has no Green's function in the sense of~\eqref{eq:greens}. Along the Robin family, the precision of the emerging null mode tends to zero while its variance diverges. Section~\ref{sec:degenerate} treats this \emph{degenerate operator} and the periodic realization by the same source correction and balanced normalization.

Figure~\ref{fig:parameter_map} summarizes the Robin parameter space. Panel~(a) places the three nondegenerate corners and the Neumann--Neumann corner on the compactified square; panel~(b) extends the map beyond the quadrant to the admissible branches and inadmissible band of Section~\ref{sec:negative_branch}.

\begin{figure}[!t]
 \centering
 \includegraphics[width=\textwidth]{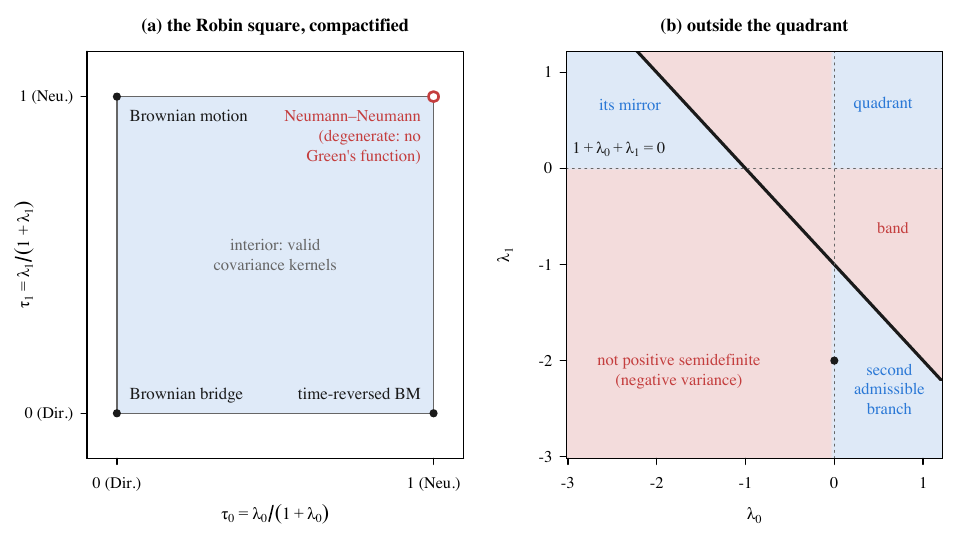}
 \caption{Robin parameter map. \emph{(a)} In the compactified coordinates $\tau_i=\lambda_i/(1+\lambda_i)$, Dirichlet ($\lambda_i=0$) lies at~$0$ and Neumann ($\lambda_i=\infty$) at~$1$. The corners give the Brownian bridge, Brownian motion, time-reversed Brownian motion and the degenerate Neumann--Neumann realization, which has no ordinary Green's function (Section~\ref{sec:NN_obstruction}). Each coordinate contributes one factor to the squared endpoint correlation (Remark~\ref{rem:tau_correlation}), and every interior point gives a covariance. \emph{(b)} The Robin family outside the nonnegative quadrant. Shading illustrates the admissible and inadmissible regions characterized in Section~\ref{sec:negative_branch}. The line $1+\lambda_0+\lambda_1=0$ marks where the Green kernel is singular. The marked point $(\lambda_0,\lambda_1)=(0,-2)$ gives Brownian motion with a random slope at $\sigma=1$, in the additional admissible branch.}
 \label{fig:parameter_map}
\end{figure}

\section{Nonlocal boundary conditions}\label{sec:nonlocal}

Robin conditions are local: each relates a function's value and derivative at a single endpoint. The nonlocal conditions considered first couple the two endpoints of $[0, 1]$: 
\begin{equation}\label{eq:nonlocal}
 f(0) = \alpha\,f(1), \qquad f'(0) = \beta\,f'(1), \qquad \alpha,\beta\in\mathbb{R}\setminus\{0\}.
\end{equation}

\subsection{Self-adjointness}

\begin{proposition}[Self-adjointness criterion]\label{prop:selfadj}
The operator $\mathcal{L}=-\dd^2/\dd s^2$ with conditions~\eqref{eq:nonlocal} is self-adjoint if and only if $\alpha\beta=1$.
\end{proposition}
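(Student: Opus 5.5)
The plan is to reduce self-adjointness to the vanishing of the boundary form from Green's identity. For $f,g\in H^2[0,1]$, integrating by parts twice gives
\[
 \langle \mathcal{L}f,g\rangle - \langle f,\mathcal{L}g\rangle = \bigl[f g' - f' g\bigr]_0^1 = f(1)g'(1)-f'(1)g(1) - f(0)g'(0)+f'(0)g(0).
\]
The realization with domain $D=\{f\in H^2: f(0)=\alpha f(1),\ f'(0)=\beta f'(1)\}$ is symmetric exactly when this form vanishes for all $f,g\in D$. For two linearly independent separated-or-coupled endpoint conditions of this type, the domain has codimension two in $H^2$, the maximal domain. Standard Sturm--Liouville theory (as in~\cite{Zettl2005}) then says that such a realization is self-adjoint if and only if it is symmetric. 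So it suffices to decide when the boundary form vanishes on $D\times D$. I would cite this equivalence rather than reprove it.

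Substituting the conditions for $f,g\in D$ expresses everything through the values at $s=1$:
\[
 f(1)g'(1)-f'(1)g(1) - \alpha\beta f(1)g'(1) + \alpha\beta f'(1)g(1) = (1-\alpha\beta)\bigl(f(1)g'(1)-f'(1)g(1)\bigr).
\]
If $\alpha\beta=1$, this vanishes identically, giving symmetry and hence self-adjointness.

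For the converse, suppose $\alpha\beta\neq1$. I would exhibit $f,g\in D$ with $f(1)g'(1)-f'(1)g(1)\ne0$. The main point is that the boundary data $(f(1),f'(1))$ range over all of $\mathbb{R}^2$ as $f$ ranges over $D$. Given targets $(p,q)$, prescribe the four values $f(0)=\alpha p$, $f'(0)=\beta q$, $f(1)=p$, $f'(1)=q$. A cubic polynomial (Hermite interpolation) realizes these four values and lies in $D$. Taking $f$ with data $(1,0)$ and $g$ with data $(0,1)$ makes the bracket equal to $1$, so the form equals $1-\alpha\beta\ne0$. Hence the operator is not even symmetric, let alone self-adjoint.

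The only delicate step is the appeal to the equivalence of symmetry and self-adjointness for domains cut out by two independent boundary conditions. An alternative route, if one wants to avoid it, is to compute the adjoint domain directly. One finds $g\in D^*$ exactly when $g(1)\cdot(\ldots)$ and $g'(1)\cdot(\ldots)$ cancel for all boundary data of $f$. This gives $D^*=\{g: g(0)=\beta^{-1}g(1),\ g'(0)=\alpha^{-1}g'(1)\}$, which equals $D$ iff $\alpha=1/\beta$. I would present this adjoint-domain computation, since it handles both directions at once and uses the surjectivity of the boundary data established above.
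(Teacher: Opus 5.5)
Your proposal is correct and follows the paper's proof. Both use the same Green's-identity boundary form $[fg'-f'g]_0^1$, which on the domain factors as $(1-\alpha\beta)\bigl(f(1)g'(1)-f'(1)g(1)\bigr)$. You also make explicit two steps the paper leaves implicit: the Hermite-cubic witness shows the bracket is not identically zero on the domain, and the adjoint-domain computation $D^*=\{g: g(0)=\beta^{-1}g(1),\ g'(0)=\alpha^{-1}g'(1)\}$ shows symmetry upgrades to self-adjointness, so your version is slightly more complete.
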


\begin{proof}
Two integrations by parts give the boundary form $\langle\mathcal{L}f,g\rangle - \langle f,\mathcal{L}g\rangle = [f\,g'-f'\,g]_0^1$. For $f,g$ obeying~\eqref{eq:nonlocal},
\begin{align*}
 [fg'-f'g]_0^1 &= \bigl(f(1)g'(1)-f'(1)g(1)\bigr) - \bigl(\alpha f(1)\cdot\beta g'(1) - \beta f'(1)\cdot\alpha g(1)\bigr) \\
 &= (1-\alpha\beta)\bigl(f(1)g'(1)-f'(1)g(1)\bigr).
\end{align*}
This vanishes for all admissible $f,g$ if and only if $\alpha\beta=1$.
\end{proof}

Two self-adjoint cases are central: $\alpha=\beta=1$ (periodic) and $\alpha=\beta=-1$ (anti-periodic). The periodic operator annihilates constants and is degenerate, so we defer it to Section~\ref{sec:degenerate}. We treat the non-degenerate anti-periodic case next, then turn to the full one-parameter family.

\subsection{The anti-periodic process}

The case $\alpha=\beta=-1$ gives a stationary process with equal-and-opposite endpoints. It also appears as a special case of Theorem~\ref{thm:alpha} below.

\needspace{5\baselineskip}
\begin{example}[Anti-periodic Green's function]\label{thm:anti}
For $\alpha=\beta=-1$, the operator has trivial null space and Green's function
\begin{equation}\label{eq:anti}
 G_{\mathrm{anti}}(s,t) = \frac{1-2|s-t|}{4}.
\end{equation}

\begin{proof}
The two branches of $G(s,t)=(1-2|s-t|)/4$ are affine in~$s$, with slopes $\tfrac12$ and $-\tfrac12$. They agree at $s=t$ and have the derivative jump $b_2-b_1=-1$ required by~\eqref{eq:matching}. At the endpoints, $G(0,t)=\tfrac14-\tfrac t2$ and $G(1,t)=\tfrac t2-\tfrac14$ for $t\in(0,1)$, so $G(0,t)=-G(1,t)$; the constant slopes give $\partial_sG(0,t)=\tfrac12=-\partial_sG(1,t)$. Both boundary conditions hold.
\end{proof}

The anti-periodic process is stationary, with constant variance $G(s,s)=\tfrac{1}{4}$, vanishing correlation at lag~$\tfrac{1}{2}$, and---characteristically---perfectly anti-correlated endpoints:
\[
 \Corr(X_0,X_1) = \frac{G(0,1)}{G(0,0)} = \frac{-1/4}{1/4} = -1, \qquad\text{so}\quad X_1 = -X_0\text{ almost surely.}
\]
The operator is strictly positive: its eigenvalues under anti-periodic conditions are $\bigl((2k+1)\pi\bigr)^2$, $k\ge 0$. Its inverse therefore defines a covariance, although the endpoint pair is singular because $X_1=-X_0$ almost surely. See Figure~\ref{fig:detail_anti} for the covariance surface and sample paths.
\end{example}

\begin{figure}[tbp]
 \centering
 \includegraphics[width=0.85\textwidth]{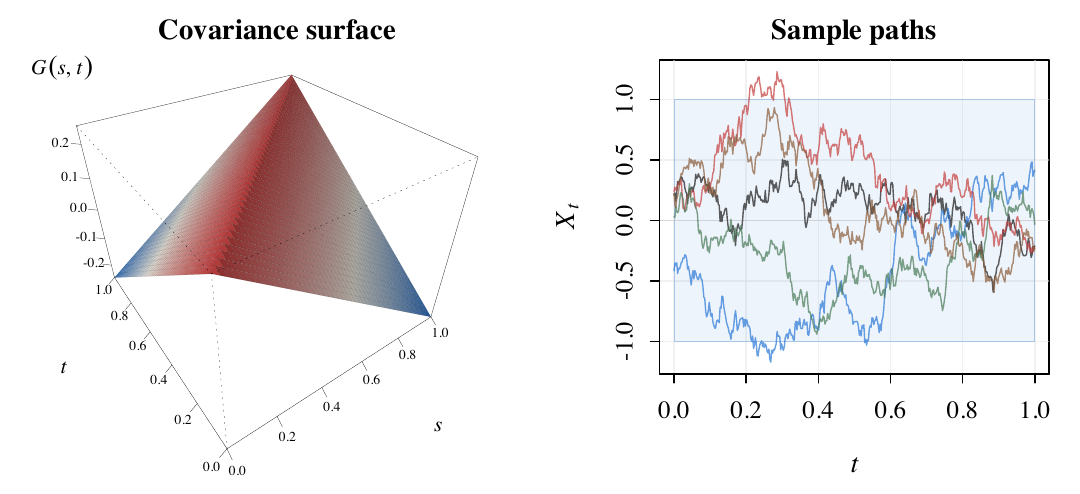}
 \caption{Anti-periodic process: covariance surface $G(s,t)$ on $[0,1]^2$ (left) and five sample paths with a $\pm2$-standard-deviation envelope (right).}
 \label{fig:detail_anti}
\end{figure}

\subsection{The general nonlocal family}

The self-adjointness constraint $\alpha\beta=1$ leaves a one-parameter family, with $\beta=1/\alpha$. Its non-degenerate members have $\alpha\in\mathbb{R}\setminus\{0,1\}$ and include the anti-periodic case $\alpha=-1$. The periodic case $\alpha=1$ is degenerate, not a finite covariance limit of this family. Other parameter limits connect to the Robin family. Figure~\ref{fig:nonlocal_family} shows the resulting covariance structure.

\begin{figure}[tbp]
 \centering
 \includegraphics[width=\textwidth]{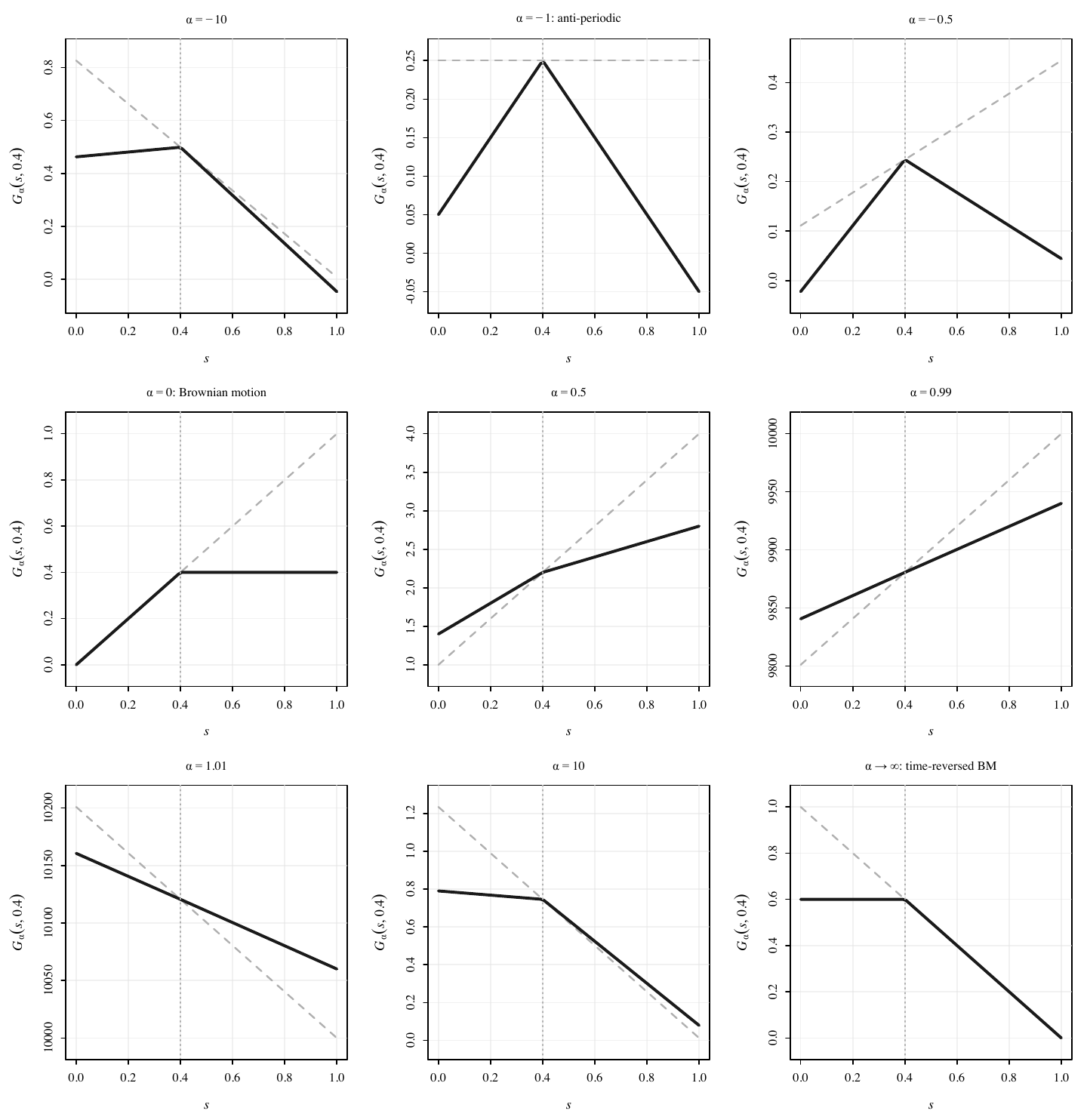}
 \caption{The $\alpha$-family of nonlocal covariance functions: the panels show $G_\alpha(s,0.4)$ (solid) and $\Var(X_s)=G_\alpha(s,s)$ (dashed) for $\alpha=-10,-1,-0.5,0,0.5,0.99,1.01,10,\infty$. The $\alpha=0$ and $\alpha\to\infty$ panels use the limiting kernels $s\wedge t$ (Brownian motion) and $1-s\vee t$ (time-reversed Brownian motion); $\alpha=-1$ is the anti-periodic process. The panels at $\alpha=0.99$ and $\alpha=1.01$ flank the excluded value $\alpha=1$, where the Green's function diverges.}
 \label{fig:nonlocal_family}
\end{figure}

We first write down the process, whose covariance follows by bilinearity, and then verify that this covariance solves the boundary-value problem.

\begin{proposition}[The $\alpha$-family from a Brownian path]\label{prop:alpha_pathwise}
Let $W$ be standard Brownian motion, let $\alpha\in\mathbb{R}\setminus\{0,1\}$, and set
\begin{equation}\label{eq:alpha_pathwise}
 X_s = W_s - a\,W_1 , \qquad a = \frac{\alpha}{\alpha-1} .
\end{equation}
Then
\begin{equation}\label{eq:alpha}
 \Cov(X_s,X_t) \;=\; G_\alpha(s,t) \;=\; \frac{\alpha^2}{(\alpha-1)^2} - \frac{(\alpha+1)(s+t)}{2(\alpha-1)} - \frac{|s-t|}{2}.
\end{equation}
The map $\alpha\mapsto a$ is an involution, with inverse $a\mapsto a/(a-1)$; it sends $\alpha=0$ to $a=0$, $\alpha=-1$ to $a=\tfrac12$, and $\alpha\to\infty$ to $a=1$, while the excluded value $\alpha=1$ corresponds to $a=\infty$.
\end{proposition}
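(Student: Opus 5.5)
The plan is a direct bilinearity computation, which is just the recentering identity~\eqref{eq:recentring} applied with a scaled functional. Write $X_s = W_s - aW_1$ and expand
\[
 \Cov(X_s,X_t) = s\wedge t - a\,(s\wedge 1) - a\,(t\wedge 1) + a^2 = s\wedge t - a(s+t) + a^2 ,
\]
using $s,t\in[0,1]$. Then substitute $s\wedge t = \tfrac12(s+t) - \tfrac12|s-t|$, which gives
\[
 \Cov(X_s,X_t) = a^2 - \bigl(a-\tfrac12\bigr)(s+t) - \tfrac12|s-t| .
\]

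It remains to express the coefficients in terms of~$\alpha$. With $a=\alpha/(\alpha-1)$ we have $a^2 = \alpha^2/(\alpha-1)^2$. We also have
\[
 a-\tfrac12 = \frac{2\alpha-(\alpha-1)}{2(\alpha-1)} = \frac{\alpha+1}{2(\alpha-1)} .
\]
Together these give~\eqref{eq:alpha} exactly. This step is routine, and the only care needed is the sign bookkeeping in $a-\tfrac12$.

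For the involution claim, set $\phi(x)=x/(x-1)$ and compute
\[
 \phi(\phi(x)) = \frac{x/(x-1)}{x/(x-1)-1} = \frac{x/(x-1)}{1/(x-1)} = x ,
\]
valid for $x\ne1$. Since $\phi$ maps $\mathbb{R}\setminus\{1\}$ to itself (it never takes the value~$1$), the inverse of $\alpha\mapsto a$ is $a\mapsto a/(a-1)$. The special values follow by evaluation: $\phi(0)=0$, $\phi(-1)=\tfrac12$, $\phi(\alpha)\to1$ as $\alpha\to\infty$, and $|\phi(\alpha)|\to\infty$ as $\alpha\to1$. The point $\alpha=1$ thus corresponds to $a=\infty$ on the projective line.

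There is no real obstacle here. The only subtle point is that the proposition asserts only the covariance identity and the parameter correspondence. The claim that $G_\alpha$ is the Green's function of the nonlocal realization with $\beta=1/\alpha$ is deferred to Theorem~\ref{thm:alpha}, so I would not verify boundary conditions here. As a sanity check, I would confirm that $\alpha=-1$, $a=\tfrac12$ reproduces $\tfrac14-\tfrac12|s-t|=G_{\mathrm{anti}}$ from~\eqref{eq:anti}, and that $a=0$ and $a=1$ give $s\wedge t$ and $1-s\vee t$.
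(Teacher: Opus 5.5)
Your proposal is correct and takes the same route as the paper's proof. Bilinearity gives $s\wedge t-a(s+t)+a^2$, the identity $s\wedge t=\tfrac12(s+t)-\tfrac12|s-t|$ rearranges it, and the coefficient identities for $a^2$ and $a-\tfrac12$ finish the covariance. Your check that $\phi\circ\phi$ is the identity on $\mathbb{R}\setminus\{1\}$ supplies the involution claim, which the paper asserts without proof.
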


\begin{proof}
Bilinearity and $\Cov(W_s,W_1)=s$ give $\Cov(X_s,X_t) = s\wedge t-a(s+t)+a^2$. Since $s\wedge t=\tfrac12(s+t)-\tfrac12|s-t|$, this equals
\[
 \Bigl(\tfrac12-a\Bigr)(s+t) + a^2 - \frac{|s-t|}{2} ,
\]
and the identities $\tfrac12-a = -(\alpha+1)/\bigl(2(\alpha-1)\bigr)$ and $a^2=\alpha^2/(\alpha-1)^2$ give~\eqref{eq:alpha}.
\end{proof}

The construction has two immediate consequences. Equation~\eqref{eq:alpha_pathwise} represents $X$ as a linear image of~$W$, so~\eqref{eq:alpha} is positive semidefinite. It also gives the endpoint coupling: $X_0=-a\,W_1$ and $X_1=(1-a)W_1$ are multiples of the same Gaussian variable, hence
\[
 X_0 = \frac{-a}{1-a}\,X_1 = \frac{a}{a-1}\,X_1 = \alpha\,X_1 \qquad\text{almost surely}.
\]
Thus the boundary condition $f(0)=\alpha f(1)$ holds pathwise, not just for the kernel. The construction does \emph{not} yet show that~\eqref{eq:alpha} is the corresponding operator's Green's function. The next theorem verifies this.

\begin{theorem}[The $\alpha$-family Green's function]\label{thm:alpha}
For $\alpha\in\mathbb{R}\setminus\{0,1\}$ with $\beta=1/\alpha$, the operator $\mathcal{L}=-\dd^2/\dd s^2$ with conditions $f(0)=\alpha f(1)$, $f'(0)=f'(1)/\alpha$ has trivial null space, and its Green's function is exactly the covariance~\eqref{eq:alpha}.
\end{theorem}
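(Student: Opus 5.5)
The plan has three parts: show the null space is trivial, check that $G_\alpha$ from \eqref{eq:alpha} satisfies the defining properties of a Green's function, and conclude by uniqueness.

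\emph{Trivial null space.} A solution of $-f''=0$ is affine, $f(s)=c+ds$. The condition $f'(0)=f'(1)/\alpha$ reads $d=d/\alpha$, so $d=0$ because $\alpha\neq1$. Then $f(0)=\alpha f(1)$ reads $c=\alpha c$, so $c=0$. Hence $\ker\mathcal{L}=\{0\}$. By Proposition~\ref{prop:selfadj}, the realization with $\beta=1/\alpha$ is self-adjoint, so it is invertible and its Green's function is unique. It is determined by the piecewise-affine form, the matching conditions~\eqref{eq:matching}, and the two boundary conditions. Indeed, if two candidates satisfy all of these, their difference is affine with no kink and lies in the kernel, so it vanishes.

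\emph{Verification.} Fix $t\in(0,1)$. From \eqref{eq:alpha}, $G_\alpha(\cdot,t)$ is affine on each side of $t$ and continuous at $s=t$. With $c=-(\alpha+1)/(2(\alpha-1))=\tfrac12-a$, the slopes are $c+\tfrac12$ for $s<t$ and $c-\tfrac12$ for $s>t$. The jump is therefore $-1$, as required by \eqref{eq:matching}.

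For the boundary conditions, compute the endpoint values:
\[
G_\alpha(0,t)=a^2+ct-\tfrac{t}{2}=a^2-at,
\qquad
G_\alpha(1,t)=a^2+c(1+t)-\tfrac{1-t}{2}=a^2-a+(1-a)t\cdot\ldots
\]
It is cleaner to use the form $s\wedge t-a(s+t)+a^2$ from the proof of Proposition~\ref{prop:alpha_pathwise}. This gives $G(0,t)=-a(t-a)$ and $G(1,t)=t-a-at+a^2=(1-a)(t-a)$. Their ratio is $-a/(1-a)=\alpha$, so $f(0)=\alpha f(1)$ holds.

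The endpoint derivatives are $\partial_sG(0,t)=1-a$ and $\partial_sG(1,t)=-a$. Their ratio is $(1-a)/(-a)=1/\alpha$, using $\alpha=a/(a-1)$. So $f'(0)=f'(1)/\alpha$ holds.

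\emph{Main obstacle.} The only delicate point is rigor rather than computation. One must confirm that a kernel satisfying the jump conditions and $\mathcal{B}$ genuinely represents $\mathcal{L}^{-1}$, i.e. that $u=\int G(\cdot,t)h(t)\,\dd t$ lies in $\operatorname{dom}(\mathcal{L})$ and solves $\mathcal{L}u=h$. This follows from the standard argument: each section lies in the domain away from $t$, and differentiating under the integral twice produces $-h$ from the unit kink. The boundary conditions are linear, so they pass through the integral. Uniqueness then comes from the trivial kernel. Finally, Proposition~\ref{prop:alpha_pathwise} already identifies this kernel as the covariance, which completes the proof.
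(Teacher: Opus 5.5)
Your argument is correct, but it runs in the opposite direction from the paper's. The paper's proof does not start from the candidate kernel. It writes $G(\cdot,t)=a_1+b_1s$ and $a_2+b_2s$ on the two sides of $t$, and imposes continuity, the unit kink and the two boundary conditions. It then solves this linear system ($b_1=-1/(\alpha-1)$, $b_2=-\alpha/(\alpha-1)$, then $a_1$ and $a_2$) and rearranges the result into \eqref{eq:alpha}. The trivial null space is left implicit in the fact that this system has exactly one solution.

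You instead prove $\ker\mathcal{L}=\{0\}$ directly. You take the candidate from Proposition~\ref{prop:alpha_pathwise} in the form $s\wedge t-a(s+t)+a^2$ and check the kink and both boundary conditions; your slopes $1-a$ and $-a$ are exactly the paper's $b_1$ and $b_2$. You then conclude by the difference argument.

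The paper's derivation shows that the conditions force the kernel without knowing the answer in advance. Your verification is shorter in the variable $a$ and makes the null-space half of the statement explicit. Your last paragraph also addresses the representation of $\mathcal{L}^{-1}$, which the paper takes as part of the definition in Section~\ref{sec:green}. Your uniqueness step uses only the trivial kernel, so the appeal to self-adjointness there is unnecessary.

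Two small repairs are needed. First, delete the abandoned line ending in $\cdot\ldots$; completed, it gives $a^2-a+(1-a)t=(1-a)(t-a)$. Second, state the value check as the identity $G(0,t)=-a(t-a)=\alpha(1-a)(t-a)=\alpha G(1,t)$, using $\alpha(1-a)=-a$, rather than as a ratio. For $\alpha<0$ one has $a\in(0,1)$, and both endpoint values vanish at the interior source point $t=a$.
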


\begin{proof}
Fix $t\in(0,1)$ and write $G(s,t)=a_1+b_1 s$ for $s<t$ and $G(s,t)=a_2+b_2 s$ for $s>t$. The matching conditions~\eqref{eq:matching} give $a_1+b_1 t = a_2+b_2 t$ and $b_2 - b_1 = -1$. The boundary conditions $G(0,t)=\alpha\,G(1,t)$ and $\partial_s G(0,t)=(1/\alpha)\,\partial_s G(1,t)$ give $a_1 = \alpha(a_2+b_2)$ and $b_1 = b_2/\alpha$. From the derivative condition, $b_1 = b_2/\alpha$, combined with $b_2 = b_1-1$, we obtain $b_1 = (b_1-1)/\alpha$, hence $b_1(\alpha-1)=-1$ and $b_1 = -1/(\alpha-1)$. Then $b_2 = b_1-1 = -\alpha/(\alpha-1)$. Continuity gives $a_1-a_2 = (b_2-b_1)t = -t$. The value condition $a_1 = \alpha(a_2+b_2) = \alpha(a_1+t-\alpha/(\alpha-1))$ yields $a_1(1-\alpha) = \alpha t - \alpha^2/(\alpha-1)$, so $a_1 = \alpha^2/(\alpha-1)^2 - \alpha t/(\alpha-1)$. For $s<t$,
\[
 G = a_1 + b_1 s = \frac{\alpha^2}{(\alpha-1)^2} - \frac{\alpha t}{\alpha-1} - \frac{s}{\alpha-1} = \frac{\alpha^2}{(\alpha-1)^2} - \frac{\alpha t + s}{\alpha-1}.
\]
Writing $\alpha t + s = \tfrac{1}{2}(\alpha+1)(s+t) + \tfrac{1}{2}(\alpha-1)(t-s)$ and noting $|s-t|=t-s$ for $s<t$ gives~\eqref{eq:alpha}. The case $s>t$ follows by symmetry.
\end{proof}

\begin{remark}[Positivity from the operator side]\label{rem:alpha_positive}
Theorem~\ref{thm:alpha} establishes invertibility, not positivity. The energy form supplies positivity for every parameter value in the stated non-degenerate family. Integration by parts gives $\langle f,\mathcal{L}f\rangle = \int_0^1 f'(s)^2\,\dd s + f'(0)f(0) - f'(1)f(1)$ for any twice-differentiable~$f$, as in the derivation of~\eqref{eq:energy}. On the domain of the $\alpha$-family, $f(0)=\alpha f(1)$ and $f'(0)=f'(1)/\alpha$, so the two boundary terms cancel identically,
\[
 f'(0)f(0) - f'(1)f(1) = \frac{f'(1)}{\alpha}\cdot\alpha f(1) - f'(1)f(1) = 0 ,
\]
for every~$\alpha$ in the stated family, leaving
\begin{equation}\label{eq:alpha_energy}
 \langle f,\mathcal{L}f\rangle = \int_0^1 f'(s)^2\,\dd s .
\end{equation}
The right-hand side vanishes only for constant~$f$, and a constant in the domain must satisfy $c=\alpha c$ with $\alpha\ne1$, hence $c=0$; so $\langle f,\mathcal{L}f\rangle>0$ for every nonzero~$f$ in the domain. The pair is therefore admissible in the sense of Section~\ref{sec:green}, and $G_\alpha$ is a covariance function for every $\alpha\in\mathbb{R}\setminus\{0,1\}$. Proposition~\ref{prop:alpha_pathwise} reaches the same conclusion by representing the process as a linear image of a Brownian path.
\end{remark}

The variance and endpoint correlation are
\begin{align}
 \Var(X_s) &= G_\alpha(s,s) = \frac{\alpha^2}{(\alpha-1)^2} - \frac{(\alpha+1)s}{\alpha-1}, \label{eq:alpha_var} \\
 \Corr(X_0,X_1) &= \sgn(\alpha), \qquad X_0 = \alpha\, X_1 \text{ almost surely.} \label{eq:alpha_corr}
\end{align}
The deterministic endpoint coupling $X_0=\alpha X_1$ can be read off the covariance itself, with no appeal to the construction of Section~\ref{sec:from_green_to_cov}. From~\eqref{eq:alpha}, $G_\alpha(0,t) = \alpha\,G_\alpha(1,t)$ for every~$t$, and in particular $G_\alpha(0,0)=\alpha^2/(\alpha-1)^2$, $G_\alpha(0,1)=\alpha/(\alpha-1)^2$, $G_\alpha(1,1)=1/(\alpha-1)^2$. Hence
\[
 \Var(X_0-\alpha X_1) = G_\alpha(0,0) - 2\alpha\,G_\alpha(0,1) + \alpha^2 G_\alpha(1,1) = \frac{\alpha^2-2\alpha^2+\alpha^2}{(\alpha-1)^2} = 0 ,
\]
and a zero-mean Gaussian variable with zero variance vanishes almost surely. The algebraic boundary condition $f(0)=\alpha f(1)$ on the operator domain therefore reappears as an almost-sure identity between the process's endpoint values. When $\alpha>0$ the endpoints have the same sign; when $\alpha<0$ they have opposite signs, with the anti-periodic case $\alpha=-1$ giving equal-and-opposite endpoints.

The construction~\eqref{eq:alpha_pathwise} also gives $\Corr(X_0,X_1)=\sgn(\alpha)$ directly from the sign of $-a(1-a)$. It doubles as a simulation recipe: draw one Brownian path and subtract $a$ times its terminal value. The Brownian bridge is not of this form for any~$a$.

\begin{remark}[Limiting cases]\label{rem:alpha_limits}
Two parameter limits recover the Robin corners mentioned above:
\begin{itemize}
\item $\alpha\to 0$: here $\alpha^2/(\alpha-1)^2\to 0$ and $-(\alpha+1)(s+t)/(2(\alpha-1))\to(s+t)/2$, giving
\[
 G_\alpha(s,t)\to \frac{s+t}{2}-\frac{|s-t|}{2} = s\wedge t \quad\text{(Brownian motion).}
\]
\item $\alpha\to\pm\infty$: likewise $\alpha^2/(\alpha-1)^2\to 1$ and $-(\alpha+1)(s+t)/(2(\alpha-1))\to -(s+t)/2$ regardless of the sign of $\alpha$, giving
\[
 G_\alpha(s,t)\to 1-\frac{s+t}{2}-\frac{|s-t|}{2} = 1-s\vee t \quad\text{(time-reversed Brownian motion)}
\]
from either tail: the two ends of the unbounded parameter range meet at a single limit.
\end{itemize}
These limits connect the Robin and nonlocal families. As $\alpha\to0$, we obtain Brownian motion, the Robin corner $\lambda_0=0$, $\lambda_1=\infty$; as $\alpha\to\pm\infty$, we obtain time-reversed Brownian motion, the corner $\lambda_0=\infty$, $\lambda_1=0$. Both limits are bounded. The pathwise construction in Proposition~\ref{prop:alpha_pathwise} extends to every real~$a$, including $a=0,1$. At these two values the operator conditions are local: the processes are ordinary members of the pathwise family but only limits of the nonlocal operator family. Proposition~\ref{prop:robin_blowup} below examines the divergent Neumann--Neumann corner of the Robin family and relates it to the whole branch $\alpha<0$. The other named cases are $\alpha=-1$ (anti-periodic, Example~\ref{thm:anti}) and the excluded value $\alpha=1$ (periodic and degenerate, Section~\ref{sec:degenerate}).
\end{remark}

\begin{example}[Slepian Brownian motion]\label{ex:slepian}
The $\alpha$-family and its two limits do not exhaust the list of nonlocal Brownian beasts. Consider the pair $(\mathcal{L},\mathcal{B})$ with boundary conditions
\begin{equation}\label{eq:slepian_bc}
 f'(0)+f'(1) = 0, \qquad f(0)+f(1) = f'(0) .
\end{equation}
These couple the two endpoints, so they are nonlocal, but they are not of the form~\eqref{eq:nonlocal}, which relates $f(0)$ to $f(1)$ and $f'(0)$ to $f'(1)$ separately; the second condition in~\eqref{eq:slepian_bc} mixes endpoint values with a derivative. The resulting operator is nevertheless self-adjoint. Writing $p=f(0)$ and $q=f'(0)$, the conditions give $f(1)=q-p$ and $f'(1)=-q$, so for any two members $f,g$ of the domain, with $P=g(0)$ and $Q=g'(0)$,
\[
 \bigl[fg'-f'g\bigr]_0^1 = \bigl\{(q-p)(-Q)+q(Q-P)\bigr\} - \bigl\{pQ-qP\bigr\} = (pQ-qP)-(pQ-qP) = 0 ,
\]
and the operator is invertible, with
\begin{equation}\label{eq:slepian}
 G(s,t) = \frac{1-|s-t|}{2} .
\end{equation}
This is the anti-periodic kernel plus a constant, $G = G_{-1}+\tfrac14$, and no member of~\eqref{eq:alpha} supplies that constant. The coefficient of $s+t$ in $G_\alpha$ is $-(\alpha+1)/\bigl(2(\alpha-1)\bigr)$, which vanishes only at $\alpha=-1$, and that same choice pins the constant term to $\alpha^2/(\alpha-1)^2=\tfrac14$. Thus exactly one value of~$\alpha$ gives the right shape, but its constant falls short by~$\tfrac14$. Nor is~\eqref{eq:slepian} a Robin kernel: every member of~\eqref{eq:robin_green} has the triangular factorization, whereas~\eqref{eq:slepian} does not. It lies outside both families as parametrized. The constructions below show that~\eqref{eq:slepian} is a covariance.

We give three constructions, each showing a different property. Adding an independent Gaussian scalar to the anti-periodic process gives
\[
 X_s = W_s - \tfrac12 W_1 + \tfrac12 Z , \qquad Z\sim N(0,1) \text{ independent of } W,
\]
so the kernel is a rank-one enlargement of $G_{-1}$. Alternatively, start independent Brownian motions at opposite ends:
\[
 X_s = \frac{W_s+\widetilde W_{1-s}}{\sqrt2} , \qquad \widetilde W \text{ a standard Brownian motion independent of } W.
\]
This explains the independent endpoints: $X_0=\widetilde W_1/\sqrt2$ and $X_1=W_1/\sqrt2$ come from independent paths. The covariances add, with $s\wedge t+(1-s\vee t)=1-|s-t|$. Finally, the representation
\[
 \sqrt2\,X_s = W_{s+1}-W_s
\]
identifies the process: $\sqrt2\,X$, not $X$ itself, consists of Brownian increments over a moving window of unit width. This is the unit-variance \emph{Slepian process}~\cite{Slepian1961,Shepp1971}, restricted to $[0,1]$ and studied there for its boundary-crossing probabilities. Our~$X$ has $\Var(X_s)=\tfrac12$, so it is a rescaled version. We call it Slepian Brownian motion to keep it with the rest of the bestiary, under the same Brownian normalization. Brownian scaling turns the third construction into one that never leaves the unit interval: with $W'_t=W_{2t}/\sqrt2$, itself a standard Brownian motion on $[0,1]$, $X_s = W'_{(s+1)/2}-W'_{s/2}$. Thus the process is a linear image of a single Brownian path on $[0,1]$ after all. The coefficient comparison above shows, more specifically, that it is not of the form $W-aW_1$ for any~$a$.

It is stationary: $\Var(X_s)=\tfrac12$ for every~$s$, and $G$ depends on $s-t$ alone. The anti-periodic process is also stationary, but its ends are tied together, $X_0=-X_1$; more stationary beasts will appear later. Here the endpoints are neither tied nor pinned: they are independent, since $G(0,1)=0$. Within the catalogue, only Slepian Brownian motion combines stationarity with independent endpoints. It is not Gauss--Markov: for $s<t<u$, $G(s,t)G(t,u)-G(s,u)G(t,t)=\tfrac14(t-s)(u-t)\ne0$, so the kernel has no triangular factorization. The $|s-t|$ term alone is not the obstruction: the Brownian kernel also contains one, yet is triangular.

The spectrum splits into two sectors. For the anti-periodic operator, $\cos\omega_ns$ and $\sin\omega_ns$ share the eigenvalue $\omega_n^2$ at every $\omega_n=(2n-1)\pi$; the corresponding covariance eigenvalue is $1/\omega_n^2$. Adding a constant to the kernel breaks this degeneracy only in the sector spanned by the sine modes: $\int_0^1\cos\omega_ns\,\dd s=0$, whereas $\int_0^1\sin\omega_ns\,\dd s=2/\omega_n\neq0$. The cosine modes remain at $\omega=(2k+1)\pi$, while the frequencies in the other sector satisfy
\begin{equation}\label{eq:slepian_spectrum}
 \tfrac12\,\omega\tan(\omega/2) = 1 ,
\end{equation}
and every eigenvalue is now simple. The covariance trace $\int_0^1 G(s,s)\,\dd s=\tfrac12$ splits into $\tfrac18$ from the cosine sector and $\tfrac38$ from the original sine sector. The anti-periodic trace of $\tfrac14$ divides evenly between them; the added constant contributes a further $\tfrac14$ entirely to the latter. Figure~\ref{fig:detail_slepian} shows the covariance surface and sample paths, with stationarity and independent endpoints both visible: the ridge along $s=t$ sits at height $\tfrac12$, and the surface touches zero at the two off-diagonal corners.
\end{example}

\begin{figure}[tbp]
 \centering
 \includegraphics[width=0.85\textwidth]{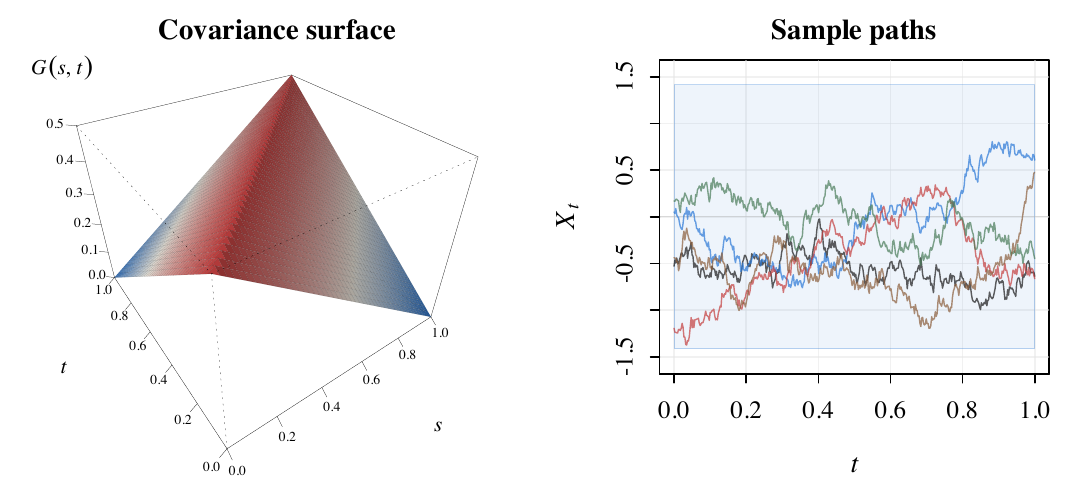}
 \caption{Slepian Brownian motion: covariance surface $G(s,t)$ on $[0,1]^2$ (left) and five sample paths with a $\pm2$-standard-deviation envelope (right). The constant-width envelope reflects stationarity. The covariance vanishes at $(0,1)$ and $(1,0)$, showing that the endpoints are independent.}
 \label{fig:detail_slepian}
\end{figure}

\begin{proposition}[The negative nonlocal branch as a Robin limit]\label{prop:robin_blowup}
For the Robin conditions $f(0)=\lambda_0\,f'(0)$, $f(1)=-\lambda_1\,f'(1)$ in~\eqref{eq:robin}, the corner $\lambda_0=\lambda_1=\infty$ is Neumann--Neumann: both ends are free, and no ordinary Green's function exists (Section~\ref{sec:NN_obstruction}). Fix $a\in(0,1)$ and approach this corner along the ray $\lambda_0=\lambda$, $\lambda_1=(1-a)\lambda/a$, with $\lambda\to\infty$. Then, uniformly in $(s,t)\in[0,1]^2$,
\begin{equation}\label{eq:robin_blowup}
 G_{\lambda_0,\lambda_1}(s,t) - \frac{\lambda_0\lambda_1}{1+\lambda_0+\lambda_1}
 \;=\; (1-a)\,(s\wedge t) + a\,(1-s\vee t) + O(1/\lambda) ,
\end{equation}
where the subtracted term is the endpoint covariance $\Cov(X_0,X_1)$ of Remark~\ref{rem:tau_correlation}, which carries the divergence described in Section~\ref{sec:NN_obstruction}. The limit is the covariance of $\sqrt{1-a}\,W_s+\sqrt{a}\,\widetilde W_{1-s}$ for independent Brownian motions $W,\widetilde W$. It equals $G_\alpha+a(1-a)$ for $\alpha=a/(a-1)$, with $G_\alpha$ as in~\eqref{eq:alpha}. Subtracting $(1-a)\lambda$ from the Robin kernel instead gives the limit $G_\alpha$ itself. The parameter $a$ is the coefficient of~\eqref{eq:alpha_pathwise}, and $a\mapsto\alpha=a/(a-1)$ is a decreasing bijection from $(0,1)$ onto $(-\infty,0)$. At $a=\tfrac12$, the ray ratio $r=(1-a)/a$ is $1$, so the ray is symmetric. Here $\alpha=-1$, and the limit in~\eqref{eq:robin_blowup} is exactly the Slepian Brownian motion covariance~\eqref{eq:slepian}.
\end{proposition}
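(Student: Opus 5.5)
The plan is to compute directly from the closed form~\eqref{eq:robin_green} of Theorem~\ref{thm:robin}, writing $u=s\wedge t$ and $v=1-s\vee t$ so that $G=(\lambda_0+u)(\lambda_1+v)/D$ with $D=1+\lambda_0+\lambda_1$. Expanding the numerator gives
\[
G=\frac{\lambda_0\lambda_1}{D}+\frac{\lambda_0 v+\lambda_1 u}{D}+\frac{uv}{D},
\]
and the first term is exactly $\Cov(X_0,X_1)$ from Remark~\ref{rem:tau_correlation}. The subtraction in~\eqref{eq:robin_blowup} therefore isolates the two remaining terms.

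On the ray $\lambda_0=\lambda$, $\lambda_1=(1-a)\lambda/a$ we have $D=1+\lambda/a$. Hence $\lambda_1/D=(1-a)-(1-a)/D$ and $\lambda_0/D=a-a/D$. Since $u,v\in[0,1]$, the middle term equals $(1-a)u+av+O(1/\lambda)$ and the last term is $O(1/\lambda)$, both uniformly in $(s,t)$ because every error is bounded by a constant times $1/D$. This gives~\eqref{eq:robin_blowup}.

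The limit $(1-a)(s\wedge t)+a(1-s\vee t)$ is the covariance of $\sqrt{1-a}\,W_s+\sqrt a\,\widetilde W_{1-s}$ by bilinearity and independence, using $\Cov(\widetilde W_{1-s},\widetilde W_{1-t})=1-s\vee t$ as in Example~\ref{ex:time_reversed}. To match it with $G_\alpha$, I would use $s\wedge t=\tfrac12(s+t)-\tfrac12|s-t|$ and $1-s\vee t=1-\tfrac12(s+t)-\tfrac12|s-t|$. This rewrites the limit as
\[
a+\bigl(\tfrac12-a\bigr)(s+t)-\tfrac12|s-t| .
\]
The proof of Proposition~\ref{prop:alpha_pathwise} gives $G_\alpha=a^2+(\tfrac12-a)(s+t)-\tfrac12|s-t|$ with $a=\alpha/(\alpha-1)$, equivalently $\alpha=a/(a-1)$ by the involution. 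The difference is $a-a^2=a(1-a)$, as claimed.

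For the alternative centering, I would note that $\lambda_0\lambda_1/D-(1-a)\lambda$ tends to a constant, to be computed exactly. Writing $\lambda_0\lambda_1=(1-a)\lambda^2/a$ and $D=(a+\lambda)/a$, we get $\lambda_0\lambda_1/D=(1-a)\lambda^2/(\lambda+a)=(1-a)\lambda-a(1-a)+O(1/\lambda)$. Subtracting $(1-a)\lambda$ instead of $\lambda_0\lambda_1/D$ therefore lowers the limit by $a(1-a)$, giving $G_\alpha$.

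The remaining claims need only short checks. For the monotonicity claim, $\dd\alpha/\dd a=-1/(a-1)^2<0$; moreover $a\to0^+$ gives $\alpha\to0^-$ and $a\to1^-$ gives $\alpha\to-\infty$, so the map is a decreasing bijection of $(0,1)$ onto $(-\infty,0)$. At $a=\tfrac12$ the limit is $\tfrac12(s\wedge t)+\tfrac12(1-s\vee t)=\tfrac12(1-|s-t|)$, which is~\eqref{eq:slepian}. No step is a genuine obstacle. The only delicate point is the bookkeeping of the constant $a(1-a)$ between the two centerings, and in particular keeping the $O(1/\lambda)$ expansion of $\lambda_0\lambda_1/D$ exact to order one.
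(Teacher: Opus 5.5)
Your proposal is correct and follows essentially the same route as the paper. Both arguments expand the numerator of \eqref{eq:robin_green} and identify $\lambda_0\lambda_1/(1+\lambda_0+\lambda_1)$ as the endpoint covariance. Both then show the remainder equals $(1-a)(s\wedge t)+a(1-s\vee t)$ up to terms bounded by a multiple of $a/(\lambda+a)$, use the exact shift $\lambda_0\lambda_1/(1+\lambda_0+\lambda_1)-(1-a)\lambda=-a(1-a)\lambda/(\lambda+a)$, and match against $G_\alpha=a^2+(\tfrac12-a)(s+t)-\tfrac12|s-t|$. The only difference is cosmetic: the paper collects the error into the single exact identity $a\{(s\wedge t)(1-s\vee t)-L\}/(\lambda+a)$, with $L$ the claimed limit, whereas you expand $\lambda_0/D$ and $\lambda_1/D$ separately, which is the same computation.
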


\begin{proof}
Since $(\lambda_0+s\wedge t)(\lambda_1+1-s\vee t) = \lambda_0\lambda_1 + \lambda_1(s\wedge t) + \lambda_0(1-s\vee t) + (s\wedge t)(1-s\vee t)$,
\[
 G_{\lambda_0,\lambda_1}(s,t) - \frac{\lambda_0\lambda_1}{1+\lambda_0+\lambda_1}
 = \frac{\lambda_1(s\wedge t) + \lambda_0(1-s\vee t) + (s\wedge t)(1-s\vee t)}{1+\lambda_0+\lambda_1} .
\]
Along the ray $\lambda_0=\lambda$, $\lambda_1=r\lambda$ with $r=(1-a)/a$, write $L=(1-a)(s\wedge t)+a(1-s\vee t)=\bigl[r(s\wedge t)+(1-s\vee t)\bigr]/(1+r)$ for the claimed limit. Subtracting $L$ from the right-hand side and using $L(1+r)=r(s\wedge t)+(1-s\vee t)$ cancels the $\lambda$-terms in the numerator and gives the exact identity
\[
 G_{\lambda_0,\lambda_1}(s,t) - \frac{\lambda_0\lambda_1}{1+\lambda_0+\lambda_1} - L
 \;=\; \frac{(s\wedge t)(1-s\vee t)-L}{(1+r)\lambda+1}
 \;=\; \frac{a\bigl[(s\wedge t)(1-s\vee t)-L\bigr]}{\lambda+a} ,
\]
using $1+r=1/a$. The bracketed numerator is bounded on $[0,1]^2$, so this is $O(1/\lambda)$ uniformly in $(s,t)$ and in~$a$, giving~\eqref{eq:robin_blowup}. The same substitution gives the exact identity $\lambda_0\lambda_1/(1+\lambda_0+\lambda_1) - (1-a)\lambda = -a(1-a)\lambda/(a+\lambda) \to -a(1-a)$. Subtracting $(1-a)\lambda$ instead therefore shifts the limiting kernel down by~$a(1-a)$.

For the pathwise interpretation, independence of $W$ and $\widetilde W$, together with $(1-s)\wedge(1-t)=1-s\vee t$, gives
\[
 \Cov\bigl(\sqrt{1-a}\,W_s+\sqrt a\,\widetilde W_{1-s},\ \sqrt{1-a}\,W_t+\sqrt a\,\widetilde W_{1-t}\bigr)
 = (1-a)(s\wedge t)+a(1-s\vee t) ,
\]
so the limit is positive semidefinite. At $a=\tfrac12$, this recovers the second construction in Example~\ref{ex:slepian}. Since $s\wedge t=\tfrac12(s+t-|s-t|)$ and $1-s\vee t=\tfrac12(2-s-t-|s-t|)$,
\[
 (1-a)(s\wedge t)+a(1-s\vee t) = a + \frac{(1-2a)(s+t)}{2} - \frac{|s-t|}{2} ,
\]
which is $G_\alpha(s,t)+a(1-a)$ at $\alpha=a/(a-1)$: that value of~$\alpha$ gives $\alpha^2/(\alpha-1)^2=a^2$ and $-(\alpha+1)/\{2(\alpha-1)\}=(1-2a)/2$ (the computation in the proof of Proposition~\ref{prop:alpha_pathwise}), so~\eqref{eq:alpha} equals $a^2+(1-2a)(s+t)/2-|s-t|/2$. This lies exactly $a(1-a)$ below the preceding display, matching the shift just found. Subtracting $(1-a)\lambda$ from the Robin kernel therefore gives the limit $G_\alpha$ itself. Finally $a\mapsto a/(a-1)$ has derivative $-1/(a-1)^2<0$ and carries $(0,1)$ onto $(-\infty,0)$; at $a=\tfrac12$, $r=1$ and $(1-a)(s\wedge t)+a(1-s\vee t)=\tfrac12\{(s\wedge t)+(1-s\vee t)\}=\tfrac12(1-|s-t|)$, using $s\vee t-s\wedge t=|s-t|$, which is~\eqref{eq:slepian}.
\end{proof}

We note two features of~\eqref{eq:robin_blowup}. First, the coefficient comparison identifies the entire limiting kernel. Both limiting kernels contain the same parameter-independent kink term $-|s-t|/2$; the remainder is affine in $s+t$. Second, these rays reach only $\alpha<0$, where the endpoints are perfectly anticorrelated throughout the branch. Reaching $\alpha>0$ requires $a<0$ or $a>1$, hence $r<0$ and a negative Robin parameter. Along the same ray, take $\lambda\to+\infty$ when $a<0$ and $\lambda\to-\infty$ when $a>1$; the ray then lies in the branch $\lambda_1<-(1+\lambda_0)$ or in its mirror image, respectively, and the identity in the proof holds as $|\lambda|\to\infty$. Those members are approached from the branch in Section~\ref{sec:negative_branch}, not from the admissible quadrant. This restriction concerns the rays, not the existence of the processes. For every $\alpha\in \mathbb{R}\setminus\{1\}$, $G_\alpha$ is a covariance by the construction in Proposition~\ref{prop:alpha_pathwise}, extended to $\alpha=0$. The value $\alpha=1$ is a pole of~\eqref{eq:alpha}; its periodic operator is the separate degenerate case in Section~\ref{sec:degenerate}, not a finite covariance limit of this family. The endpoint-symmetric member $\alpha=-1$ lies on the symmetric ray $r=1$. The open interval of ray parameters has the two bounded limits in Remark~\ref{rem:alpha_limits}: $a\to0$ gives Brownian motion, and $a\to1$ gives its time reversal, the $\alpha\to-\infty$ limit of~\eqref{eq:alpha} agreeing there with the $\alpha\to\infty$ limit already recorded.

\subsection{Why a bulk condition cannot replace an endpoint condition}\label{sec:bulk}

The conditions~\eqref{eq:nonlocal} are nonlocal because they couple the two endpoints. Another natural possibility couples an endpoint to a functional of the whole function, as in
\begin{equation}\label{eq:bulk}
 f(0) = \int_0^1 f(s)\,\dd s .
\end{equation}
This homogeneous linear condition is neither a local Robin condition~\eqref{eq:robin} nor an endpoint coupling of the form~\eqref{eq:nonlocal}. Can it replace one of the two endpoint conditions and still produce an admissible operator? Proposition~\ref{prop:bulk} answers no for the class of linear conditions considered below. The obstruction comes from the boundary form, not simply the number of conditions.

Write $\mathcal{D}=H^2[0,1]$ for the maximal domain of the differential expression $\mathcal{L}$ in $L^2[0,1]$, and let
\[
 \Gamma f = \bigl(f(0),\,f'(0),\,f(1),\,f'(1)\bigr) \in \mathbb{R}^4
\]
collect the endpoint data of~$f$. As in Proposition~\ref{prop:selfadj}, two integrations by parts give $\langle\mathcal{L}f,g\rangle-\langle f,\mathcal{L}g\rangle=J(\Gamma f,\Gamma g)=[fg'-f'g]_0^1$. This boundary form depends only on the endpoint traces $\Gamma f,\Gamma g$: the interior has been integrated away. $J$ is a nondegenerate skew-symmetric form on $\mathbb{R}^4$, and a nondegenerate skew form on $\mathbb{R}^4$ vanishes identically on a subspace only if that subspace has dimension at most~$2$; a $2$-dimensional subspace on which it vanishes is called a \emph{Lagrangian plane} of~$J$. Two independent homogeneous conditions \emph{on the endpoint traces} define a $2$-dimensional subspace $\Gamma(\operatorname{dom}(\mathcal{L}))$ of~$\mathbb{R}^4$: two equations in four unknowns. Self-adjointness also requires this plane to be Lagrangian, that is, $J$ must vanish on it. That is exactly what Proposition~\ref{prop:selfadj} checks for~\eqref{eq:nonlocal}: the two conditions cut out a $2$-plane depending on $\alpha,\beta$, and $\alpha\beta=1$ is precisely the constraint making it Lagrangian.

A bulk condition behaves differently. Here a condition is called \emph{bulk} if it contains a nonzero integral term $\int_0^1 w(s)f(s)\,\dd s$, with $w\in L^2[0,1]$ and $w\ne0$; \eqref{eq:bulk} uses $w=\mathbf{1}$. The map $\Gamma$ only reads off values and derivatives \emph{at} the two endpoints, and $\int_0^1 w f$ can be changed at will by adding an interior bump function that vanishes, together with its derivative, at $0$ and $1$; so a bulk term places no restriction whatsoever on $\Gamma f$. For any second linear condition of the form stated below---local, endpoint-coupling or bulk---at most one independent restriction on $\Gamma f$ remains after this interior freedom is used. Thus $\Gamma(\operatorname{dom}(\mathcal{L}))$ has dimension at least~$3$. The nondegenerate skew form $J$ cannot vanish on a subspace of dimension greater than~$2$, so such a pair of conditions cannot make $\mathcal{L}$ symmetric.

\begin{proposition}[Bulk conditions cannot make the Laplacian symmetric]\label{prop:bulk}
Let $w_1,w_2\in L^2[0,1]$, not both zero, let $\Lambda_1,\Lambda_2$ be linear functionals on~$\mathbb{R}^4$, and restrict $\mathcal{L}=-\dd^2/\dd s^2$ to
\[
 \operatorname{dom}(\mathcal{L}) = \Bigl\{\, f\in\mathcal{D} \;:\; \Lambda_i(\Gamma f) + \int_0^1 w_i(s)\,f(s)\,\dd s = 0,\quad i=1,2 \,\Bigr\}.
\]
Then $\mathcal{L}$ is not symmetric on this domain: there are $f,g\in\operatorname{dom}(\mathcal{L})$ with $\langle\mathcal{L}f,g\rangle\ne\langle f,\mathcal{L}g\rangle$. In particular, no pair of conditions of the stated form containing a nonzero bulk term yields a self-adjoint operator. If an ordinary Green's function exists, it cannot be a covariance kernel.
\end{proposition}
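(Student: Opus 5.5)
The proof shows that the endpoint-trace space $\Gamma(\operatorname{dom}(\mathcal{L}))$ has dimension at least~$3$, and then uses nondegeneracy of the boundary form~$J$ to find a pair $f,g$ in the domain with $J(\Gamma f,\Gamma g)\neq0$. Since $\langle\mathcal{L}f,g\rangle-\langle f,\mathcal{L}g\rangle=J(\Gamma f,\Gamma g)$ for $f,g\in\mathcal{D}$, as in Proposition~\ref{prop:selfadj}, this pair witnesses non-symmetry.

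\emph{Step 1: normalize the conditions.} Replacing the pair of conditions by an invertible linear combination of them leaves $\operatorname{dom}(\mathcal{L})$ unchanged. If $w_1,w_2$ are linearly dependent, say both are multiples of some $w\neq0$, I recombine so that the first condition has bulk weight $w\neq0$ and the second has weight $0$. This leaves two cases: (i) $w_1,w_2$ linearly independent; (ii) $w_1\neq0$, $w_2=0$.

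\emph{Step 2: interior freedom.} Let $\Phi:C_c^\infty(0,1)\to\mathbb{R}^2$ be $\Phi(\varphi)=\bigl(\int_0^1 w_1\varphi,\int_0^1 w_2\varphi\bigr)$. Every such $\varphi$ has $\Gamma\varphi=0$. The image of $\Phi$ is a subspace of $\mathbb{R}^2$. If some linear combination $c_1w_1+c_2w_2$ annihilated it, that combination would be $L^2$-orthogonal to the dense set $C_c^\infty(0,1)$, hence zero. So in case~(i) the image is all of $\mathbb{R}^2$, and in case~(ii) its first coordinate ranges over $\mathbb{R}$.

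Now fix $v\in\mathbb{R}^4$ and pick a cubic polynomial $f_0$ with $\Gamma f_0=v$. In case~(i), choose $\varphi$ with $\Phi(\varphi)=-\bigl(\Lambda_1 v+\int w_1f_0,\ \Lambda_2v+\int w_2f_0\bigr)$. Then $f=f_0+\varphi\in\operatorname{dom}(\mathcal{L})$ and $\Gamma f=v$, so $\Gamma(\operatorname{dom}(\mathcal{L}))=\mathbb{R}^4$. In case~(ii), the same correction in the first coordinate satisfies the bulk condition for every $v\in\ker\Lambda_2$, so $\Gamma(\operatorname{dom}(\mathcal{L}))\supseteq\ker\Lambda_2$, which has dimension $\ge3$. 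I expect this bookkeeping, especially the rank reduction and the density argument, to be the only delicate step.

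\emph{Step 3: a subspace of dimension $\ge3$ is not isotropic.} Let $U\subseteq\mathbb{R}^4$ with $\dim U\ge3$. Since $J$ is nondegenerate, the $J$-orthogonal complement $U^{J}=\{x:J(x,u)=0\ \forall u\in U\}$ has dimension $4-\dim U\le1<\dim U$. Hence $U\not\subseteq U^J$, and there are $u,u'\in U$ with $J(u,u')\neq0$. Take $f,g\in\operatorname{dom}(\mathcal{L})$ with $\Gamma f=u$ and $\Gamma g=u'$ from Step~2. Then $\langle\mathcal{L}f,g\rangle-\langle f,\mathcal{L}g\rangle=J(u,u')\neq0$. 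This proves non-symmetry, and a fortiori rules out self-adjointness.

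\emph{Step 4: Green's function.} Suppose $\mathcal{L}$ is invertible on this domain with inverse given by a kernel~$G$. If $G$ were a covariance, it would be symmetric, so the integral operator $\mathcal{G}$ would satisfy $\langle\mathcal{G}h,k\rangle=\langle h,\mathcal{G}k\rangle$. Then for $f,g$ in the domain,
\[
\langle\mathcal{L}f,g\rangle=\langle\mathcal{L}f,\mathcal{G}\mathcal{L}g\rangle=\langle\mathcal{G}\mathcal{L}f,\mathcal{L}g\rangle=\langle f,\mathcal{L}g\rangle,
\]
contradicting Step~3. Hence an ordinary Green's function, if it exists, cannot be a covariance kernel.
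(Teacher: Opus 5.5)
Your proof is correct and follows the paper's argument. You reduce non-symmetry to showing that $\Gamma(\operatorname{dom}(\mathcal{L}))$ has dimension at least~$3$, obtain this from interior $C_c^\infty(0,1)$ perturbations and density (separating the cases where $w_1,w_2$ are linearly independent or dependent), and conclude from nondegeneracy of the boundary form. Your Step~3 also proves the isotropic-subspace bound via the $J$-orthogonal complement, and your Step~4 derives the Green's-function consequence; the paper only asserts both.
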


\begin{proof}
The following boundary-form argument shows why a bulk condition cannot replace one of the two endpoint conditions.

As in Proposition~\ref{prop:selfadj}, $\langle\mathcal{L}f,g\rangle-\langle f,\mathcal{L}g\rangle = [fg'-f'g]_0^1$, and the right-hand side depends on $f$ and~$g$ only through $\Gamma f,\Gamma g$. Write $J$ for the induced bilinear form on~$\mathbb{R}^4$; in the coordinates $\Gamma f = (p,q,r,u)$,
\[
 J\bigl(\Gamma f,\Gamma g\bigr) = \bigl(r u' - u r'\bigr) - \bigl(p q' - q p'\bigr) ,
\]
which is skew-symmetric and nondegenerate. A nondegenerate skew form on $\mathbb{R}^4$ cannot vanish identically on a subspace of dimension greater than~$2$. It therefore suffices to show that the image $\Gamma\bigl(\operatorname{dom}(\mathcal{L})\bigr)$, which is a subspace of~$\mathbb{R}^4$, has dimension at least~$3$.

Interior perturbations leave the endpoint data unchanged: every $\psi\in C_c^\infty(0,1)$ has $\Gamma\psi=0$. Replacing $f$ by $f+\psi$ therefore changes the integral terms without changing $\Gamma f$. There are two cases. If $w_1,w_2$ are linearly independent, then $\psi\mapsto\bigl(\langle w_1,\psi\rangle,\langle w_2,\psi\rangle\bigr)$ maps $C_c^\infty(0,1)$ onto~$\mathbb{R}^2$, because $C_c^\infty(0,1)$ is dense in $L^2[0,1]$; thus, for any $\gamma\in\mathbb{R}^4$, choose $f_0\in\mathcal{D}$ with $\Gamma f_0=\gamma$, then choose $\psi$ to satisfy both conditions. Hence $\Gamma(\operatorname{dom}(\mathcal{L}))=\mathbb{R}^4$. Otherwise, after relabeling, $w_2=\mu w_1$ with $w_1\ne 0$; then $\Lambda_2-\mu\Lambda_1$ is a condition on endpoint data alone, one scalar correction suffices, and $\Gamma(\operatorname{dom}(\mathcal{L}))$ is the entire kernel of $\Lambda_2-\mu\Lambda_1$ in~$\mathbb{R}^4$, of dimension at least~$3$. In both cases the dimension exceeds~$2$, so $J$ does not vanish on it.
\end{proof}

The next example has a unique Green's function, but it is not symmetric.

\begin{example}[An integral condition with a nonsymmetric Green's function]\label{ex:bulk}
Pair the condition $f(0)=\int_0^1 f(s)\,\dd s$ with a Dirichlet condition at the right end, $f(1)=0$. To check uniqueness, write a solution of $\mathcal{L}u=0$ as $u(s)=A+Bs$. The condition $u(1)=0$ forces $B=-A$, and the integral condition then gives $A = \int_0^1 A(1-s)\,\dd s = A/2$. Thus $A=B=0$. The null space is trivial, the Green's function exists and is unique, and solving exactly as in Theorem~\ref{thm:robin} gives
\begin{equation}\label{eq:bulk_green}
 G(s,t) = \bigl(1-s\vee t\bigr)\,\bigl[\,t + (1-t)\,(s\wedge t)\,\bigr] .
\end{equation}
One checks directly that $\mathcal{L}_s\,G(s,t)=\delta(s-t)$, that $G(1,t)=0$, and that $G(0,t)=t(1-t)=\int_0^1 G(s,t)\,\dd s$. But~\eqref{eq:bulk_green} is not symmetric. Its antisymmetric part is
\begin{equation}\label{eq:bulk_asym}
 G(s,t) - G(t,s) = (1-s)(1-t)(t-s) ,
\end{equation}
which is nonzero whenever $s\ne t$ and $s,t<1$, and attains its maximum $\tfrac14$ at $(s,t)=(0,\tfrac12)$. A boundary-value problem can therefore have a unique Green's function that is not a covariance function. The failure of symmetry also has an explicit witness: $f(s)=s(1-s)(1-2s)$ and $g(s)=(1-s)(1+3s)$ both lie in the domain, since $f(0)=\int_0^1 f = 0$, $g(0)=\int_0^1 g = 1$ and $f(1)=g(1)=0$, and
\[
 \langle\mathcal{L}f,g\rangle - \langle f,\mathcal{L}g\rangle = f'(0)\,g(0) - f(0)\,g'(0) = 1 .
\]
\end{example}

The asymmetry is a property of the operator, not an artifact of any approximation: \eqref{eq:bulk_asym} is an identity, and it is nonzero wherever $s\ne t$ and $s,t<1$. Symmetrizing does not rescue the kernel either. Since $G(s,0)=0$ for every~$s$ but $G(0,t)=t(1-t)$, the symmetric part $\tfrac12\{G(s,t)+G(t,s)\}$ vanishes at $(0,0)$ yet equals $\tfrac12t(1-t)>0$ for $0<t<1$ along that row; a positive semidefinite kernel satisfies $K(s,t)^2\le K(s,s)K(t,t)$, so a zero diagonal entry forces its whole row to vanish. The $2\times2$ principal minor at $(0,\tfrac{1}{2})$ is $-\tfrac{1}{64}$.

\begin{remark}[The other pairings]\label{rem:bulk_other}
Pairing~\eqref{eq:bulk} with a Neumann condition instead, either $f'(1)=0$ or $f'(0)=0$, fails even earlier. The constant function~$\mathbf{1}$ satisfies~\eqref{eq:bulk}, because $\int_0^1\mathbf{1}=1$, and it satisfies either Neumann condition; hence $\mathcal{L}\mathbf{1}=0$, the operator has a nontrivial null space, and there is no ordinary Green's function. Proposition~\ref{prop:bulk} covers all three pairings at once.
\end{remark}

\section{Degenerate operators: null spaces and modified equations}\label{sec:degenerate}

In both the Neumann--Neumann case (Section~\ref{sec:NN_obstruction}) and the periodic case (Section~\ref{sec:nonlocal}), $\mathcal{L}=-\dd^2/\dd s^2$ annihilates constants. Projecting away this shared null space resolves both cases.

\subsection{Two mean-centered processes}\label{sec:centred_constructions}

We first construct the two processes. Both come from operation~2 in Section~\ref{sec:two_mechanisms}---recentering a process already in hand---and their covariances follow directly from~\eqref{eq:recentring}. Write $\bar{X}=\int_0^1X_u\,\dd u$ for the time average.

\begin{proposition}[The two mean-centered beasts]\label{prop:centred_constructions}
Let $W$ be standard Brownian motion and $B$ a Brownian bridge. Then
\begin{equation}\label{eq:meancentred_bm}
 \Cov\bigl(W_s-\bar{W},\,W_t-\bar{W}\bigr) = \tfrac12\bigl(s^2+t^2\bigr) - s\vee t + \tfrac13 ,
\end{equation}
\begin{equation}\label{eq:meancentred_bridge}
 \Cov\bigl(B_s-\bar{B},\,B_t-\bar{B}\bigr) = \tfrac12|s-t|^2 - \tfrac12|s-t| + \tfrac1{12} .
\end{equation}
\end{proposition}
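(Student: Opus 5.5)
Both identities are instances of the recentering formula~\eqref{eq:recentring} with the integral functional $\ell(X)=\bar X=\int_0^1 X_u\,\dd u$, that is, $g=\mathbf 1$. By~\eqref{eq:functional_cov}, $K_\ell(s)=\int_0^1K(s,u)\,\dd u$ and $V_\ell=\int_0^1\!\int_0^1K(u,v)\,\dd u\,\dd v$. For each kernel the plan is to compute these two quantities and substitute them into $K(s,t)-K_\ell(s)-K_\ell(t)+V_\ell$. Both kernels are bounded, so every quantity is finite.

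For Brownian motion, $K(s,t)=s\wedge t$. Splitting the integral at $u=s$ gives $K_\ell(s)=\int_0^s u\,\dd u+s(1-s)=s-\tfrac12 s^2$, and then $V_\ell=\int_0^1(s-\tfrac12s^2)\,\dd s=\tfrac12-\tfrac16=\tfrac13$. Substituting,
\[
 s\wedge t-s-t+\tfrac12(s^2+t^2)+\tfrac13 .
\]
The identity $s\wedge t-s-t=-(s\vee t)$ turns this into~\eqref{eq:meancentred_bm}.

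For the bridge, $K(s,t)=s\wedge t-st$. Subtracting $s\int_0^1u\,\dd u=s/2$ from the previous $K_\ell$ gives $K_\ell(s)=s-\tfrac12s^2-\tfrac12 s=\tfrac12 s(1-s)$. Integrating, $V_\ell=\tfrac12(\tfrac12-\tfrac13)=\tfrac1{12}$. The recentered covariance is therefore
\[
 s\wedge t-st-\tfrac12(s+t)+\tfrac12(s^2+t^2)+\tfrac1{12}.
\]
To reach~\eqref{eq:meancentred_bridge}, write $s\wedge t=\tfrac12(s+t)-\tfrac12|s-t|$, which cancels the linear terms. The remaining quadratic part is $\tfrac12(s^2+t^2)-st=\tfrac12|s-t|^2$, which gives the stated stationary form.

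The only step I expect to need care is this last rearrangement into a function of $|s-t|$. The rest is routine, and I would double-check it by verifying that the diagonal value $\tfrac1{12}$ is constant in $s$.
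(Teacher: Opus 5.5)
Your proposal is correct and follows the paper's proof: apply the recentering identity~\eqref{eq:recentring} with $\ell=\int_0^1$, compute $K_\ell(s)=s-\tfrac12 s^2$ and $V_\ell=\tfrac13$ for Brownian motion and $K_\ell(s)=\tfrac12 s(1-s)$ and $V_\ell=\tfrac1{12}$ for the bridge, then simplify. Your rearrangement via $s\wedge t=\tfrac12(s+t)-\tfrac12|s-t|$ simply writes out the step that the paper summarizes as ``the same computation.''
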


\begin{proof}
For~\eqref{eq:meancentred_bm}, $K_\ell(s)=\Cov(W_s,\bar W)=\int_0^1(s\wedge u)\,\dd u = s-\tfrac12s^2$ and $V_\ell=\Var(\bar W)=\tfrac13$, so~\eqref{eq:recentring} gives $s\wedge t-\bigl(s-\tfrac12s^2\bigr)-\bigl(t-\tfrac12t^2\bigr)+\tfrac13$, which is~\eqref{eq:meancentred_bm} because $s\wedge t-s-t=-s\vee t$. For~\eqref{eq:meancentred_bridge}, $\Cov(B_s,\bar B)=\tfrac12(s-s^2)$ and $\Var(\bar B)=\tfrac13-\tfrac14=\tfrac1{12}$, and the same computation gives the stated kernel.
\end{proof}

We call these the \emph{mean-centered} Brownian motion and the \emph{mean-centered} bridge. The rest of the section identifies their covariances as the normalized kernels of the two degenerate operators: \eqref{eq:meancentred_bm} is the Neumann--Neumann kernel and~\eqref{eq:meancentred_bridge} the periodic one. This operator route needs a corrected source and a normalization; the direct pathwise constructions do not. The processes are already in hand, but the operator route explains why these particular kernels appear.

\paragraph{Zero average does not specify a law.}
The following example illustrates the distinction drawn in Section~\ref{sec:two_mechanisms}. Recentering and conditioning can use the same path and the same functional yet produce different laws. Take $\ell(X)=\bar X$ and set
\begin{equation}\label{eq:C_vs_R}
 C_s = W_s - \bar{W} \qquad\text{(recentered)}, \qquad\qquad R_s = W_s - 3\Bigl(s-\tfrac12s^2\Bigr)\bar{W} \qquad\text{(conditioned)} ,
\end{equation}
where $R$ is~\eqref{eq:conditioning} with $K_\ell(s)=s-\tfrac12s^2$ and $V_\ell=\tfrac13$. Both satisfy $\bar C=\bar R=0$ almost surely, so both are \emph{balanced}. Equivalently, their kernels satisfy $\int_0^1G(s,t)\,\dd s=0$, but this property does not determine a law. The difference is already visible at the left endpoint:
\[
 C_0 = -\bar{W} , \qquad\text{with } \Var(C_0)=\tfrac13 , \qquad\qquad R_0 = 0 \ \text{ almost surely} .
\]
Conditioning leaves $W_0=0$ fixed because $K_\ell(0)=0$. Recentering shifts the whole path by a random constant, giving the value at the origin a positive variance. We call $R$ the \emph{integral-conditioned} Brownian motion and reserve \emph{mean-centered} for~$C$. Section~\ref{sec:bc_vs_constraints} develops the general comparison. See Figure~\ref{fig:centring_vs_conditioning}, which applies both maps to a single Brownian path.

\begin{figure}[!t]
 \centering
 \includegraphics[width=\textwidth]{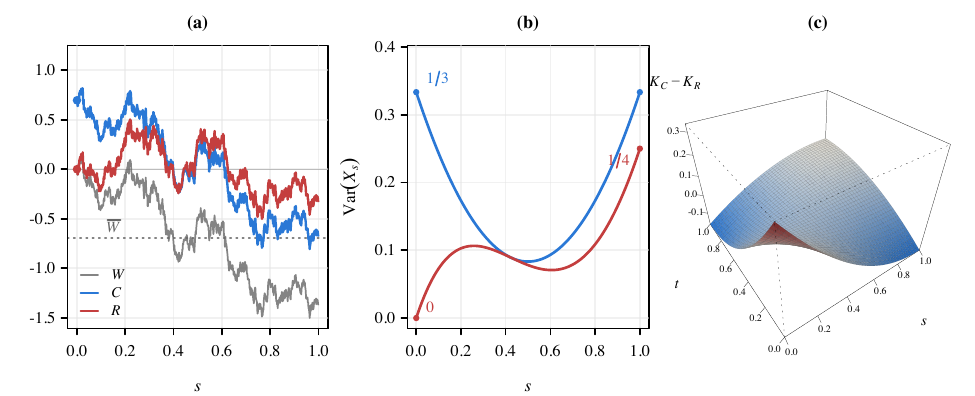}
 \caption{Recentering versus conditioning on one Brownian path. \emph{(a)} The path $W$ (gray, with its time average $\bar W$ dotted) and the transforms $C$ and $R$ in~\eqref{eq:C_vs_R}. Both have zero time average, but their marked starting values differ: $C_0=-\bar W$ has variance $\tfrac13$, whereas $R_0=0$ almost surely. \emph{(b)} Variance functions. Recentering gives variance $\tfrac13$ at both endpoints; conditioning leaves $s=0$ pinned and gives variance $\tfrac14$ at $s=1$. Recentering has at least as much variance at every time. \emph{(c)} The covariance difference $K_C-K_R$ is a positive-semidefinite rank-one kernel, although its off-diagonal entries change sign. The process $C$ is the Neumann--Neumann process derived below; $R$ comes from the rank-one conditioning formula~\eqref{eq:conditioning}.}
 \label{fig:centring_vs_conditioning}
\end{figure}

\subsection{The solvability obstruction}\label{sec:solvability}

An operator with nontrivial null space has no ordinary inverse. The Fredholm alternative explains the obstruction: for self-adjoint $\mathcal{L}$, the equation $\mathcal{L}u=h$ can have a solution only if the source~$h$ is orthogonal to the null space. We state this precisely for our setting.

\begin{lemma}[Fredholm solvability condition]\label{lem:fredholm}
Let $\mathcal{L}$ be self-adjoint on its domain, with closed range and null space $\mathcal{N}=\ker(\mathcal{L})=\operatorname{span}\{\varphi\}$. Then $\mathcal{L}u=h$ is solvable if and only if $\langle h,\varphi\rangle = 0$. For $\mathcal{L}=-\dd^2/\dd s^2$ with $\mathcal{N}=\operatorname{span}\{\mathbf{1}\}$, the same compatibility test applies distributionally to the Green's-function equation. For $t\in(0,1)$, the uncorrected equation $\mathcal{L}_s\,G(s,t)=\delta(s-t)$ fails that test:
\[
 \langle \delta(\cdot-t),\, \mathbf{1}\rangle = \int_0^1 \delta(s-t)\cdot 1\,\dd s = 1 \ne 0.
\]
The source $\delta(\cdot-t)$ is not orthogonal to the null space, so no solution exists.
\end{lemma}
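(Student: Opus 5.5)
The plan has two parts: the abstract Hilbert-space statement, then the distributional computation for the Green's-function equation.

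For the abstract statement, I would use the closed range theorem for self-adjoint operators. Since $\mathcal{L}=\mathcal{L}^*$ is closed and densely defined, $\operatorname{ran}(\mathcal{L})^\perp=\ker(\mathcal{L}^*)=\ker(\mathcal{L})=\mathcal{N}$. Hence $\overline{\operatorname{ran}(\mathcal{L})}=\mathcal{N}^\perp$. Under the closed-range hypothesis this gives $\operatorname{ran}(\mathcal{L})=\mathcal{N}^\perp=\{h:\langle h,\varphi\rangle=0\}$, which is the stated equivalence.

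\emph{Necessity} also has a one-line direct proof that I would include, because it is the version that carries over to distributions. If $\mathcal{L}u=h$ with $u$ in the domain, then $\langle h,\varphi\rangle=\langle \mathcal{L}u,\varphi\rangle=\langle u,\mathcal{L}\varphi\rangle=0$, using $\varphi\in\operatorname{dom}(\mathcal{L})$ and symmetry. \emph{Sufficiency} needs the closed range. For our two realizations, the Neumann--Neumann and periodic ones, closedness follows from the discrete spectrum with eigenvalues $\mu_n\to\infty$, as noted in Section~\ref{sec:from_green_to_cov}. The nonzero eigenvalues are therefore bounded away from zero, and on $\mathcal{N}^\perp$ one can invert termwise via $u=\sum_{\mu_n>0}\mu_n^{-1}\langle h,\varphi_n\rangle\varphi_n$.

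For the Green's-function equation, I would not try to put $\delta$ into $L^2$. Instead I would rerun the necessity computation directly for a candidate $G(\cdot,t)$ of the type described in Section~\ref{sec:green}: continuous, piecewise $H^2$, affine on each side of $t$, and satisfying $\mathcal{B}$. Integrating $-\partial_s^2 G$ against $\mathbf{1}$ on $[0,t)$ and on $(t,1]$ gives
\[
\partial_sG(0,t)-\partial_sG(1,t)+\bigl[\partial_sG(t^+,t)-\partial_sG(t^-,t)\bigr]\cdot(-1).
\]
The distributional identity $\mathcal{L}_sG=\delta(\cdot-t)$ forces the slope jump $-1$, as in \eqref{eq:matching}. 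The boundary terms are where $\mathcal{B}$ enters. Under Neumann--Neumann conditions both $\partial_sG(0,t)$ and $\partial_sG(1,t)$ vanish. Under periodic conditions $\partial_sG(0,t)=\partial_sG(1,t)$, so they cancel. In both cases the identity collapses to $0=\langle\delta(\cdot-t),\mathbf{1}\rangle=1$, which is a contradiction.

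An elementary cross-check is available. The two affine pieces carry four coefficients subject to four linear equations: continuity, the unit kink, and the two conditions in $\mathcal{B}$. Their determinant vanishes because constants lie in the null space, and the system turns out to be inconsistent. For Neumann--Neumann, $b_1=0$ and $b_2=0$ contradict $b_2-b_1=-1$. For periodic conditions, $b_1=b_2$ contradicts the same jump condition.

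The main obstacle is rigor in passing from the $L^2$ statement to the distributional one, since $\delta\notin L^2$. I would sidestep it as above: I would never apply the abstract lemma to $\delta$. The integration-by-parts argument only uses the piecewise regularity of $G(\cdot,t)$ and the boundary conditions, so it proves nonexistence directly.
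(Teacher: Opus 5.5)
Your proposal is correct. For the abstract equivalence you follow the paper's reasoning. The paper justifies the lemma only by remarking that, by self-adjointness, the range is orthogonal to the constants. Your identity $\operatorname{ran}(\mathcal{L})^\perp=\ker(\mathcal{L}^*)=\mathcal{N}$ gives both directions, with the closed-range hypothesis used exactly for sufficiency.

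The real difference is in the distributional step. The paper pairs $\delta(\cdot-t)$ with $\mathbf{1}$ and asserts that the $L^2$ test carries over, even though $\delta\notin L^2$. You never apply the abstract lemma to $\delta$. Instead you prove nonexistence directly for the only possible candidates: the distributional equation forces $G(\cdot,t)$ to be continuous and affine on each side of $t$, with slope jump $-1$. Your argument then uses only the derivative boundary conditions. This is the computation the paper performs later, in Proposition~\ref{prop:deriv_periodic}, for the corrected source. It pins down the obstruction: $\partial_sG(0,t)-\partial_sG(1,t)=\langle\delta(\cdot-t),\mathbf{1}\rangle=1$, whereas Neumann--Neumann and periodic conditions both force this difference to vanish.

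Your argument is also not tied to these two realizations. If $\mathbf{1}$ lies in the domain of a self-adjoint realization, the boundary form $J(\Gamma\mathbf{1},\Gamma f)=f'(1)-f'(0)$ must vanish on that domain, so the same contradiction arises. Your coefficient cross-check ($b_1=b_2=0$, respectively $b_1=b_2$, against $b_2-b_1=-1$) is already a complete proof by itself.

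There is one sign slip to fix. Integrating $-\partial_s^2G$ over $[0,t)$ and $(t,1]$ gives $\partial_sG(0,t)-\partial_sG(1,t)+\bigl[\partial_sG(t^+,t)-\partial_sG(t^-,t)\bigr]$, so the bracket carries coefficient $+1$, not $-1$. Since $G(\cdot,t)$ is affine on each piece, this sum vanishes. Hence $\partial_sG(0,t)-\partial_sG(1,t)=-\bigl[\partial_sG(t^+,t)-\partial_sG(t^-,t)\bigr]=1=\langle\delta(\cdot-t),\mathbf{1}\rangle$, while the boundary conditions make the left side $0$. Your conclusion survives the error, because a nonzero jump is incompatible with matched endpoint slopes under either sign convention. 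The displayed identity should still be corrected.
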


The operator cannot ``see'' constant shifts because they lie in its null space. By self-adjointness, its range is orthogonal to those same constants, so a source with nonzero constant component lies out of reach.

\subsection{Projecting the source}

The remedy is to project the source onto~$\mathcal{N}^\perp$ before solving. With $\varphi=\mathbf{1}$, the constant function with value~$1$ on $[0,1]$ (already $L^2$-normalized, with $\|\mathbf{1}\|^2=1$), the projection of $\delta(\cdot-t)$ onto the constants is $\langle\delta(\cdot-t),\mathbf{1}\rangle\,\mathbf{1} = \mathbf{1}$, so the corrected equation is
\begin{equation}\label{eq:corrected}
 \mathcal{L}_s\,G(s,t) = \delta(s-t) - 1 .
\end{equation}
The right-hand side now satisfies the Fredholm condition, $\int_0^1[\delta(s-t)-1]\,\dd s=1-1=0$, so a solution exists. For each fixed interior~$t$, it is unique up to an additive constant in~$\mathcal{N}$. We select it by imposing the \emph{balanced condition}
\begin{equation}\label{eq:zeromean}
 \int_0^1 G(s,t)\,\dd s = 0,
\end{equation}
which selects the representative in~$\mathcal{N}^\perp$. For the resulting zero-mean Gaussian process, \eqref{eq:zeromean} gives $\int_0^1 X_s\,\dd s=0$ almost surely. The process lies in the zero-average subspace on which the degenerate operator is invertible.

The balanced normalization fixes both the source correction and the additive constant. First, the constant weight~$\mathbf{1}$ used to correct the source in~\eqref{eq:corrected} is the only choice compatible with~\eqref{eq:zeromean}. Replacing it by an arbitrary weight $w$ with $\int_0^1 w = 1$ keeps the Fredholm condition intact, and, in the Neumann--Neumann case below, imposing symmetry gives a kernel of the form $V(s)+V(t)-s\vee t+k$, where $V''=w$ and $V(0)=V'(0)=0$; such a kernel has a row integral $\int_0^1 G(s,t)\,\dd s$ independent of~$t$ only when $V(t)-t^2/2$ is constant, that is, only when $w\equiv\mathbf{1}$. Second, once $w\equiv\mathbf{1}$, condition~\eqref{eq:zeromean} selects $k=\tfrac13$. Proposition~\ref{prop:k_range} shows that this is exactly the positive-semidefinite threshold within the constant-shift family. At this value the balanced kernel annihilates the constant function. At the kernel level, subtracting row and column averages removes the separable term $V(s)+V(t)$. All these kernels therefore have the same balanced part; this algebraic statement does not require the unbalanced kernels to be covariances.

\begin{remark}[The pseudoinverse interpretation]\label{rem:pseudoinverse}
Projecting the source and selecting the solution in~$\mathcal{N}^\perp$ defines the Moore--Penrose pseudoinverse $\mathcal{L}^\dagger$. The corrected source~\eqref{eq:corrected} and balanced condition~\eqref{eq:zeromean} are the two parts of this construction: the first removes the source's constant component, and the second selects the solution orthogonal to constants. The kernels $G_\NN$ and $G_{\mathrm{per}}$ are the kernels of~$\mathcal{L}^\dagger$. This also explains why Proposition~\ref{prop:bulk} does not rule out the balanced condition. It does not replace either endpoint condition in the domain of the original operator. Those two conditions already make the operator self-adjoint; only invertibility fails. Balance selects the pseudoinverse solution in~$\mathcal{N}^\perp$, rather than replacing an endpoint condition. The distinction is between normalizing a solution and defining a different operator domain.
\end{remark}

\begin{remark}[Polynomial terms in Green's functions]\label{rem:polynomial}
Away from $s=t$, equation \eqref{eq:corrected} reads $\mathcal{L}_s\,G(s,t) = -1$ rather than $\mathcal{L}_s\,G(s,t)=0$; since $\mathcal{L}=-\dd^2/\dd s^2$, that is $\partial_s^2 G(s,t) = 1$. Integrating twice produces a $\tfrac12s^2$ term. This connection between the source correction and polynomial covariance structure is analogous to the appearance of polynomial ``unpenalized'' terms in smoothing-spline theory~\cite{Wahba1990,Raket2021}.
\end{remark}

\subsection{The periodic process}

\needspace{5\baselineskip}
\begin{theorem}[Periodic Green's function]\label{thm:periodic}
For the periodic conditions $f(0)=f(1)$, $f'(0)=f'(1)$, equations~\eqref{eq:corrected}--\eqref{eq:zeromean} give
\begin{equation}\label{eq:periodic}
 G_{\mathrm{per}}(s,t) = \tfrac{1}{2}\,|s-t|^2 - \tfrac{1}{2}\,|s-t| + \tfrac{1}{12} = \tfrac{1}{2}B_2(|s-t|),
\end{equation}
where $B_2(x) = x^2 - x + \tfrac{1}{6}$ is the second Bernoulli polynomial.
\end{theorem}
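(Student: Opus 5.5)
The plan is to solve the corrected boundary-value problem directly, exactly as in the proof of Theorem~\ref{thm:robin}, with affine pieces replaced by quadratic ones. Fix $t\in(0,1)$. Away from $s=t$, equation~\eqref{eq:corrected} reads $\partial_s^2G=1$ (Remark~\ref{rem:polynomial}). So I will write
\[
 G(s,t)=\tfrac12 s^2+a_1+b_1s \quad (s<t), \qquad G(s,t)=\tfrac12 s^2+a_2+b_2s \quad (s>t),
\]
with coefficients depending on $t$. The matching conditions~\eqref{eq:matching} still hold, because the corrected source differs from $\delta(s-t)$ only by a bounded function. They give $a_1+b_1t=a_2+b_2t$ and $b_2-b_1=-1$.

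The periodic conditions supply the remaining relations. The derivative condition $\partial_sG(0,t)=\partial_sG(1,t)$ reads $b_1=1+b_2$. This coincides with the kink relation, which is exactly the Fredholm compatibility of Lemma~\ref{lem:fredholm} reappearing, so it gives no new information. The value condition $G(0,t)=G(1,t)$ reads $a_1=\tfrac12+a_2+b_2$. Continuity gives $a_1-a_2=-t$, so $b_2=-t-\tfrac12$ and $b_1=\tfrac12-t$. One constant, $a_1$, remains free, which is the expected null-space ambiguity.

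To fix it I will impose the balanced condition~\eqref{eq:zeromean}. Integrating the two quadratic pieces over $[0,t]$ and $[t,1]$ gives a linear equation in $a_1$, and I expect $a_1=\tfrac12t^2-\tfrac12t+\tfrac1{12}$. Substituting back, for $s<t$ the kernel becomes $\tfrac12(t-s)^2-\tfrac12(t-s)+\tfrac1{12}$. For $s>t$ the same substitution should give the expression with $s-t$ in place of $t-s$, which yields~\eqref{eq:periodic}. Matching with $\tfrac12B_2(x)=\tfrac12x^2-\tfrac12x+\tfrac1{12}$ is then immediate.

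The main obstacle is bookkeeping in the integral for the balanced condition. As a shortcut and cross-check, I will verify the answer directly rather than re-derive it. First, $\int_0^1 B_2(|s-t|)\,\dd s=\int_0^t B_2(u)\,\dd u+\int_0^{1-t}B_2(u)\,\dd u$. This vanishes by the symmetry $B_2(u)=B_2(1-u)$ together with $\int_0^1B_2=0$. Second, $|s-t|$ and $1-|s-t|$ give the same value of $B_2$, so the periodic value and slope conditions hold by inspection. Third, the slope jump at $s=t$ equals $-1$. Uniqueness of the normalized solution then identifies the kernel. As a final check, this matches the mean-centered bridge kernel~\eqref{eq:meancentred_bridge} of Proposition~\ref{prop:centred_constructions}.
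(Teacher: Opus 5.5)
Your proposal is correct and follows essentially the same route as the paper's proof. Both use a piecewise-quadratic ansatz with the matching conditions, value periodicity (derivative periodicity being redundant) and the balanced condition, giving the same coefficients $b_1=\tfrac12-t$, $b_2=-\tfrac12-t$, $a_1=\tfrac12t^2-\tfrac12t+\tfrac1{12}$; your direct check via $B_2(u)=B_2(1-u)$, $\int_0^1B_2=0$ and uniqueness of the balanced solution is a valid extra confirmation.
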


\begin{proof}
Fix $t\in(0,1)$. Since $\mathcal{L}_s\,G(s,t)=-1$ off the diagonal, write $G(s,t) = \tfrac{1}{2}s^2 + a_1 s + c_1$ for $s<t$ and $G(s,t) = \tfrac{1}{2}s^2 + a_2 s + c_2$ for $s>t$. Matching~\eqref{eq:matching} gives $a_1 t + c_1 = a_2 t + c_2$ and $a_2 - a_1 = -1$. Periodicity of the values gives $c_1=\tfrac12+a_2+c_2$. Eliminating the coefficients yields $a_1 = \tfrac{1}{2}-t$, $a_2 = -\tfrac{1}{2}-t$, and $c_1-c_2=-t$. The balanced condition~\eqref{eq:zeromean} fixes $c_2 = \frac{t(t+1)}{2}+\frac{1}{12}$, $c_1 = c_2 - t$. For $s<t$,
\[
 G = \tfrac{1}{2}s^2 + (\tfrac{1}{2}-t)s + \tfrac{t^2}{2} - \tfrac{t}{2} + \tfrac{1}{12} = \tfrac{1}{2}(s-t)^2 + \tfrac{1}{2}(s-t) + \tfrac{1}{12},
\]
with the symmetric expression for $s>t$. Together these give~\eqref{eq:periodic}, which agrees with the mean-centered bridge covariance~\eqref{eq:meancentred_bridge}.
\end{proof}

The periodic process is stationary on the circle, with variance $\tfrac1{12}$ and $\Corr(X_0,X_1)=1$. Figure~\ref{fig:detail_periodic} shows its covariance surface and sample paths. The covariance~\eqref{eq:periodic} is invariant under $|s-t|\mapsto1-|s-t|$, as required for a real stationary covariance on the circle.

\begin{figure}[tbp]
 \centering
 \includegraphics[width=0.85\textwidth]{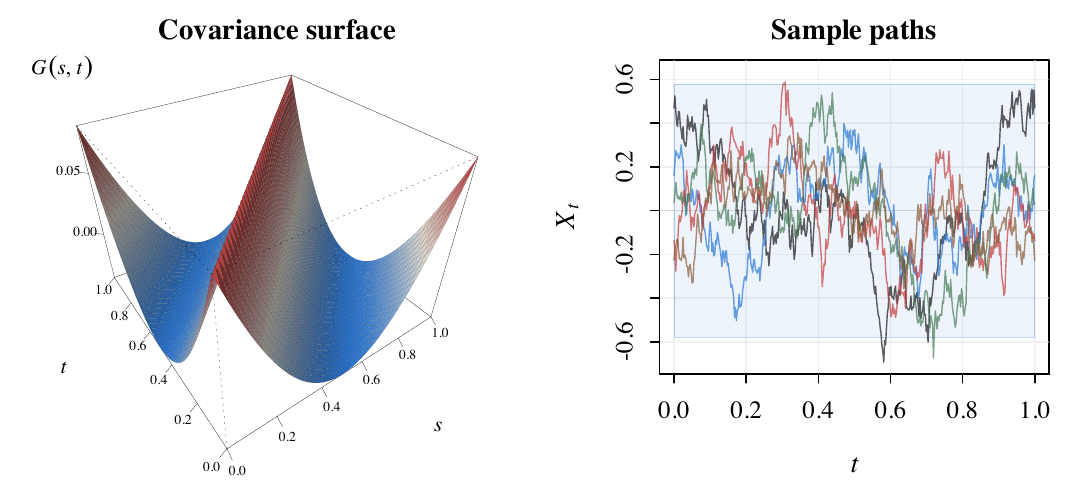}
 \caption{Periodic process: covariance surface $G(s,t)$ on $[0,1]^2$ (left) and five sample paths with a $\pm2$-standard-deviation envelope (right).}
 \label{fig:detail_periodic}
\end{figure}

\begin{proposition}[Derivative periodicity follows from the corrected equation]\label{prop:deriv_periodic}
For each interior source point $t\in(0,1)$, the corrected equation~\eqref{eq:corrected} implies $\partial_sG(0,t)=\partial_sG(1,t)$. Value periodicity and the balanced condition~\eqref{eq:zeromean} then determine the remaining coefficients.
\end{proposition}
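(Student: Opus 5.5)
The plan is to integrate the corrected equation~\eqref{eq:corrected} over the whole interval, so that the flux balance produces the derivative condition. Fix $t\in(0,1)$. Off the diagonal, \eqref{eq:corrected} gives $\partial_s^2G(s,t)=1$. By the matching conditions~\eqref{eq:matching}, $G(\cdot,t)$ is continuous and $\partial_sG$ drops by exactly one across $s=t$.

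The key step is to integrate $\partial_s^2G$ over $[0,t)$ and $(t,1]$ separately. This gives
\[
 \partial_sG(t^-,t)-\partial_sG(0,t)=t, \qquad \partial_sG(1,t)-\partial_sG(t^+,t)=1-t .
\]
Adding these and using $\partial_sG(t^+,t)-\partial_sG(t^-,t)=-1$ yields
\[
 \partial_sG(1,t)-\partial_sG(0,t)=t+(1-t)-1=0 .
\]
Equivalently, the integral of $\delta(s-t)-1$ over $[0,1]$ vanishes, which is exactly the Fredholm compatibility of Lemma~\ref{lem:fredholm}. So derivative periodicity holds automatically and is not an independent constraint. In the coefficient notation of Theorem~\ref{thm:periodic}, the condition $a_1=1+a_2$ is implied by the kink condition $a_2-a_1=-1$.

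For the second assertion, I would count the remaining unknowns in the piecewise form $G=\tfrac12s^2+a_is+c_i$. There are four coefficients. The kink condition fixes $a_2-a_1$, and continuity fixes $(a_1-a_2)t+c_1-c_2$, so $c_1-c_2=-t$. Value periodicity gives $c_1=\tfrac12+a_2+c_2$, which combined with the above yields $a_2=-\tfrac12-t$ and $a_1=\tfrac12-t$. The one remaining free constant, an additive element of $\mathcal{N}$, is fixed by the balanced condition~\eqref{eq:zeromean}, exactly as in the proof of Theorem~\ref{thm:periodic}.

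There is no real obstacle here. The only care needed is to state the integration in piecewise form, rather than integrating a distribution naively, so that the endpoint derivatives are one-sided limits at $s=0$ and $s=1$.
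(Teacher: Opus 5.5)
Your proposal is correct and follows the paper's own argument. The paper integrates the corrected equation $\mathcal{L}_sG(s,t)=\delta(s-t)-1$ over $[0,1]$ in a single distributional step; your piecewise integration over $[0,t)$ and $(t,1]$, combined with the unit kink from~\eqref{eq:matching}, is a careful unpacking of that step, and your coefficient count for the second assertion ($a_1=\tfrac12-t$, $a_2=-\tfrac12-t$, $c_1-c_2=-t$, with the last constant fixed by~\eqref{eq:zeromean}) reproduces the proof of Theorem~\ref{thm:periodic}.
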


\begin{proof}
The corrected equation already implies derivative periodicity:
\[
 \partial_s G(1,t)-\partial_s G(0,t) = \int_0^1\bigl[1-\delta(s-t)\bigr]\,\dd s = 0 .
\]
Value periodicity and the balanced condition determine the remaining coefficients and select a unique kernel; they are not needed for the displayed implication.
\end{proof}

Value periodicity alone still does not define a self-adjoint realization: one condition cannot reduce the endpoint-trace space $\mathbb{R}^4$ to a Lagrangian plane (Proposition~\ref{prop:bulk}). Balance normalizes the corrected solution; it does not replace a defining endpoint condition. It selects the representative orthogonal to the null space, not one within it.

\begin{example}[Value periodicity and balance alone are not enough]\label{ex:periodic_incomplete}
Replacing derivative periodicity by the balanced condition~\eqref{eq:zeromean} changes the operator. Solve $\mathcal{L}_s\,G(s,t)=\delta(s-t)$ directly on the domain $\{f: f(0)=f(1),\ \int_0^1 f = 0\}$. Value periodicity forces any affine null function to be constant, and balance then forces it to be zero. The resulting operator is non-degenerate, so no source correction is needed. Continuity, the derivative jump and the two side conditions give
\[
 G(s,t) = (1-t)s + \frac{t(t-1)}{2} \quad (s<t), \qquad G(s,t) = -ts + \frac{t(t+1)}{2} \quad (s>t).
\]
This kernel is not symmetric: for $s>t$,
\begin{equation}\label{eq:periodic_incomplete_asym}
 G(s,t)-G(t,s) = \frac{(t-s)(s+t-1)}{2} ,
\end{equation}
which vanishes only on the diagonal $s=t$ and the anti-diagonal $s+t=1$. In particular, $G(0.2,0.8)=G(0.8,0.2)$, so testing that pair alone would miss the asymmetry. The difference is nonzero elsewhere; it attains $-\tfrac18$ at $(s,t)=(1,\tfrac12)$ and $+\tfrac18$ at $(s,t)=(\tfrac12,0)$, both of maximal absolute value. The failure is exactly the one Proposition~\ref{prop:bulk} predicts: value periodicity alone leaves a three-dimensional image of~$\Gamma$, and the balanced condition, being a bulk condition, does not reduce it further.
\end{example}

\subsection{The Neumann--Neumann process}

\needspace{5\baselineskip}
\begin{theorem}[Neumann--Neumann Green's function]\label{thm:NN}
For $f'(0)=f'(1)=0$, equations~\eqref{eq:corrected}--\eqref{eq:zeromean} give
\begin{equation}\label{eq:NN}
 G_\NN(s,t) = \frac{s^2+t^2}{2} - s\vee t + \frac{1}{3}, \qquad \Var(X_s) = s^2-s+\frac{1}{3}.
\end{equation}
\end{theorem}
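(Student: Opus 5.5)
Following the proof of Theorem~\ref{thm:periodic}, I would fix $t\in(0,1)$ and use the corrected equation~\eqref{eq:corrected}: off the diagonal it reads $\partial_s^2G(s,t)=1$. So I write $G(s,t)=\tfrac12s^2+a_1s+c_1$ for $s<t$ and $G(s,t)=\tfrac12s^2+a_2s+c_2$ for $s>t$. The matching conditions~\eqref{eq:matching} give $a_2-a_1=-1$ and $(a_1-a_2)t+c_1-c_2=0$, hence $c_1-c_2=-t$. The Neumann conditions now fix the slopes directly. At the left end, $\partial_sG(0,t)=a_1=0$. At the right end, $\partial_sG(1,t)=1+a_2=0$, so $a_2=-1$, which agrees with $a_2=a_1-1$.

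This consistency is exactly the Fredholm compatibility of Lemma~\ref{lem:fredholm}, as in Proposition~\ref{prop:deriv_periodic}, and I would point that out. It shows that the two Neumann conditions together with the jump condition are not overdetermined. After this step only the single additive constant $c_2$ remains free.

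Next I impose the balanced condition~\eqref{eq:zeromean}. Writing $c_1=c_2-t$, the condition is
\[
 \int_0^1\tfrac12s^2\,\dd s-\int_t^1 s\,\dd s+c_2-t\cdot t=0 .
\]
This gives $\tfrac16-\tfrac12(1-t^2)+c_2-t^2=0$, so $c_2=\tfrac13+\tfrac12t^2$. Then $c_1=\tfrac13+\tfrac12t^2-t$. For $s<t$ this yields $G=\tfrac12(s^2+t^2)-t+\tfrac13$, and for $s>t$ it yields $G=\tfrac12(s^2+t^2)-s+\tfrac13$. Together these are~\eqref{eq:NN}. Setting $s=t$ gives $\Var(X_s)=s^2-s+\tfrac13$.

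As a check, the result agrees with the mean-centered Brownian motion covariance~\eqref{eq:meancentred_bm} from Proposition~\ref{prop:centred_constructions}. That agreement shows the kernel is positive semidefinite, so it is genuinely a covariance and not merely a symmetric solution.

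The main obstacle I expect is the bookkeeping in the balance integral, in particular keeping track of the $-t$ offset in $c_1$ across the two pieces. The rest is routine once the slopes are fixed by the boundary conditions.
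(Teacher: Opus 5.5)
Your proposal is correct and follows the paper's proof essentially step for step. The steps are: quadratic pieces $\tfrac12 s^2+a_is+c_i$ from the corrected equation, slopes $a_1=0$ and $a_2=-1$ from the Neumann conditions (consistent with the unit kink), $c_1-c_2=-t$ from continuity, and $c_2=\tfrac{t^2}{2}+\tfrac13$ from the balanced condition, with agreement with the mean-centered Brownian motion kernel as a closing check.
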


\begin{proof}
Fix $t\in(0,1)$. As in Theorem~\ref{thm:periodic}, write $G=\tfrac12s^2+a_i s+c_i$ off the diagonal. The Neumann conditions give $a_1=\partial_sG(0,t)=0$ and $a_2=\partial_sG(1,t)-1=-1$, consistent with the required derivative jump. Continuity gives $c_1-c_2 = -t$, and the balanced condition fixes $c_2 = \frac{t^2}{2}+\frac{1}{3}$, $c_1 = c_2-t$. For $s<t$, $G = \tfrac{1}{2}s^2 + c_1 = \tfrac{1}{2}(s^2+t^2) - t + \frac{1}{3}$, and symmetrically for $s>t$; both equal~\eqref{eq:NN}. This agrees with~\eqref{eq:meancentred_bm}: the corrected source and balanced condition have led back to the mean-centered Brownian motion of Proposition~\ref{prop:centred_constructions}.
\end{proof}

Unlike the periodic process, the Neumann--Neumann process is \emph{not} stationary. Its variance $s^2-s+\tfrac13$ has minimum $\tfrac1{12}$ at $s=\tfrac12$ and equals $\tfrac13$ at both endpoints. The endpoints are negatively correlated:
\[
 G_\NN(0,1) = \frac{0+1}{2}-1+\frac{1}{3} = -\frac{1}{6}, \qquad \Corr(X_0,X_1) = \frac{-1/6}{1/3} = -\frac{1}{2}.
\]
Neither endpoint is pinned, and subtracting the path's time average produces the negative endpoint correlation. See Figure~\ref{fig:detail_nn} for the covariance surface and sample paths, and Figure~\ref{fig:degenerate} for the comparison with the periodic case.

\begin{figure}[tbp]
 \centering
 \includegraphics[width=0.85\textwidth]{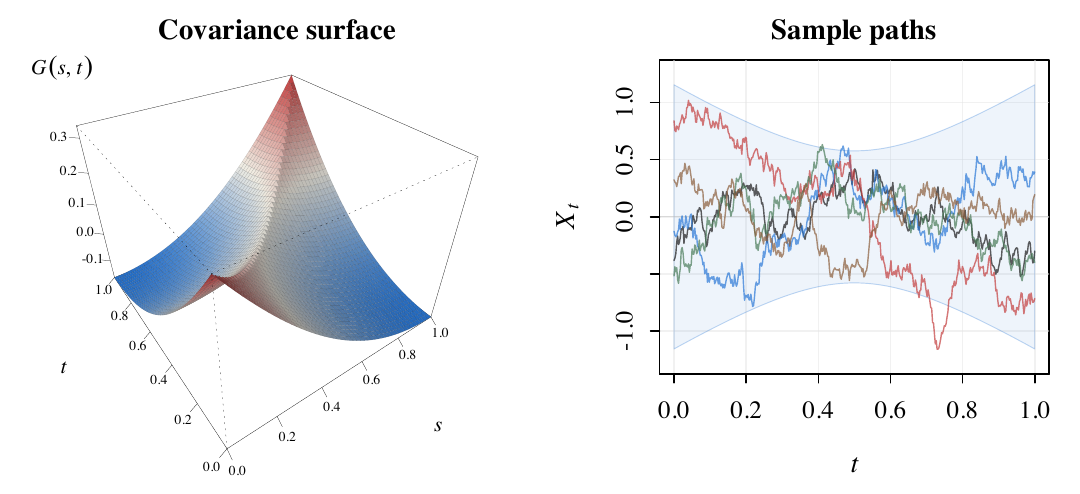}
 \caption{Neumann--Neumann process: covariance surface $G(s,t)$ on $[0,1]^2$ (left) and five sample paths with a $\pm2$-standard-deviation envelope (right).}
 \label{fig:detail_nn}
\end{figure}

\begin{figure}[tbp]
 \centering
 \includegraphics[width=\textwidth]{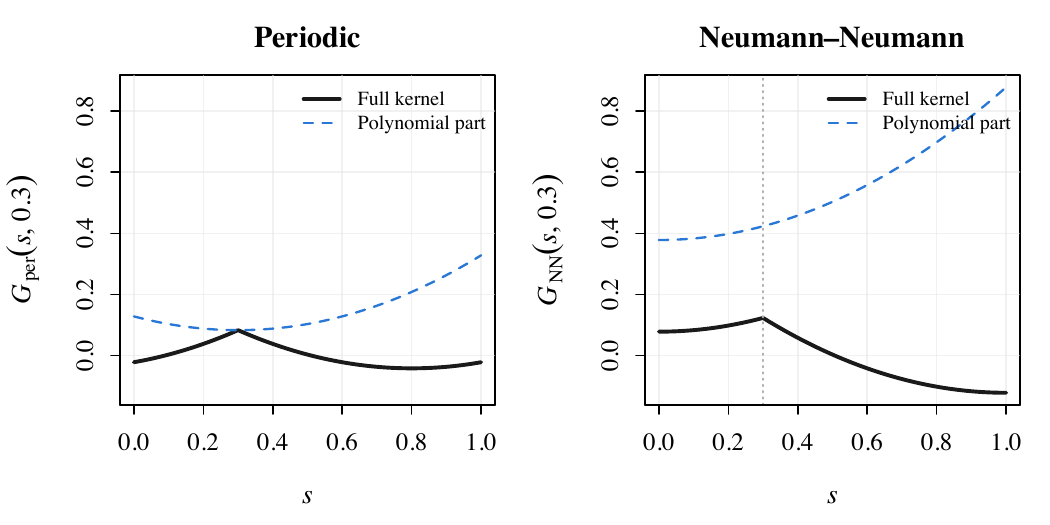}
 \caption{Covariance structure for the degenerate operators: periodic (left) and Neumann--Neumann (right). At $t=0.3$, each panel compares the full kernel (solid) with its polynomial part (dashed): $(s-t)^2/2+1/12$ for the periodic process and $(s^2+t^2)/2+1/3$ for Neumann--Neumann. The full kernels also contain $-|s-t|/2$ and $-\max(s,t)$, respectively. The quadratic terms come from the corrected source $\delta(s-t)-1$. The periodic process is stationary, with variance $1/12$; the Neumann--Neumann variance is parabolic.}
 \label{fig:degenerate}
\end{figure}

\subsection{The additive constant is not free}\label{sec:constant_not_free}

Equation~\eqref{eq:NN} belongs to the family $G_k(s,t)=\tfrac12(s^2+t^2)-s\vee t+k$, but $k$ is not an unrestricted normalization constant. Positive semidefiniteness imposes a lower bound, and balance selects the threshold value.

\begin{proposition}[The additive constant]\label{prop:k_range}
Let $G_k(s,t) = \tfrac{1}{2}(s^2+t^2) - s\vee t + k$.
\begin{enumerate}
\item[\emph{(a)}] $G_k$ is a valid covariance function (positive semidefinite) if and only if $k\ge\tfrac{1}{3}$.
\item[\emph{(b)}] The balanced condition~\eqref{eq:zeromean} forces $k=\tfrac13$, the smallest constant for which $G_k$ is positive semidefinite.
\item[\emph{(c)}] Pointwise pinning is not an alternative within this constant-shift family: no single~$k$ can make $G_k(s_0,t)=0$ for every~$t$, because it cannot cancel the nonconstant function $\tfrac12(s_0^2+t^2)-s_0\vee t$.
\end{enumerate}
\end{proposition}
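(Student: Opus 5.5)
Write $G_k = G_\NN + (k-\tfrac13)$, with $G_\NN = G_{1/3}$ the kernel of Theorem~\ref{thm:NN}. By Proposition~\ref{prop:centred_constructions}, $G_\NN$ is the covariance of the mean-centered Brownian motion $C_s=W_s-\bar W$, so it is positive semidefinite. For $k\ge\tfrac13$, $G_k$ is then the covariance of $C_s+\sqrt{k-1/3}\,Z$ with $Z\sim N(0,1)$ independent of $W$, hence a covariance. This gives the ``if'' half of (a).

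For the ``only if'' half we test $G_k$ against the constant function. Since $\bar C=0$ almost surely, $\int_0^1 G_\NN(s,t)\,\dd s=\Cov(\bar C,C_t)=0$ for every $t$. This can also be checked directly from the kernel:
\[
\int_0^1 \tfrac12 s^2\,\dd s=\tfrac16,\qquad \int_0^1 (s\vee t)\,\dd s=\tfrac12(1+t^2),
\]
so the row integral is $\tfrac16+\tfrac12t^2-\tfrac12-\tfrac12t^2+\tfrac13=0$. Consequently
\[
\iint G_k(s,t)\,\dd s\,\dd t = k-\tfrac13 .
\]
A positive semidefinite continuous kernel has nonnegative double integral against $\mathbf{1}$: this is the limit of Riemann sums $\sum_{i,j} c_ic_jK(s_i,s_j)$ with $c_i=1/n$, and continuity of $G_k$ justifies passing to the limit. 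Hence $k\ge\tfrac13$ is necessary. A finite-point witness is also available, for example evaluating $G_k$ on a fine uniform grid with equal weights, but the integral version is cleaner and I would use it.

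For (b), the row-integral computation above gives $\int_0^1 G_k(s,t)\,\dd s=k-\tfrac13$ for every $t$. The balanced condition~\eqref{eq:zeromean} therefore holds if and only if $k=\tfrac13$, which by (a) is the smallest admissible value.

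For (c), fix $s_0$ and consider $h(t)=\tfrac12(s_0^2+t^2)-s_0\vee t$. Requiring $G_k(s_0,t)=0$ for all $t$ means $h(t)=-k$ is constant. However, for $t>s_0$ we have $h'(t)=t-1$, which is nonzero on $(s_0,1)$ when $s_0<1$. If $s_0=1$, then on $[0,1)$ we have $h'(t)=t\neq 0$ for $t>0$. In either case $h$ is nonconstant, so no single $k$ can make $G_k(s_0,\cdot)$ vanish identically.

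The main obstacle is the necessity direction in (a). It hinges on noticing that the constant function is exactly the direction in which $G_\NN$ is degenerate, so that $k-\tfrac13$ is precisely the quadratic form evaluated at $\mathbf{1}$. Everything else is routine.
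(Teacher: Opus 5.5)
Your proof is correct, and for part~(a) it takes a different route from the paper. The paper argues spectrally. It writes $G_k=G_{1/3}+(k-\tfrac13)\,\mathbf{1}\otimes\mathbf{1}$ and notes that balance makes $\mathbf{1}$ an eigenfunction of $G_{1/3}$ with eigenvalue $0$, while the remaining eigenfunctions $\cos(n\pi s)$ have eigenvalues $1/(n\pi)^2>0$. It concludes that the rank-one shift moves only the constant eigenvalue, from $0$ to $k-\tfrac13$.

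You instead get sufficiency from the explicit Gaussian construction $C_s+\sqrt{k-1/3}\,Z$. You get necessity from the single test direction $\mathbf{1}$, passing from finite sums to the double integral by Riemann sums. The paper uses that test only afterwards, in Remark~\ref{rem:extremality}, to show the bound is sharp.

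Your version needs neither the Neumann eigen-decomposition (and its completeness) nor a Mercer-type argument connecting positivity of the integral operator with the finite-sum definition of positive semidefiniteness the paper adopts. It is therefore more elementary and self-contained. The spectral route gives more structure: it exhibits the full spectrum of $G_k$ and shows that the constant mode is the only direction that can fail. It also makes (b) immediate, since balance says exactly that the eigenvalue on constants is zero, which ties the threshold to the pseudoinverse picture.

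Your treatments of (b) and (c) follow the paper's, with the row integral $k-\tfrac13$ and the derivative check (including the edge case $s_0=1$) made explicit.
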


\begin{proof}
\emph{(a)--(b).} Write $G_k = G_{1/3} + (k-\tfrac{1}{3})\cdot\mathbf{1}\otimes\mathbf{1}$, where $\mathbf{1}\otimes\mathbf{1}$ denotes the rank-one kernel with constant value~$1$. The kernel $G_{1/3} = G_\NN$ satisfies the balanced condition~\eqref{eq:zeromean}, i.e.\ $\int_0^1 G_{1/3}(s,t)\,\dd s = 0$ for all~$t$. Thus the constant function is an eigenfunction of the integral operator with kernel~$G_{1/3}$ with eigenvalue~$0$. The remaining eigenfunctions are $\cos(n\pi s)$, $n\ge 1$, with strictly positive eigenvalues $1/(n\pi)^2$. Adding $(k-\tfrac{1}{3})\cdot\mathbf{1}\otimes\mathbf{1}$ shifts the eigenvalue on constants from $0$ to $k-\tfrac{1}{3}$ and leaves all others unchanged. Therefore $G_k$ is positive semidefinite if and only if $k-\tfrac{1}{3}\ge 0$, and the balanced condition selects the smallest admissible constant, $k=\tfrac13$.

\emph{(c).} The expression $G_k(s_0,t) = \tfrac{1}{2}s_0^2+\tfrac{1}{2}t^2-s_0\vee t + k$ depends nontrivially on~$t$ (through the terms $\tfrac{1}{2}t^2$ and $s_0\vee t$). Setting this to zero for all~$t$ simultaneously would require $k$ to absorb a $t$-dependent quantity, which a single constant cannot do.
\end{proof}

\begin{remark}[Why $k=\tfrac13$ is the positivity threshold]\label{rem:extremality}
The value $k=\tfrac13$ is the positivity threshold within this constant-shift family, not an arbitrary normalization. Balance selects a solution in $\mathcal{N}^\perp$, setting the covariance eigenvalue on the constant mode to zero. This is exactly the Moore--Penrose construction: $\mathcal{L}^\dagger$ maps the null space to zero and gives the minimum-norm solution for a solvable source. See Figure~\ref{fig:psd_boundary}. The constant function shows that the bound is sharp:
\[
 \langle\mathbf{1},G_k\mathbf{1}\rangle \;=\; \int_0^1\!\!\int_0^1 G_k(s,t)\,\dd s\,\dd t \;=\; k-\tfrac{1}{3},
\]
which is negative for every $k<\tfrac13$. Thus $\mathbf{1}$ directly witnesses the failure of positive semidefiniteness. At $k=\tfrac13$, the kernel is the covariance of mean-centered Brownian motion. The failing direction below the threshold is precisely the null direction of~$\mathcal{L}$. Finite grids require some care. On an equispaced grid with $n+1$ nodes and fixed $k<\tfrac13$, the smallest eigenvalue of the sampled kernel matrix diverges to $-\infty$, with $\lambda_{\min}\approx(n+1)(k-\tfrac13)$. It tracks the negative constant mode rather than tending to zero. The sampled matrix is positive semidefinite exactly for $k\ge\tfrac13-\tfrac1{12n^2}$ on this grid, and at $k=\tfrac13$ itself it is strictly positive definite, with its smallest eigenvalue tending to zero as $n\to\infty$. Grid refinement approaches the continuous threshold; a finite grid does not locate it exactly.
\end{remark}

\begin{figure}[tbp]
 \centering
 \includegraphics[width=0.8\textwidth]{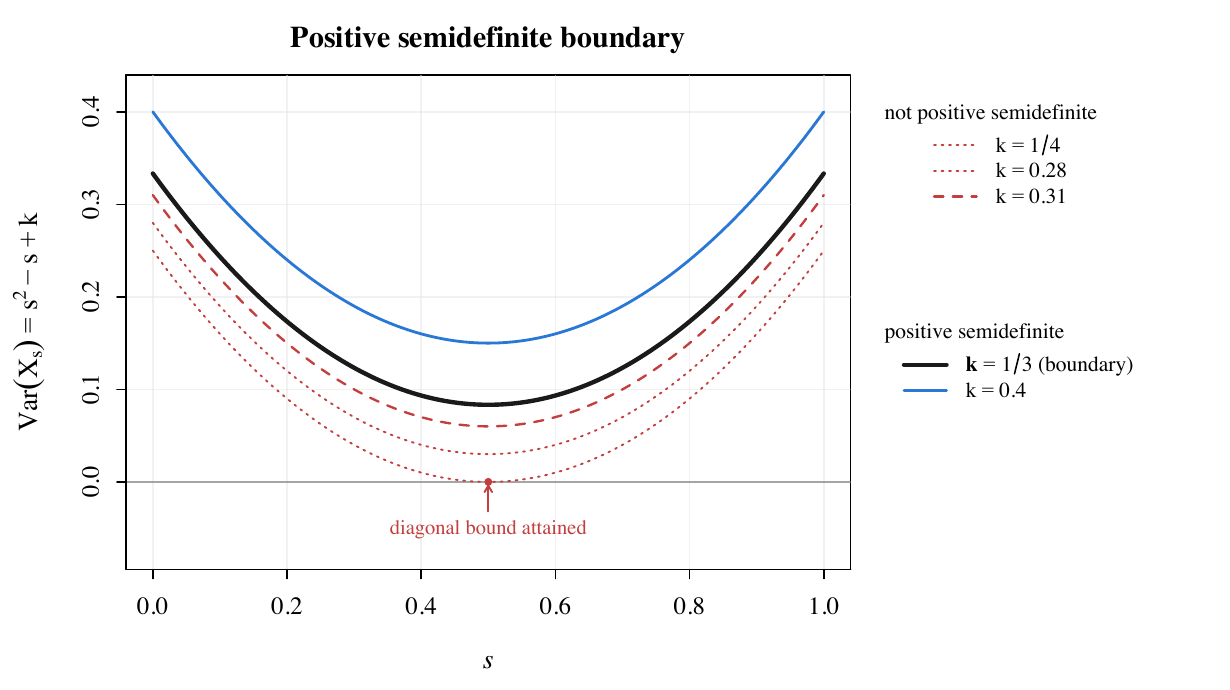}
 \caption{Diagonal profiles $G_k(s,s)=s^2-s+k$ at $k=\tfrac14,\,0.28,\,0.31,\,\tfrac13,\,0.4$ for the Neumann--Neumann constant-shift family. Nonnegative variance is necessary but not sufficient: all five curves stay at or above zero, but only $k=\tfrac13$ and $k=0.4$ give covariance kernels in this family (Proposition~\ref{prop:k_range}). The lowest curve attains the diagonal bound $k\ge\tfrac14$, touching zero at $s=\tfrac12$.}
 \label{fig:psd_boundary}
\end{figure}

Normalizing an inverse and constraining a process are different operations. Here balance selects the pseudoinverse on $\mathcal{N}^\perp$. A constant shift cannot instead pin the process at a point, as Proposition~\ref{prop:k_range}(c) shows. Whether a path constraint changes a given process is a separate question, taken up below under detectability.

\begin{remark}[A constant shift can change the operator]\label{rem:slepian_constant}
For either degenerate operator just studied, adding a constant to the balanced kernel uses the freedom left by the null space. The corrected equation allows this freedom; balance and positive semidefiniteness restrict it. Example~\ref{ex:slepian} gives the contrasting case. The anti-periodic operator is non-degenerate, so $G_{-1}+v$ cannot be a family of Green's functions for \emph{one} operator. Instead, varying~$v$ changes the pair $(\mathcal{L},\mathcal{B})$, whose boundary conditions are
\begin{equation}\label{eq:slepian_family}
 f'(0)+f'(1)=0, \qquad f(0)+f(1) = 4v\,f'(0) ,
\end{equation}
giving a self-adjoint operator for every $v\in\mathbb{R}$ by the calculation in Example~\ref{ex:slepian}. The covariance family runs from the anti-periodic process at $v=0$ through Slepian Brownian motion at $v=\tfrac14$. It has no finite kernel limit as $v\to\infty$. Dividing the second condition by $4v$ and letting $v\to\infty$ gives $f'(0)=0$, hence $f'(1)=0$ by the first: the boundary conditions degenerate to Neumann--Neumann. The limiting pair is degenerate and has no ordinary Green's function; correspondingly, $G_{-1}+v$ diverges. As in Proposition~\ref{prop:robin_blowup}, the boundary conditions approach a degenerate pair while the kernels diverge.

Positive semidefiniteness holds exactly for $v\ge0$. The constant direction gives $\langle\mathbf{1},(G_{-1}+v)\mathbf{1}\rangle=\tfrac1{12}+v$, which stays nonnegative down to $v=-\tfrac1{12}$, below the true threshold. Unlike in the degenerate case, testing the constant direction is therefore necessary but not sufficient. The obstruction is that $\mathbf{1}$ lies outside the reproducing kernel Hilbert space of $G_{-1}$. The next remark confirms the threshold directly from the energy form.
\end{remark}

\begin{remark}[The same threshold from the energy form]\label{rem:slepian_energy}
The energy form locates the threshold that the constant-direction test misses. Substituting $f'(1)=-f'(0)$ and $f(0)+f(1)=4v\,f'(0)$ from~\eqref{eq:slepian_family} into the integration-by-parts identity of Remark~\ref{rem:alpha_positive} gives $f'(0)f(0)-f'(1)f(1) = f'(0)\bigl(f(0)+f(1)\bigr) = 4v\,f'(0)^2$, so
\begin{equation}\label{eq:slepian_energy}
\langle f,\mathcal{L}f\rangle = \int_0^1 f'(s)^2\,\dd s + 4v\,f'(0)^2 .
\end{equation}
For $v\ge0$, both terms are nonnegative. If the form vanishes, then $f$ is constant, and the domain condition gives $2f(0)=4v\,f'(0)=0$, so $f\equiv0$. Thus the admissible range includes the endpoint $v=0$. For $v\ne0$, eliminating $f'(0)=\bigl(f(0)+f(1)\bigr)/(4v)$ turns~\eqref{eq:slepian_energy} into the closed $H^1$ form $q_v(f) = \int_0^1 f'(s)^2\,\dd s + \bigl(f(0)+f(1)\bigr)^2/(4v)$, continuous on $H^1[0,1]$ and requiring no pointwise trace of $f'$. At $v=0$, the boundary condition itself imposes $f(0)+f(1)=0$. For $v<0$, we can exhibit a negative-energy function in $\operatorname{dom}(\mathcal{L})\subset H^2$. A rectangular spike in $f'$ would be discontinuous, but a linear taper has the required regularity. Choose $h\ne0$ and $0<\varepsilon<\tfrac12$. Let $f'$ decrease linearly from $h$ at $0$ to $0$ at $\varepsilon$, remain zero until $1-\varepsilon$, then decrease linearly to $-h$ at $1$. Set $f(0)=2vh$ to satisfy the remaining boundary condition. Then $\int_0^1f'(s)^2\,\dd s=2\varepsilon h^2/3$ and $\langle f,\mathcal{L}f\rangle=h^2(2\varepsilon/3+4v)$, which is negative when also $\varepsilon<-6v$. The same distinction applies to the Robin energy~\eqref{eq:energy}: the derivative seminorm alone does not control its boundary terms. Writing the form in terms of endpoint values, as in $q_v$, avoids point evaluations of the derivative.
\end{remark}

\section{Conditioning: constraints on a fixed process}\label{sec:conditioning}

We now turn to operation~3: conditioning. The rank-one identity~\eqref{eq:conditioning}, established in Section~\ref{sec:recentring_conditioning}, generates the examples below; Figure~\ref{fig:conditioning} illustrates the mechanism. Several processes from Sections~\ref{sec:robin}--\ref{sec:degenerate} reappear along this route.

\begin{figure}[tbp]
 \centering
 \includegraphics[width=\textwidth]{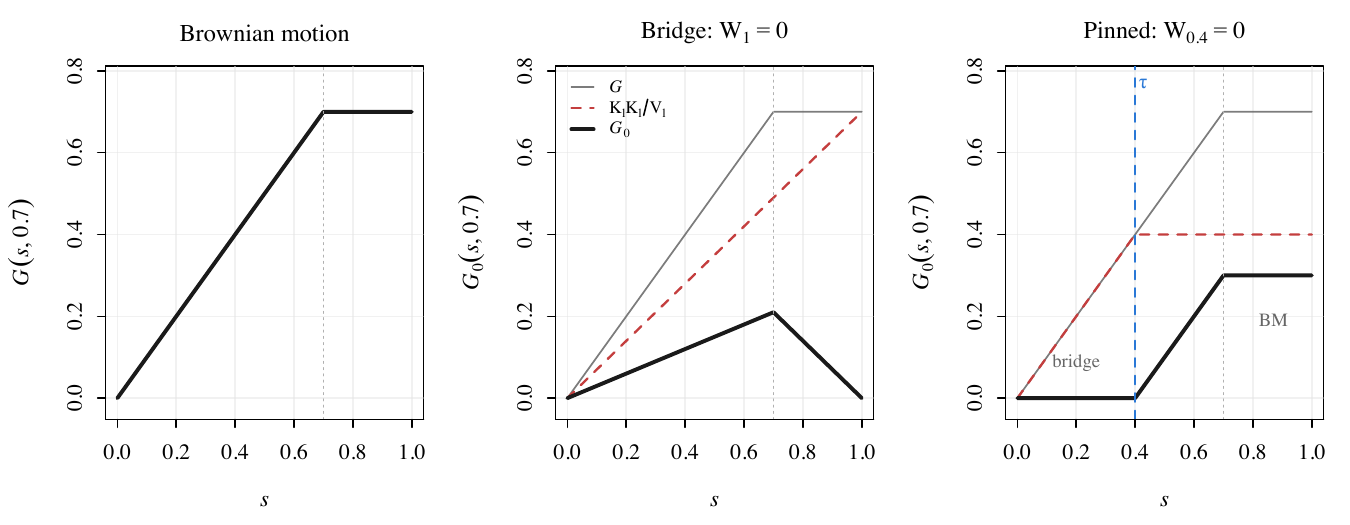}
 \caption{Conditioning as a rank-one covariance update, with every panel evaluated at $t=0.7$. Left: the unconditioned Brownian covariance $G(s,0.7)$. Middle: conditioning on $W_1=0$. Right: conditioning on $W_{0.4}=0$, which gives a bridge on $[0,0.4]$ and an independent Brownian motion on $[0.4,1]$. Each conditioned panel shows the base kernel $G(s,0.7)$ in gray, the rank-one correction $K_\ell(s)K_\ell(0.7)/V_\ell$ dashed in red, and their difference, $G_0(s,0.7)$, as the heavy solid curve. The dashed blue line marks the interior pin at $\tau=0.4$.}
 \label{fig:conditioning}
\end{figure}

\subsection{Interior points and the Markov split}

\begin{example}[Bridge by conditioning]\label{ex:bridge_cond}
Take $X=W$ (Brownian motion) and $\ell(W)=W_1$, so $K_\ell(s) = s$ and $V_\ell = 1$. Then $G_0(s,t) = s\wedge t - st$: the Brownian bridge of Example~\ref{ex:bridge}, now derived as a conditioned process rather than from boundary conditions. Operations~1 and~3 agree here.
\end{example}

\begin{example}[Interior pinning and independent segments]\label{ex:interior}
Condition $W$ on $W_\tau = 0$ for $\tau\in(0,1)$: $K_\ell(s)=s\wedge\tau$, $V_\ell=\tau$, and
\[
 G_0(s,t) = s\wedge t - \frac{(s\wedge\tau)(t\wedge\tau)}{\tau}.
\]
For $s,t\le\tau$ this is $s\wedge t - st/\tau$, a Brownian bridge on $[0,\tau]$. For $s,t\ge\tau$ it is $s\wedge t - \tau$, a fresh Brownian motion on $[\tau,1]$, started from zero at time~$\tau$. The variance $\Var(X_s)=s-(s\wedge\tau)^2/\tau$ vanishes at $s=\tau$, as required, and integrates to the trace $\tfrac12-\tau+\tfrac23\tau^2$ in Table~\ref{tab:constructions}. Pinning at an interior point thus splits the process into a bridge on $[0,\tau]$ and an independent Brownian motion on $[\tau,1]$, as the Markov property implies. See Figure~\ref{fig:detail_pinned}.
\end{example}

\begin{figure}[tbp]
 \centering
 \includegraphics[width=0.85\textwidth]{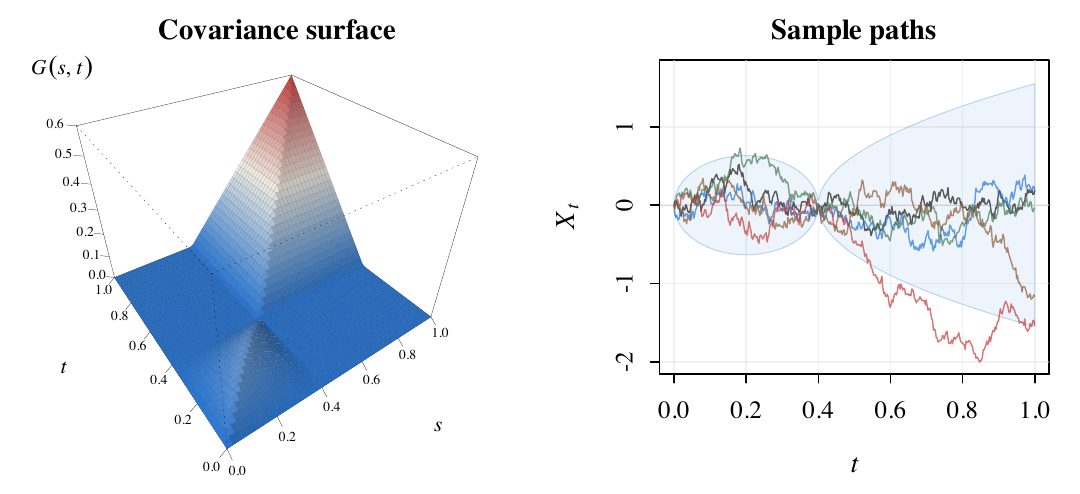}
 \caption{Brownian motion pinned at $\tau=0.4$: covariance surface $G(s,t)$ on $[0,1]^2$ (left) and five sample paths with a $\pm2$-standard-deviation envelope (right). Zero covariance in both off-diagonal blocks expresses independence across the pin. The envelope closes at the original starting point $s=0$ and at the new pin $s=\tau$.}
 \label{fig:detail_pinned}
\end{figure}

\paragraph{Domain splitting.}\label{sec:domain_splitting}

An interior pin is not an endpoint condition for the operator realizations considered here. Conditioning on an interior value nevertheless has a boundary-value interpretation: by Example~\ref{ex:interior}, pinning $W_\tau=0$ produces independent processes on $[0,\tau]$ and $[\tau,1]$. The boundary-value viewpoint reaches the same object by splitting the domain at~$\tau$ and solving a boundary-value problem on each piece, with a Dirichlet condition at~$\tau$ and the original condition retained at the outer end. For $W$ that is Dirichlet--Dirichlet on $[0,\tau]$, giving a bridge, and Dirichlet--Neumann on $[\tau,1]$, giving $s\wedge t-\tau$, a Brownian motion restarted from zero at time~$\tau$. Imposing Dirichlet conditions at both ends of each subinterval would instead give two bridges, a different process. This illustrates the Markov property of Brownian motion: knowing (or forcing) an interior value cuts the process into conditionally independent pieces.

At the level of the Green's function, the split is visible as the vanishing of the cross covariance: for $s\le\tau\le t$, the conditioned covariance $G_0(s,t)=0$, so the Green's function vanishes across the split point. This is the Markov property at the fixed time~$\tau$, expressed through covariance: the two segments are uncorrelated, and hence independent in this Gaussian setting, once $X_\tau$ is fixed.

\subsection{Ties, weighted ties and integral constraints}

\begin{example}[Tied times]\label{ex:tied}
To tie the values at two times, $W_{s_0}=W_{s_1}$ with $s_0<s_1$, take $\ell(W)=W_{s_1}-W_{s_0}$. Then $K_\ell(s) = (s\wedge s_1)-(s\wedge s_0)$ and $V_\ell = s_1-s_0$ (the increment variance), so
\[
 G_0(s,t) = s\wedge t - \frac{[(s\wedge s_1)-(s\wedge s_0)]\,[(t\wedge s_1)-(t\wedge s_0)]}{s_1-s_0}.
\]
\end{example}

\begin{example}[Weighted two-time constraint]\label{ex:weighted}
For $0\le s_0<s_1\le1$, impose $\alpha W_{s_0}+\beta W_{s_1}=0$ by taking $\ell(W)=\alpha W_{s_0}+\beta W_{s_1}$, so $K_\ell(s) = \alpha(s\wedge s_0)+\beta(s\wedge s_1)$ and $V_\ell = \alpha^2 s_0 + \beta^2 s_1 + 2\alpha\beta(s_0\wedge s_1) = s_0(\alpha+\beta)^2+\beta^2(s_1-s_0)$. Substitute these quantities into~\eqref{eq:conditioning} whenever $V_\ell>0$. The only exceptions are $(\alpha,\beta)=(0,0)$ and $\beta=0$ with $s_0=0$; in the latter case $\ell(W)=\alpha W_0$ vanishes identically.
\end{example}

\begin{example}[Integral constraint]\label{ex:integral}
To impose $\int_0^1W_u\,\dd u=0$, take $\ell(W)=\int_0^1W_u\,\dd u$. Then
\[
 K_\ell(s) = \int_0^1 (s\wedge u)\,\dd u = s - \tfrac{1}{2}s^2, \qquad V_\ell = \int_0^1\int_0^1 (u\wedge v)\,\dd u\,\dd v = \tfrac{1}{3},
\]
so $G_0(s,t) = s\wedge t - 3\bigl(s-\tfrac{1}{2}s^2\bigr)\bigl(t-\tfrac{1}{2}t^2\bigr)$. This is integral-conditioned Brownian motion: Brownian motion conditioned to have zero time average. Its variance is $\Var(X_s) = s - 3\bigl(s-\tfrac12 s^2\bigr)^2$, which integrates to the trace $\tfrac1{10}$ recorded in Table~\ref{tab:constructions}. The equivalent kernel $s\wedge t-\tfrac34(s^2-2s)(t^2-2t)$ appears as a Green kernel on a constrained space in~\cite{Raket2021}, Example 2.15. Thus conditioning and a constrained-space construction can reach the same object by different routes. See Figure~\ref{fig:detail_integral}.
\end{example}

\begin{figure}[tbp]
 \centering
 \includegraphics[width=0.85\textwidth]{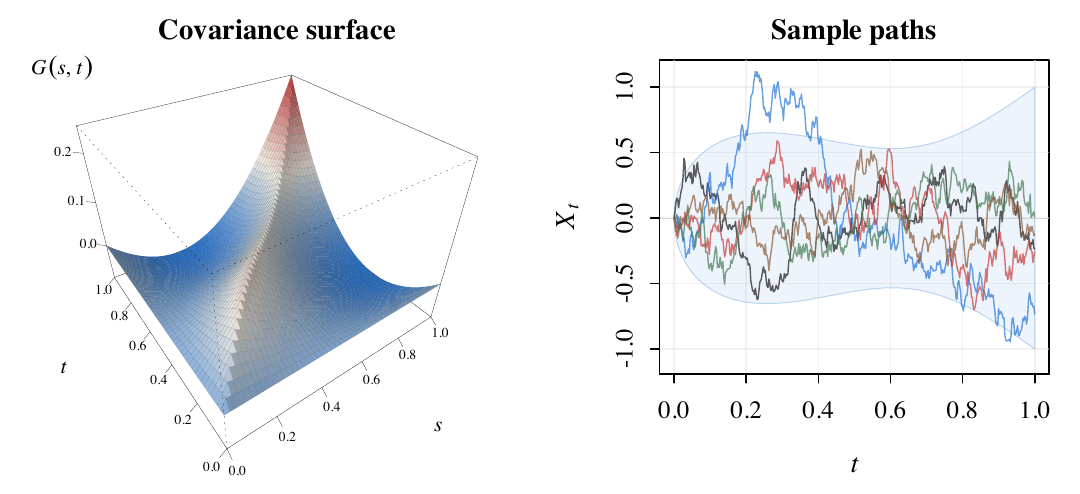}
 \caption{Integral-conditioned Brownian motion: covariance surface $G(s,t)$ on $[0,1]^2$ (left) and five sample paths with a $\pm2$-standard-deviation envelope (right). Every path has zero time average, but no additional value is pinned beyond the original $X_0=0$. The envelope is strictly positive on $(0,1]$, unlike the interior-pinned process, whose envelope also closes at its pin.}
 \label{fig:detail_integral}
\end{figure}

The endpoint pin $W_1=0$ gives the Brownian bridge (Example~\ref{ex:bridge_cond}); an interior pin $W_\tau=0$ splits the path into a bridge and an independent fresh Brownian motion (Example~\ref{ex:interior}). A tie or weighted tie between two times, $W_{s_0}=W_{s_1}$ or $\alpha W_{s_0}+\beta W_{s_1}=0$, gives the corresponding tied process (Examples~\ref{ex:tied}--\ref{ex:weighted}), and $\int_0^1W_u\,\dd u=0$ gives integral-conditioned Brownian motion (Example~\ref{ex:integral}). Derivatives require a different formulation: Brownian paths are not differentiable, so we instead condition on finite-difference quotients of a fixed positive order. Each quotient gives a nonzero rank-one update, but the correction vanishes as the observation window shrinks. That limiting phenomenon is our next subject.

\section{Detectability: which constraints matter}\label{sec:detectability}

A family of constraints can have a vanishing covariance effect as the observation window shrinks. When does this happen? We begin with a calculation for Brownian motion, then extract the general criterion.

\subsection{A shrinking finite-difference constraint}\label{sec:deriv_invisible}

\begin{example}[A symmetric-difference constraint on $W$]\label{ex:deriv_constraint}
Fix $\tau\in(0,1)$ and $0<\varepsilon<\min(\tau,1-\tau)$. Use the symmetric difference quotient $\ell_\varepsilon(W)=\tfrac{1}{2\varepsilon}(W_{\tau+\varepsilon}-W_{\tau-\varepsilon})$ as a derivative surrogate, and condition on $\ell_\varepsilon(W)=0$. Then
\[
 V_{\ell_\varepsilon} = \frac{1}{4\varepsilon^2}\bigl[(\tau+\varepsilon)-2(\tau-\varepsilon)+(\tau-\varepsilon)\bigr] = \frac{1}{2\varepsilon},
\]
while
\[
 K_{\ell_\varepsilon}(s) = \frac{1}{2\varepsilon}\bigl[s\wedge(\tau+\varepsilon) - s\wedge(\tau-\varepsilon)\bigr] \in [0,1]
\]
for \emph{every}~$s$: it is $0$ for $s\le\tau-\varepsilon$, rises affinely across the window, and equals~$1$ for $s\ge\tau+\varepsilon$. Hence
\[
 \sup_s \frac{K_{\ell_\varepsilon}(s)^2}{V_{\ell_\varepsilon}} = 2\varepsilon \;\xrightarrow{\varepsilon\to 0}\; 0,
\]
so the rank-one covariance correction in~\eqref{eq:conditioning} tends uniformly to zero as $\varepsilon\to0$. At each fixed $s>0$, $\Corr\bigl(W_s,\ell_\varepsilon(W)\bigr)=O(\sqrt\varepsilon)$. The surrogate decorrelates from each fixed nondegenerate path value as the window closes; it does not converge to a Brownian derivative. Write $h_\varepsilon=K_{\ell_\varepsilon}/\sqrt{V_{\ell_\varepsilon}}$ and $Z_\varepsilon=\ell_\varepsilon(W)/\sqrt{V_{\ell_\varepsilon}}$. The residual $W_s-h_\varepsilon(s)Z_\varepsilon$ has the conditioned law. Since $Z_\varepsilon$ is standard normal and $\|h_\varepsilon\|_\infty=O(\sqrt\varepsilon)$, this coupling converges uniformly in probability to~$W$. The conditional laws therefore converge weakly to Wiener measure on $C[0,1]$, although every finite-$\varepsilon$ law is supported on a Wiener-null linear constraint and differs from Wiener measure. Dropping the factor $1/(2\varepsilon)$ does not change the conditioning event $\{W_{\tau+\varepsilon}=W_{\tau-\varepsilon}\}$, so it leaves the conclusion unchanged. Yet the observation variance would then tend to~$0$, not~$\infty$. This decorrelation for shrinking difference quotients is noted in~\cite{Raket2021}, Section 4.3, Examples 4.2--4.4, using a one-sided quotient at an endpoint; Example 4.4 there gives an explicit Brownian-bridge limit. Here the quotient is symmetric and centered at an interior time. Theorem~\ref{thm:detect} below isolates the governing criterion, and Corollary~\ref{cor:invisible} extends the calculation to every fixed positive order.
\end{example}

Contrast this with observing a \emph{value}. For $\ell=\delta_\tau$ at an interior time, $K_\ell(s)=s\wedge\tau$ and $V_\ell=\tau$, so the correction $\,(s\wedge\tau)(t\wedge\tau)/\tau$ does not shrink at all: it is the interior pin of Example~\ref{ex:interior}, which splits the path. Both observations concern the same time~$\tau$, but one leaves the covariance unchanged in the limit while the other splits it into independent blocks. What separates them is the correlation with the path values, not the observation variance, which depends on the normalization.

\subsection{The criterion}\label{sec:detect_criterion}

One might guess that a growing observation variance makes a constraint asymptotically invisible. But $V_\ell=\Var(\ell(X))$ depends on the normalization of the observation, not just on the constraint. The events $\{\ell(X)=0\}$ and $\{c\,\ell(X)=0\}$ are identical for every $c\ne 0$, and the rank-one correction in~\eqref{eq:conditioning} is likewise unchanged, but $V_{c\ell}=c^2V_\ell$ can be sent to $0$ or~$\infty$ at will. The invariant quantity is the correlation.

\begin{theorem}[Detectability]\label{thm:detect}
Let $X$ be a zero-mean Gaussian process with covariance~$K$, and let $\ell$ be a linear functional whose value $\ell(X)$ lies in the closed linear span of $\{X_s\}_{s\in[0,1]}$ in $L^2(\mathbb{P})$, with $V_\ell=\Var(\ell(X))<\infty$. Then
\begin{enumerate}
\item[(a)] $V_\ell>0$ if and only if $K_\ell\not\equiv 0$, in which case conditioning on $\ell(X)=0$ changes the covariance by the nonzero rank-one term in~\eqref{eq:conditioning};
\item[(b)] for a family $\{\ell_\varepsilon\}$ of such functionals with $V_{\ell_\varepsilon}>0$,
\begin{equation}\label{eq:detect_corr}
 \lim_{\varepsilon\to 0} G_0^{(\varepsilon)}(s,t) = K(s,t)\ \text{ for all } s,t
 \quad\Longleftrightarrow\quad
 \lim_{\varepsilon\to 0}\frac{K_{\ell_\varepsilon}(s)^2}{V_{\ell_\varepsilon}} = 0\ \text{ for each } s,
\end{equation}
and at every $s$ with $\Var(X_s)>0$ the right-hand condition reads $\Corr\bigl(X_s,\ell_\varepsilon(X)\bigr)\to 0$.
\end{enumerate}
\end{theorem}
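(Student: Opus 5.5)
Part (a) reduces to a Hilbert-space statement in $L^2(\mathbb{P})$. Write $H$ for the closed linear span of $\{X_s\}$, and $Y=\ell(X)\in H$. One direction is immediate. If $V_\ell=0$, then $Y=0$ almost surely, so $K_\ell(s)=\Cov(X_s,Y)=0$ for every~$s$. Conversely, suppose $K_\ell\equiv0$. Then $Y$ is orthogonal to every $X_s$, hence to their closed span~$H$. Since $Y\in H$, $Y$ is orthogonal to itself, so $V_\ell=\langle Y,Y\rangle=0$. Thus $V_\ell>0$ exactly when $K_\ell\not\equiv0$. In that case Theorem~\ref{thm:conditioning} applies, and the correction $K_\ell(s)K_\ell(t)/V_\ell$ is a nonzero kernel: at any $s$ with $K_\ell(s)\ne0$ its diagonal value $K_\ell(s)^2/V_\ell$ is positive. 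The membership hypothesis is used only in the converse, and I will note that it is needed. Without it, a functional independent of $X$ would have $K_\ell\equiv0$ but $V_\ell>0$.

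For part (b) I will work with the correction kernel $D_\varepsilon(s,t)=K_{\ell_\varepsilon}(s)K_{\ell_\varepsilon}(t)/V_{\ell_\varepsilon}$, which satisfies $G_0^{(\varepsilon)}=K-D_\varepsilon$ by \eqref{eq:conditioning}. Pointwise convergence $G_0^{(\varepsilon)}\to K$ is therefore the same as $D_\varepsilon(s,t)\to0$ for all $s,t$. The direction ($\Rightarrow$) follows by taking $t=s$, because $D_\varepsilon(s,s)=K_{\ell_\varepsilon}(s)^2/V_{\ell_\varepsilon}$. For ($\Leftarrow$), factor $D_\varepsilon(s,t)=h_\varepsilon(s)h_\varepsilon(t)$ with $h_\varepsilon=K_{\ell_\varepsilon}/\sqrt{V_{\ell_\varepsilon}}$, as in Example~\ref{ex:deriv_constraint}. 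Then $|D_\varepsilon(s,t)|\le\sqrt{D_\varepsilon(s,s)D_\varepsilon(t,t)}$, which tends to zero when both diagonal terms do. This is just Cauchy--Schwarz for the positive-semidefinite rank-one kernel.

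Finally, for the correlation form, take $s$ with $\Var(X_s)=K(s,s)>0$. Then
\[
\Corr\bigl(X_s,\ell_\varepsilon(X)\bigr)^2=\frac{K_{\ell_\varepsilon}(s)^2}{K(s,s)\,V_{\ell_\varepsilon}},
\]
which is the right-hand quantity in~\eqref{eq:detect_corr} divided by the fixed positive constant $K(s,s)$. So the two limits vanish together. At an $s$ with $K(s,s)=0$, Cauchy--Schwarz gives $|K_{\ell_\varepsilon}(s)|\le\sqrt{K(s,s)V_{\ell_\varepsilon}}=0$, so the condition holds automatically there. This explains why the statement restricts the correlation reformulation to $s$ of positive variance.

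There is no real obstacle here. The only subtle step is the converse in (a), where the closed-span hypothesis is essential. I will also remark that the criterion is invariant under rescaling $\ell\mapsto c\ell$, since $K_{c\ell}^2/V_{c\ell}=K_\ell^2/V_\ell$. This matches the discussion preceding the theorem.
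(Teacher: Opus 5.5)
Your proof is correct and follows the paper's argument essentially step for step. Part (a) uses the orthogonality of $\ell(X)$ to its own closed span, and part (b) uses the diagonal $s=t$ for ($\Rightarrow$), the rank-one factorization with Cauchy--Schwarz for ($\Leftarrow$), and the identity $K_{\ell_\varepsilon}(s)^2/V_{\ell_\varepsilon}=\Corr\bigl(X_s,\ell_\varepsilon(X)\bigr)^2\Var(X_s)$ for the correlation form; the only cosmetic difference is that you handle zero-variance points via $|K_{\ell_\varepsilon}(s)|\le\sqrt{K(s,s)V_{\ell_\varepsilon}}$, where the paper notes directly that $X_s=0$ almost surely.
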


\begin{proof}
(a) If $K_\ell\equiv 0$ then $\ell(X)$ is orthogonal in $L^2(\mathbb{P})$ to every $X_s$, hence to their closed linear span; that span contains $\ell(X)$, so $V_\ell = \langle \ell(X),\ell(X)\rangle = 0$. Conversely $V_\ell=0$ makes $\ell(X)=0$ almost surely and $K_\ell\equiv0$. When $K_\ell\not\equiv0$ the correction $K_\ell(s)K_\ell(t)/V_\ell$ in~\eqref{eq:conditioning} is a nonzero kernel.

(b) The correction is $K_{\ell_\varepsilon}(s)K_{\ell_\varepsilon}(t)/V_{\ell_\varepsilon}$, whose diagonal is $K_{\ell_\varepsilon}(s)^2/V_{\ell_\varepsilon}$; taking $s=t$ gives ``$\Rightarrow$''. For ``$\Leftarrow$'', factorizing the rank-one correction gives
\[
 \frac{|K_{\ell_\varepsilon}(s)K_{\ell_\varepsilon}(t)|}{V_{\ell_\varepsilon}}
 \le \Bigl(\frac{K_{\ell_\varepsilon}(s)^2}{V_{\ell_\varepsilon}}\Bigr)^{1/2}\Bigl(\frac{K_{\ell_\varepsilon}(t)^2}{V_{\ell_\varepsilon}}\Bigr)^{1/2}\longrightarrow 0 .
\]
Finally $K_{\ell_\varepsilon}(s)^2/V_{\ell_\varepsilon} = \Corr\bigl(X_s,\ell_\varepsilon(X)\bigr)^2\,\Var(X_s)$ by definition of the correlation, so for $\Var(X_s)>0$ the two conditions agree; where $\Var(X_s)=0$ we have $X_s=0$ almost surely and $K_{\ell_\varepsilon}(s)=0$, so the condition holds automatically.
\end{proof}

Both sides of~\eqref{eq:detect_corr} are unchanged by $\ell_\varepsilon\mapsto c_\varepsilon\ell_\varepsilon$ for $c_\varepsilon\ne0$. The scale-invariant summary is: \emph{a family of constraints becomes invisible in covariance precisely when every fixed nondegenerate path value decorrelates from the observations}.

\begin{remark}[Hypotheses and scope]\label{rem:detect_scope}
Three qualifications matter. First, the closed-span hypothesis is essential in part~(a). If an external, nondegenerate Gaussian observation $\xi$ is independent of~$X$, its covariance with every $X_s$ is zero despite its positive variance, and conditioning on it has no effect. The closed-span hypothesis is automatic for the functionals used here: finite combinations $\sum_j c_jX_{t_j}$ and their $L^2(\mathbb{P})$ limits, including $\int_0^1X_u\,\dd u$ and the difference quotients above. Second, $V_{\ell_\varepsilon}\to\infty$ is sufficient if $\sup_\varepsilon\|K_{\ell_\varepsilon}\|_\infty<\infty$, but not in general. For second and higher Brownian difference quotients, these cross-covariance functions are not uniformly bounded, so we use the ratio in~\eqref{eq:detect_corr} directly. Third, linearity and the closed-span hypothesis ensure that $\bigl(X_s,\ell(X)\bigr)$ is jointly Gaussian, allowing the rank-one Schur complement~\eqref{eq:conditioning}. We make no analogous claim for nonlinear functionals.
\end{remark}

\begin{corollary}\label{cor:invisible}
For Brownian motion, value constraints $W_\tau=0$ at interior times are detectable. For finite-difference quotients approximating any fixed derivative order $k\ge1$, the covariance correction tends uniformly to zero as the spacing shrinks.
\end{corollary}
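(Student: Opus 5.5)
The value part is immediate from Theorem~\ref{thm:detect}(a) together with Example~\ref{ex:interior}. For $\ell=\delta_\tau$ with $\tau\in(0,1)$ we have $K_\ell(s)=s\wedge\tau\not\equiv0$ and $V_\ell=\tau>0$. The correction $(s\wedge\tau)(t\wedge\tau)/\tau$ is therefore a nonzero kernel, so the constraint is detectable.

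For the finite differences, I would fix a general form of the observation. Take
\[
\ell_\varepsilon(W)=\sum_{j=0}^{k}c_j\,W_{\tau+\varepsilon x_j},
\]
where the nodes $x_0<\dots<x_k$ are fixed and distinct, the coefficients satisfy $\sum_j c_j x_j^i=0$ for $i<k$ and $\sum_j c_j x_j^k=k!$, and the prefactor is $\varepsilon^{-k}$. By scale invariance of~\eqref{eq:detect_corr}, the prefactor $\varepsilon^{-k}$ can be dropped. I will then work with $D_\varepsilon=\sum_j c_jW_{\tau+\varepsilon x_j}$, where $\varepsilon$ is small enough that all nodes lie in $(0,1)$. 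The conditioning event is unchanged, and so is the rank-one correction $K_{D}(s)K_{D}(t)/V_{D}$. It then suffices to bound $\sup_s K_{D_\varepsilon}(s)^2/V_{D_\varepsilon}$ by $O(\varepsilon)$.

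\textbf{Variance.} Rewrite $D_\varepsilon$ through increments. Because $\sum_j c_j=0$ (using $k\ge1$), we have $D_\varepsilon=\sum_{j\ge1}d_j\,(W_{\tau+\varepsilon x_j}-W_{\tau+\varepsilon x_{j-1}})$ with $d_j=\sum_{i\ge j}c_i$. The increments are independent, so
\[
V_{D_\varepsilon}=\varepsilon\sum_{j\ge1}d_j^2(x_j-x_{j-1})=:\varepsilon\,v.
\]
Here $v>0$, because not all $d_j$ vanish: if they all did, every $c_j$ would be zero.

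\textbf{Cross-covariance.} We have $K_{D_\varepsilon}(s)=\sum_j c_j\,\bigl(s\wedge(\tau+\varepsilon x_j)\bigr)$. For $s\le\tau+\varepsilon x_0$ every term equals $s$, so the sum is $s\sum_jc_j=0$. For $s\ge\tau+\varepsilon x_k$ it equals $\sum_jc_j(\tau+\varepsilon x_j)=\varepsilon\sum_jc_jx_j$. This is zero when $k\ge2$ and bounded by $\varepsilon\,|\sum_j c_j x_j|$ when $k=1$. Inside the window each term differs from $s$ or from $\tau+\varepsilon x_j$ by at most $\varepsilon(x_k-x_0)$, so $|K_{D_\varepsilon}(s)|\le\varepsilon\,C$ with $C=\sum_j|c_j|(x_k-x_0)$, uniformly in $s$. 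Combining the two bounds gives
\[
\sup_{s,t}\frac{|K_{D_\varepsilon}(s)K_{D_\varepsilon}(t)|}{V_{D_\varepsilon}}\le\frac{C^2\varepsilon^2}{v\,\varepsilon}=O(\varepsilon)\to0 .
\]
This is the claimed uniform vanishing, and it recovers Example~\ref{ex:deriv_constraint} when $k=1$.

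\textbf{Main obstacle.} The delicate step is keeping the argument scale-invariant. The unnormalized cross-covariance $\varepsilon^{-k}K_{D_\varepsilon}$ is not uniformly bounded for $k\ge2$ (Remark~\ref{rem:detect_scope}), so I deliberately compare $K^2$ with $V$ at the same normalization rather than arguing from $V\to\infty$. The only genuine input is that $V_{D_\varepsilon}$ is exactly of order $\varepsilon$, with $v>0$ independent of $\varepsilon$, while $K_{D_\varepsilon}$ is $O(\varepsilon)$. Both facts follow from $\sum_j c_j=0$ and the increment representation, and they hold for any fixed stencil. One-sided stencils at an endpoint are handled identically.
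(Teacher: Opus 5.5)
Your proof is correct and is essentially the paper's argument: the increment representation gives an observation variance of exact order $\varepsilon$ at your normalization (the paper's $\binom{2k-2}{k-1}\varepsilon^{1-2k}$ before rescaling), and the zero coefficient sum makes the cross-covariance vanish left of the stencil, stay constant to its right and be $O(\varepsilon)$ in between, so the correction is uniformly $O(\varepsilon)$. You work with an arbitrary fixed $(k+1)$-point stencil, which also covers the symmetric quotient of Example~\ref{ex:deriv_constraint}, and bound $\|K_{D_\varepsilon}\|_\infty$ by $C\varepsilon$ after subtracting $s\sum_j c_j=0$; the paper instead specializes to the forward difference and computes the maximum exactly at a node.
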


\begin{proof}
For a value constraint, $\ell=\delta_\tau$ with $\tau\in(0,1]$ gives $K_\ell(s)=s\wedge\tau\not\equiv 0$, so Theorem~\ref{thm:detect}(a) applies and the covariance changes.

At an interior time~$\tau$, fix an order $k\ge1$ and let $\ell_\varepsilon$ be the $k$th forward difference quotient, with $\varepsilon>0$ small enough that $\tau+k\varepsilon\le1$: 
\[
 \ell_\varepsilon(W) = \varepsilon^{-k}\sum_{j=0}^{k}(-1)^{k-j}\binom{k}{j} W_{\tau+j\varepsilon} .
\]
Rewriting it in the independent increments $\xi_i = W_{\tau+i\varepsilon}-W_{\tau+(i-1)\varepsilon}\sim N(0,\varepsilon)$ gives $\ell_\varepsilon(W)=\varepsilon^{-k}\sum_{i=1}^{k}(-1)^{k-i}\binom{k-1}{i-1}\xi_i$, so by Vandermonde's identity
\begin{equation}\label{eq:kth_quotient_var}
 V_{\ell_\varepsilon} = \varepsilon^{1-2k}\sum_{i=1}^{k}\binom{k-1}{i-1}^{2} = \binom{2k-2}{k-1}\,\varepsilon^{1-2k} .
\end{equation}
Since $\sum_j(-1)^{k-j}\binom kj=0$ for $k\ge1$, the cross-covariance $K_{\ell_\varepsilon}(s)=\varepsilon^{-k}\sum_j(-1)^{k-j}\binom kj\,\bigl(s\wedge(\tau+j\varepsilon)\bigr)$ vanishes for $s\le\tau$, is affine between consecutive nodes, and is constant for $s\ge\tau+k\varepsilon$. Its absolute maximum is therefore attained at a node. At $s=\tau+m\varepsilon$, its value is $\varepsilon^{1-k}\sum_j(-1)^{k-j}\binom kj\,(m\wedge j)$. Hence $\|K_{\ell_\varepsilon}\|_\infty=c_k\,\varepsilon^{1-k}$, where $c_k$ is independent of $\varepsilon$ and~$\tau$, and
\[
 \sup_s \frac{K_{\ell_\varepsilon}(s)^2}{V_{\ell_\varepsilon}} = \frac{c_k^2}{\binom{2k-2}{k-1}}\,\varepsilon \;\longrightarrow\; 0 .
\]
Thus every fixed positive order has a uniformly vanishing covariance correction. For each fixed $s>0$ and~$k$, the correlation is $O(\sqrt\varepsilon)$. For $k\ge2$, $\|K_{\ell_\varepsilon}\|_\infty=c_k\varepsilon^{1-k}$ diverges, so the additional boundedness assumption in Remark~\ref{rem:detect_scope} does not apply.
\end{proof}

\begin{remark}[A Neumann condition is not a derivative constraint]\label{rem:neumann_vs_derivative}
Corollary~\ref{cor:invisible} may seem to conflict with~\eqref{eq:BM_green_intro}. For each interior source point $t\in(0,1)$, the Brownian covariance $s\wedge t$ satisfies the Neumann condition $\partial_sK(s,t)|_{s=1}=0$. Yet conditioning on a shrinking finite-difference quotient leaves the original covariance in the limit. There is no contradiction: the derivatives concern different objects.

First, writing ``$X'_\tau=0$'' for a Brownian path does not specify a canonical Gaussian conditional law. The continuous functions differentiable at~$\tau$ with derivative zero form a Borel subset of $C[0,1]$, but this set has Wiener measure zero. More importantly, there is no Brownian derivative random variable to use in the finite-variance Gaussian formula of Section~\ref{sec:recentring_conditioning}. We can instead condition on a difference quotient. In Example~\ref{ex:deriv_constraint}, its variance is $1/(2\varepsilon)$ and its cross-covariance is uniformly bounded, so the covariance correction vanishes as the window closes. At every fixed $\varepsilon>0$ conditioning changes the covariance, but the change vanishes as $\varepsilon\to0$.

Second, the Neumann condition belongs to the operator domain, not to its sample paths. For the Brownian operator, $\operatorname{dom}(\mathcal{L})$ is the range of the covariance operator, and its functions have derivative zero at $s=1$. The kernel $s\wedge t$ has a diagonal kink, but applying the covariance operator to an $L^2$ function produces a function with a well-defined endpoint derivative. Let $g\in L^2[0,1]$ and set $Y=\int_0^1 g(u)\,W_u\,\dd u$. Then $\Cov(W_s,Y) = \int_0^1 (s\wedge u)\,g(u)\,\dd u$, whose derivative in~$s$ is $\int_s^1 g(u)\,\dd u$, so
\begin{equation}\label{eq:neumann_prob}
 \partial_s \Cov(W_s,Y)\big|_{s=1} = 0 \qquad\text{for every } g\in L^2[0,1] .
\end{equation}
Equivalently, $\Cov(W_1-W_{1-h},\,Y) = \int_{1-h}^{1}g(u)(u-1+h)\,\dd u = O(h^{3/2}) = o(h)$ by Cauchy--Schwarz: the covariance between a short terminal increment and any such integrated linear observation is $o(h)$. This is the second-moment meaning of ``no flux at $s=1$''.

Thus the Neumann derivative acts on the deterministic function $s\mapsto\Cov(X_s,Y)$ for these integrated observations; a literal derivative constraint would act on the random Brownian path. The operator ranges make the distinction precise: the Cameron--Martin space, the range of $\mathcal{L}^{-1/2}$, includes smooth functions with $f'(1)\ne0$, while the Neumann condition holds throughout the range of $\mathcal{L}^{-1}$. Operations~1 and~3 in Section~\ref{sec:two_mechanisms} therefore cannot be interchanged in this way: conditioning on a Brownian path derivative is not an alternative formulation of the operator's Neumann condition.
\end{remark}

\subsection{A smoothness hierarchy}

Path smoothness determines the hierarchy. Let $X^{(m)}$ be $m$-fold integrated Brownian motion, $X_t^{(m)}=\int_0^tX_u^{(m-1)}\,\dd u$ with $X^{(0)}=W$. Almost surely its paths are $C^m$, and its $m$th derivative is the continuous, nowhere differentiable process~$W$. For $m\ge1$, the covariance kernels correspond to polyharmonic operators $(-1)^{m+1}\dd^{2(m+1)}/\dd s^{2(m+1)}$, not the Laplacian; they appear here to illustrate this hierarchy. For $k\le m$, the $k$th derivative is the finite-variance process $X^{(m-k)}$. The next derivative is white noise only in the distributional sense, not a pointwise observation. Table~\ref{tab:detect} therefore separates two cases: constraints on values and derivatives through order~$m$ are detectable at interior times, whereas the covariance effect of finite-difference constraints of higher fixed order vanishes as their spacing shrinks. At an endpoint the stencil must be one-sided. At $s=0$, values and derivatives through order~$m$ already vanish by construction.

The detectable case is immediate at an interior time: for $k\le m$, the $k$th derivative is $X^{(m-k)}$, so the difference quotients converge to a finite-variance functional with $K_\ell\not\equiv0$. For higher orders, the Wiener representation $X^{(m)}_t=\int_0^t\frac{(t-u)^m}{m!}\,\dd W_u$ supplies the analogue of the estimate in Corollary~\ref{cor:invisible}. For a $k$th forward difference quotient at spacing $\varepsilon$ with $k>m$, the polynomial $(t-u)^m/m!$ has degree $m<k$ in~$t$, so the difference annihilates every contribution from $u<\tau$ exactly; what survives is supported on an interval of length $k\varepsilon$ with amplitude of order $\varepsilon^{m-k}$. Hence $V_{\ell_\varepsilon}=C_{m,k}\,\varepsilon^{2m-2k+1}$ with $C_{m,k}>0$ and $\|K_{\ell_\varepsilon}\|_\infty=O(\varepsilon^{m-k+1})$, so $\sup_s K_{\ell_\varepsilon}(s)^2/V_{\ell_\varepsilon}=O(\varepsilon)$. Thus the covariance correction is uniformly $O(\varepsilon)$, as for Brownian motion. The corresponding correlation rates apply at fixed times with positive variance, not uniformly near times where $\Var(X^{(m)}_s)$ tends to zero. At an endpoint, use a one-sided stencil; values and derivatives through order~$m$ at $s=0$ are already fixed by construction.

\begin{table}[t]
\centering
\caption{Detectability at a fixed interior time, classified by almost-sure path regularity. ``BM'' denotes Brownian motion. For derivative orders beyond the path's regularity, ``invisible'' means that conditioning a finite-difference quotient to zero has a vanishing covariance effect as its spacing shrinks, not that a nonexistent pointwise derivative can be conditioned on.}\label{tab:detect}
\small
\begin{tabular}{lccc}
\toprule
Process & Path regularity & Value $f(\tau)=0$ & Derivative $f^{(k)}(\tau)=0$ \\
\midrule
Brownian motion $X^{(0)}$ & $C^0$, nowhere diff.\ & detectable & invisible for all $k\ge 1$ \\
Integrated BM $X^{(1)}$ & $C^1$ & detectable & detectable ($k=1$); invisible ($k\ge 2$) \\
$m$-fold integrated BM $X^{(m)}$ & $C^m$ & detectable & detectable ($k\le m$); invisible ($k>m$) \\
\bottomrule
\end{tabular}
\end{table}

\section{Boundary conditions versus constraints}\label{sec:bc_vs_constraints}

We now compare the three operations of Section~\ref{sec:two_mechanisms}, focusing on what happens when a condition is added to an already specified problem.

\subsection{Counting conditions}\label{sec:counting}

The homogeneous equation for a second-order operator has a two-dimensional solution space. Two independent homogeneous endpoint conditions specify a realization; fewer generally leave nonuniqueness, while additional independent conditions generally overdetermine the boundary-value problem. This count alone guarantees neither self-adjointness nor invertibility: the boundary form must vanish, and the null space must be trivial for an ordinary inverse to exist. Proposition~\ref{prop:bulk} illustrates why merely counting conditions is insufficient. In the degenerate cases, two endpoint conditions still leave a null space, so the balanced normalization is an additional condition selecting a unique solution after the source has been corrected. By contrast, a third condition on a non-degenerate problem may overdetermine it as a boundary-value problem yet define an ordinary rank-one covariance update through~\eqref{eq:conditioning}.

\subsection{Overconstraining: boundary-value problem versus conditioning}\label{sec:overconstraining}

An additional condition therefore has different meanings in the two constructions. As an operator-domain restriction it may overdetermine the boundary-value problem; as Gaussian conditioning it changes the covariance according to~\eqref{eq:conditioning}. For a shrinking family of observations, the limiting change depends on detectability. Brownian motion with the additional condition $f'(0)=0$ gives a sharp example: the boundary-value problem is overdetermined, whereas conditioning on the corresponding vanishing-scale difference quotients has no limiting effect, by Corollary~\ref{cor:invisible}. In a degenerate problem, a pin can instead replace the balanced normalization. Comparing the resulting recentering with conditioning is the subject of the rest of this section.

\subsection{An interior pin in place of the balanced condition}\label{sec:interior_pin}

The balanced condition~\eqref{eq:zeromean} selects a unique kernel for a degenerate operator. Another possibility is a point condition, $f(\tau)=0$ for some $\tau\in[0,1]$, since the only constant vanishing at~$\tau$ is zero. The pin removes the ambiguity, but it does not by itself resolve the source obstruction. This distinction makes it a useful test case for comparing recentering and conditioning.

\paragraph{Pinning repairs uniqueness, not solvability.}
An interior condition does not remove the obstruction of Lemma~\ref{lem:fredholm}. Simply imposing $u(\tau)=0$ restricts the original $\operatorname{dom}(\mathcal{L})$ and cannot enlarge the range to admit the uncorrected source $\delta(\cdot-t)$. A source correction is still needed. With the correction in~\eqref{eq:corrected}, solving $\mathcal{L}u=\delta(\cdot-t)-1$ under $f'(0)=f'(1)=0$ and $u(\tau)=0$ gives $G(s,t)-G(\tau,t)$, which is not symmetric. The symmetric recentered kernel below instead uses the dipole source $\delta(\cdot-t)-\delta(\cdot-\tau)$; the kernel's integral in the first argument generally depends on~$t$, so the kernel is not balanced. In the discretization of Remark~\ref{rem:fem_exact} with Neumann conditions at both ends and $n=200$, the stiffness matrix has rank $200$ of $201$; appending the row $e_\tau^\top$ gives full column rank.

\paragraph{The pinned kernel.}
Two kinds of kernel modification should be distinguished. Symmetric kernels satisfying the corrected equation~\eqref{eq:corrected} together with~$\mathcal{B}$ differ only by an additive constant. The wider algebraic family $G+c\otimes\mathbf{1}+\mathbf{1}\otimes c$ need not satisfy~\eqref{eq:corrected} pointwise. Part~(b) below shows that all these modifications disappear under double-centering. Interpreting them as $\{1\}$-inverses satisfying $\mathcal{L}H\mathcal{L}=\mathcal{L}$ requires separate domain and range conditions; the covariance identity itself needs no such operator interpretation.

Operation~2 gives the pinned kernel directly: subtract $X_\tau$ from the path. This is recentering with $\ell=\delta_\tau$ in place of $\ell=\int_0^1$, so its covariance follows from~\eqref{eq:recentring}. The next proposition records which properties survive.

\begin{proposition}[The pinned kernel]\label{prop:pinned}
Let $(\mathcal{L},\mathcal{B})$ be either degenerate operator of Section~\ref{sec:degenerate}, let $G$ be its balanced Green's function~\eqref{eq:periodic} or~\eqref{eq:NN}, let $X$ be the corresponding process, and fix $\tau\in[0,1]$. Put
\begin{equation}\label{eq:pinned}
 G_\tau(s,t) = G(s,t) - G(s,\tau) - G(\tau,t) + G(\tau,\tau) .
\end{equation}
\begin{enumerate}
\item[\emph{(a)}] $G_\tau(s,t) = \Cov\bigl(X_s-X_\tau,\;X_t-X_\tau\bigr)$; hence $G_\tau$ is symmetric and positive semidefinite, and $G_\tau(\tau,t)=0$ for every~$t$.
\item[\emph{(b)}] $G_\tau$ is invariant under $G\mapsto G+c\otimes\mathbf{1}+\mathbf{1}\otimes c$, for every function~$c$.
\item[\emph{(c)}] For $\tau,t\in(0,1)$ the function $G_\tau(\cdot,t)$ satisfies the same boundary conditions~$\mathcal{B}$ as $G(\cdot,t)$.
\item[\emph{(d)}] $\int_0^1 G_\tau(s,t)\,\dd s = G(\tau,\tau)-G(\tau,t)$. So $G_\tau$ is balanced if and only if $G(\tau,\cdot)$ is constant, which holds for neither degenerate operator.
\end{enumerate}
\end{proposition}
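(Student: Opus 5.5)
I will take the four parts in order. The only inputs are the recentering identity~\eqref{eq:recentring}, the explicit kernels~\eqref{eq:periodic} and~\eqref{eq:NN}, and the balanced condition~\eqref{eq:zeromean}.

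\emph{(a)} Apply~\eqref{eq:recentring} with $\ell=\delta_\tau$. Here $K_\ell(s)=G(s,\tau)$ and $V_\ell=G(\tau,\tau)$, and substituting these reproduces~\eqref{eq:pinned}. The process $X_s-X_\tau$ is a linear image of the Gaussian process $X$, whose covariance is $G$ by Theorems~\ref{thm:periodic}--\ref{thm:NN} and Proposition~\ref{prop:centred_constructions}. Hence $G_\tau$ is a covariance, and so it is symmetric and positive semidefinite. Setting $s=\tau$ gives $G(\tau,t)-G(\tau,\tau)-G(\tau,t)+G(\tau,\tau)=0$. Equivalently, $X_\tau-X_\tau=0$ almost surely.

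\emph{(b)} This is a direct algebraic check. Put $\Delta(s,t)=c(s)+c(t)$. Then
\[
\Delta(s,t)-\Delta(s,\tau)-\Delta(\tau,t)+\Delta(\tau,\tau)=0,
\]
because every term cancels. Since~\eqref{eq:pinned} is linear in $G$, the perturbation drops out.

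\emph{(c)} Fix $t$. The function $G_\tau(\cdot,t)$ equals $G(\cdot,t)-G(\cdot,\tau)$ plus a constant in $s$. The conditions $\mathcal{B}$ (periodic, or Neumann--Neumann) are homogeneous linear and are satisfied by constants. So it suffices that $G(\cdot,t)$ and $G(\cdot,\tau)$ both satisfy $\mathcal{B}$, which Theorems~\ref{thm:periodic} and~\ref{thm:NN} give for interior source points. This is why $\tau,t\in(0,1)$ is required.

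This step is where I expect the only subtlety. At $\tau=0$ or $1$ the section $G(\cdot,\tau)$ has its kink at the endpoint. For Neumann--Neumann, $\partial_sG_\NN(s,0)=s-1$, which fails $f'(0)=0$. The interior hypothesis is therefore essential, and I will note this. I would check it directly from the explicit formulas to be safe: $\partial_s G_\NN(s,\tau)=s-\mathbf 1_{s>\tau}$ vanishes at $s=0,1$, and the periodic kernel is a function of $|s-\tau|$ invariant under $x\mapsto 1-x$.

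\emph{(d)} Integrate~\eqref{eq:pinned} in $s$. By~\eqref{eq:zeromean} and symmetry, $\int_0^1G(s,t)\,\dd s=0$ and $\int_0^1G(s,\tau)\,\dd s=0$. This leaves $G(\tau,\tau)-G(\tau,t)$, which vanishes for all $t$ iff $G(\tau,\cdot)$ is constant. To see that it is not constant, use the formulas:
\begin{itemize}
\item Periodic: $G_{\mathrm{per}}(\tau,t)=\tfrac12B_2(|t-\tau|)$ varies in $t$. For example, its values at $t=\tau$ and at $|t-\tau|=\tfrac12$ are $\tfrac1{12}$ and $-\tfrac1{24}$.
\item Neumann--Neumann: $G_\NN(\tau,t)$ contains $\tfrac12t^2-\tau\vee t$, which is nonconstant, as already used in Proposition~\ref{prop:k_range}(c).
\end{itemize}
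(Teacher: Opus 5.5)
Your proposal is correct and follows the paper's proof essentially step for step. You use bilinearity (via the recentering identity with $\ell=\delta_\tau$) for (a), direct cancellation for (b), linearity together with the interior-source boundary conditions and constants for (c), and the balanced condition plus explicit nonconstancy of $G(\tau,\cdot)$ for (d); your extra checks (the derivative $s-\mathbf{1}_{s>\tau}$, the endpoint failure at $\tau=0$ matching the coincident-kink remark, and the values $\tfrac1{12}$ versus $-\tfrac1{24}$) are all accurate.
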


\begin{proof}
(a) Expanding $\Cov(X_s-X_\tau,X_t-X_\tau)$ by bilinearity gives the covariance identity and hence positive semidefiniteness. Setting $s=\tau$ in~\eqref{eq:pinned} gives $G_\tau(\tau,t)=0$.
(b) Substituting $G(s,t)+c(s)+c(t)$ into~\eqref{eq:pinned} adds $\{c(s)+c(t)\}-\{c(s)+c(\tau)\}-\{c(\tau)+c(t)\}+2c(\tau)=0$.
(c) As functions of~$s$, the terms in~\eqref{eq:pinned} are $G(\cdot,t)$, $-G(\cdot,\tau)$ and constants. The first two satisfy~$\mathcal{B}$ because $t,\tau$ are interior; constants satisfy~$\mathcal{B}$ in both degenerate cases. The claim follows by linearity.
(d) The first two terms integrate to zero by~\eqref{eq:zeromean}. For the periodic kernel, $G(\tau,t)=\tfrac12 B_2(|\tau-t|)$ is nonconstant in~$t$ for every~$\tau$, and likewise $G_\NN(\tau,t)=\tfrac12(\tau^2+t^2)-\tau\vee t+\tfrac13$.
\end{proof}

\paragraph{Recentering at $\tau$ is not conditioning on $X_\tau=0$.}
Conditioning also produces a kernel that vanishes at~$\tau$. Taking $\ell(X)=X_\tau$ in Theorem~\ref{thm:conditioning} gives
\begin{equation}\label{eq:pinned_cond}
 G_\tau^{\mathrm{cond}}(s,t) = G(s,t) - \frac{G(s,\tau)\,G(\tau,t)}{G(\tau,\tau)} ,
\end{equation}
which is symmetric, positive semidefinite and zero at $s=\tau$, but differs from the recentered kernel. Subtracting~\eqref{eq:pinned_cond} from~\eqref{eq:pinned} gives
\begin{equation}\label{eq:pin_gap}
 G_\tau(s,t) - G_\tau^{\mathrm{cond}}(s,t) = \frac{\bigl\{G(\tau,\tau)-G(s,\tau)\bigr\}\,\bigl\{G(\tau,\tau)-G(\tau,t)\bigr\}}{G(\tau,\tau)} ,
\end{equation}
a positive-semidefinite rank-one kernel. It vanishes identically exactly when $G(\tau,\cdot)$ is constant, a case excluded by Proposition~\ref{prop:pinned}(d). The recentered process therefore has at least as much variance at each time as the conditioned process, with strict inequality wherever $G(s,\tau)\ne G(\tau,\tau)$. For these kernels, the gap is nonnegative pointwise but vanishes whenever $s=\tau$ or $t=\tau$.

The constructions also differ in their response to an independent constant offset. Double-centering~\eqref{eq:pinned} is invariant under $G\mapsto G+c\,\mathbf{1}\otimes\mathbf{1}$ by Proposition~\ref{prop:pinned}(b), whereas conditioning depends on the starting covariance. For $c\ge0$, write $r=G(\tau,\tau)$ and let $G_{\tau,c}^{\mathrm{cond}}$ denote the conditioned kernel of $G+c\,\mathbf{1}\otimes\mathbf{1}$. Direct substitution gives
\[
 G_{\tau,c}^{\mathrm{cond}}(s,t) = G_\tau^{\mathrm{cond}}(s,t) + \frac{c}{c+r}\,\bigl\{G_\tau(s,t)-G_\tau^{\mathrm{cond}}(s,t)\bigr\} ,
\]
so the change is the fraction $c/(c+r)$ of the gap~\eqref{eq:pin_gap} and approaches that gap as $c\to\infty$. The recentered kernel~\eqref{eq:pinned} remains unchanged. Starting from the balanced covariance, recentering replaces zero time average with a pin, whereas conditioning adds the pin while preserving zero time average. Their different covariances reflect these different operations.

The three operations of Section~\ref{sec:two_mechanisms} thus need not give the same law. Boundary conditions and conditioning both produce the Brownian bridge in Example~\ref{ex:bridge_cond}, while Remark~\ref{rem:neumann_vs_derivative} explains why a covariance Neumann condition is not obtained by limiting derivative conditioning. In the present comparison, a balanced covariance has already been chosen for the degenerate realization. Both recentering and conditioning then give positive-semidefinite kernels vanishing at~$\tau$, but~\eqref{eq:pin_gap} measures their difference.

\begin{remark}[When the two routes agree]\label{rem:slepian_conditioning}
Although the interior pin separates recentering from conditioning, Example~\ref{ex:slepian} gives a case where they agree. Condition Slepian Brownian motion on $\ell(X)=\tfrac12(X_0+X_1)=0$. Here $K_\ell(s)=\tfrac12\bigl\{G(s,0)+G(s,1)\bigr\}=\tfrac14$ for every~$s$ and $V_\ell=\tfrac14$, so~\eqref{eq:conditioning} subtracts the constant $\tfrac14$ and returns $G_{-1}$. Thus the anti-periodic process is Slepian Brownian motion with its endpoint average conditioned to zero. Since $K_\ell\equiv V_\ell$, this is the exceptional case in Proposition~\ref{prop:diffuse}: the two routes agree at finite offset variance, not just in the diffuse limit.

The agreement extends to the whole family~\eqref{eq:slepian_family}. Since $G_{-1}(s,0)+G_{-1}(s,1)=0$ for every~$s$, only the additive constant contributes to $K_\ell$: for $G_{-1}+v$, we have $K_\ell\equiv v$ and $V_\ell=v$. Conditioning therefore returns $G_{-1}$ for every $v>0$, taking a one-parameter family of distinct operators to a single covariance. At $v=0$ the condition already holds almost surely: $\Corr(X_0,X_1)=-1$, so $X_1=-X_0$ and $V_\ell=0$. Agreement in this family does not make recentering and conditioning interchangeable in general.
\end{remark}

\paragraph{The pinned process.}
The pin replaces, rather than preserves, the zero-time-average normalization. Proposition~\ref{prop:pinned}(d) gives $\int_0^1 G_\tau(s,t)\,\dd s = G(\tau,\tau)-G(\tau,t)$. For the Neumann--Neumann kernel, this yields
\[
 \max_t\Bigl|\int_0^1 G_\tau(s,t)\,\dd s\Bigr| \;=\; \tfrac12\max\bigl\{\tau^2,(1-\tau)^2\bigr\} ,
\]
with the maximum attained at $t=1$ for $\tau<\tfrac12$ and at $t=0$ for $\tau>\tfrac12$. Its minimum over~$\tau$ is $\tfrac18$, attained at $\tau=\tfrac12$, so it never vanishes. For the periodic kernel, $G(\tau,\tau)-G(\tau,t) = \tfrac12|\tau-t|(1-|\tau-t|)$ has maximum $\tfrac18$ for every~$\tau$, attained wherever $|t-\tau|=\tfrac12$. These nonzero integrals distinguish each pinned process from any rescaling of its balanced counterpart. Proposition~\ref{prop:pinned}(a) and~(c) identifies what does survive: symmetry, positive semidefiniteness and, for interior $t,\tau$, the original covariance boundary conditions. The identity $G_\tau(\tau,\cdot)=0$ records the deterministic pin. In particular, an interior pin preserves periodicity in the periodic case and both Neumann conditions in the Neumann--Neumann case, for interior source points.

\paragraph{What the pin does to each degenerate beast.}
Proposition~\ref{prop:pinned}(a) identifies the pinned beasts as increment processes about~$\tau$. For the Neumann--Neumann process, $X_s-X_\tau=W_s-W_\tau$ because the balanced centering cancels. Its covariance is block diagonal: $\Cov(W_s-W_\tau,W_t-W_\tau)=0$ whenever $s<\tau<t$. On $[0,\tau]$ the kernel is $\tau-s\vee t$, that of time-reversed Brownian motion; on $[\tau,1]$ it is $s\wedge t-\tau$, that of Brownian motion restarted at~$\tau$. The pin splits this beast into two independent one-sided relatives, giving another instance of the domain-splitting structure in Section~\ref{sec:domain_splitting}, here obtained by recentering. For $0<\tau<1$, the corresponding Neumann--Dirichlet / Dirichlet--Neumann precision operator has a piecewise $H^2$ domain: Neumann conditions at the outer endpoints, zero values at~$\tau$ from both sides, and no derivative matching there. This is not the original global $H^2$ domain with an added pin.

The periodic process does not split: its increment is $B_s-B_\tau$ for the bridge~$B$, and its cross block is $-(s-\tau)(t-\tau)$, strictly positive whenever $s<\tau<t$.

\begin{remark}[The coincident-kink exception]\label{rem:coincident_kink}
The endpoint exclusion in Proposition~\ref{prop:pinned}(c) is essential. For the Neumann--Neumann kernel, \eqref{eq:pinned} gives $\partial_s G_\tau(0,t) = \partial_s G(0,t)-\partial_s G(0,\tau)$. Each term vanishes when its second argument is interior. When that argument is~$0$, the kink lies on the boundary and the one-sided derivative is $-1$, not~$0$. Thus, for an interior pin, $\partial_s G_\tau(0,t)=0$ at every interior~$t$ but $\partial_s G_\tau(0,0)=-1$; for an endpoint pin $\tau=0$, we instead have $\partial_s G_0(0,t)=+1$ at every interior~$t$. The right endpoint has the corresponding reflected behavior. The kink's coincidence with the boundary explains why a pin at $\tau\in\{0,1\}$ behaves differently from an interior pin; it does not contradict the boundary condition stated for interior sources. Parts~(a), (b) and~(d) hold for every $\tau\in[0,1]$.
\end{remark}

\subsection{Choosing a centering}\label{sec:centring}

The endpoint exception in Remark~\ref{rem:coincident_kink} is useful: changing the covariance boundary condition at one end turns a degenerate beast into a familiar one. Mean-centering and point-centering are the same path operation with different functionals, $\ell=\int_0^1$ and $\ell=\delta_\tau$. Both give a kernel of the form $\Pi K\Pi^*$, where $\Pi f = f-\ell(f)\mathbf{1}$.

Choosing~$\ell$ fixes both the source correction and the side condition. If $w$ is the measure representing~$\ell$, then $\Pi^*\delta_t = \delta(\cdot-t)-w$: this gives~\eqref{eq:corrected} for $\ell=\int_0^1$ and the dipole $\delta(\cdot-t)-\delta(\cdot-\tau)$ for $\ell=\delta_\tau$. Here the two projections act on different natural spaces. Mean-centering is an orthogonal projection on $L^2$; point-centering is an idempotent map on continuous functions, and $\Pi^*$ denotes its dual action on measures, not an $L^2$ adjoint. Point evaluation is not $L^2$-bounded. Once the starting kernel and centering are fixed, the projected covariance has no freely chosen additive constant, consistent with Proposition~\ref{prop:k_range}.

\begin{proposition}[Point-centering at an endpoint]\label{prop:centring_identities}
Let $G_\NN$ and $G_{\mathrm{per}}$ be the balanced kernels~\eqref{eq:NN} and~\eqref{eq:periodic}. With $\tau=0$ in~\eqref{eq:pinned},
\[
 G_\NN(s,t)-G_\NN(s,0)-G_\NN(0,t)+G_\NN(0,0) = s\wedge t,
\]
\[
 G_{\mathrm{per}}(s,t)-G_{\mathrm{per}}(s,0)-G_{\mathrm{per}}(0,t)+G_{\mathrm{per}}(0,0) = s\wedge t-st .
\]
The free--free process pinned at its left end is Brownian motion; the loop pinned at a point is a Brownian bridge.
\end{proposition}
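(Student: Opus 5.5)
The plan is a direct algebraic verification of the two identities, followed by a probabilistic reading that uses Proposition~\ref{prop:pinned}(a). No operator theory is needed. For each degenerate kernel I evaluate the three correction terms in~\eqref{eq:pinned} at $\tau=0$ from the closed forms~\eqref{eq:NN} and~\eqref{eq:periodic}, then simplify with $s\vee t=s+t-s\wedge t$ and $|s-t|=s+t-2(s\wedge t)$.

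For the Neumann--Neumann kernel, $G_\NN(s,0)=\tfrac12 s^2-s+\tfrac13$, since $s\vee0=s$. Symmetrically, $G_\NN(0,t)=\tfrac12t^2-t+\tfrac13$, and $G_\NN(0,0)=\tfrac13$. Substituting into~\eqref{eq:pinned}, the quadratic terms $\tfrac12(s^2+t^2)$ cancel, and so do the constants $\tfrac13$. What remains is $-s\vee t+s+t=s\wedge t$. A shorter route uses the pathwise construction. By Theorem~\ref{thm:NN} and Proposition~\ref{prop:centred_constructions}, $X_s=W_s-\bar W$ has covariance $G_\NN$. Hence $X_s-X_0=W_s-W_0=W_s$, and Proposition~\ref{prop:pinned}(a) then gives covariance $s\wedge t$. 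I would present both routes, because the second explains the identity.

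For the periodic kernel, $G_{\mathrm{per}}(s,0)=\tfrac12s^2-\tfrac12s+\tfrac1{12}$, since $|s-0|=s$. By symmetry $G_{\mathrm{per}}(0,t)=\tfrac12t^2-\tfrac12t+\tfrac1{12}$, and $G_{\mathrm{per}}(0,0)=\tfrac1{12}$. The difference in~\eqref{eq:pinned} is then $\tfrac12(s-t)^2-\tfrac12|s-t|-\tfrac12(s^2+t^2)+\tfrac12(s+t)$. The quadratics reduce to $-st$, and $\tfrac12(s+t-|s-t|)=s\wedge t$, so the result is $s\wedge t-st$. Pathwise, the periodic process is $B_s-\bar B$ for a bridge $B$ with $B_0=0$. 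So $X_s-X_0=B_s$, whose covariance is the bridge kernel of Example~\ref{ex:bridge}.

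The only point needing care, rather than a real obstacle, is the legitimacy of $\tau=0$. Proposition~\ref{prop:pinned}(a) is stated for every $\tau\in[0,1]$, so the covariance identification holds there. Remark~\ref{rem:coincident_kink} notes that only part~(c), the preservation of boundary conditions, fails at the endpoint. This matches the conclusion: the left Neumann condition becomes Dirichlet, which turns the free--free beast into Brownian motion.
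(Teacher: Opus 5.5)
Your proposal is correct and follows the paper's proof: evaluate the three correction terms of \eqref{eq:pinned} at $\tau=0$ from the closed forms, then cancel to leave $-s\vee t+s+t=s\wedge t$ and $-st+\tfrac12(s+t-|s-t|)=s\wedge t-st$. Your pathwise check via $X_s-X_0=W_s$ and $X_s-X_0=B_s$ is also sound. It matches the paper's own remark that the balanced centering cancels in the increment processes about~$\tau$.
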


\begin{proof}
For the Neumann--Neumann kernel, $G_\NN(s,0)=\tfrac12 s^2-s+\tfrac13$, $G_\NN(0,t)=\tfrac12 t^2-t+\tfrac13$ and $G_\NN(0,0)=\tfrac13$, so the four terms of~\eqref{eq:pinned} leave $-s\vee t+s+t = s\wedge t$. For the periodic kernel, $G_{\mathrm{per}}(s,0)=\tfrac12 s^2-\tfrac12 s+\tfrac1{12}$ and $G_{\mathrm{per}}(0,0)=\tfrac1{12}$, and the same cancellation leaves $-st+\tfrac12(s+t-|s-t|) = s\wedge t-st$.
\end{proof}

Recentering at the endpoint changes the boundary conditions, as described in Remark~\ref{rem:coincident_kink}. Pinning at $\tau=0$ gives $G_0(0,t)=0$, introducing a Dirichlet condition at the left end. For the Neumann--Neumann kernel and interior~$t$, the left derivative changes from $0$ to~$+1$, as in Remark~\ref{rem:coincident_kink}. The resulting Dirichlet--Neumann kernel is Brownian motion. In the periodic case, $G_0(1,t)=G_0(0,t)=0$, so both ends acquire Dirichlet conditions and the result is the bridge. Brownian motion is therefore obtained by point-centering the balanced Neumann--Neumann process, not by adding $f(0)=0$ while retaining both original Neumann conditions. Indeed, $s\wedge t$ has derivative~$+1$ at $s=0$ for interior~$t$.

The loop also runs the other way: mean-centering Brownian motion and the Brownian bridge returns the two balanced degenerate processes, by Proposition~\ref{prop:centred_constructions}. This is the normalization~\eqref{eq:zeromean} expressed as an operation on paths.

\paragraph{Centering as a diffuse-conditioning limit.}
Section~\ref{sec:interior_pin} compares recentering and conditioning for a fixed balanced covariance. Their difference changes when an independent constant offset is added. Adding $v\,\mathbf{1}\otimes\mathbf{1}$ to a degenerate covariance puts variance~$v$ in the null direction of~$\mathcal{L}$. As this variance grows, conditioning converges to centering, as the next proposition shows.

\begin{proposition}[Centering is the diffuse limit of conditioning]\label{prop:diffuse}
Let $G$ be the covariance function of a centered Gaussian process~$X$, and let $\ell$ be a linear functional for which $\ell(X)$ is defined and has finite variance, with $\ell(\mathbf{1})=1$. Point evaluation $f\mapsto f(\tau)$ and the time average $f\mapsto\int_0^1 f$ satisfy the normalization. Write $K_\ell(s)=\ell_t G(s,t)$, $V_\ell=\ell_s\ell_t G(s,t)$, and $\Pi f = f-\ell(f)\mathbf{1}$, so that
\[
 (\Pi G \Pi^*)(s,t) = G(s,t)-K_\ell(s)-K_\ell(t)+V_\ell .
\]
For $v\ge 0$ with $V_\ell+v>0$, set $G_v = G+v\,\mathbf{1}\otimes\mathbf{1}$ and let
\[
 G_{0}^{(v)}(s,t) = G_v(s,t) - \frac{\bigl(\ell_uG_v(s,u)\bigr)\bigl(\ell_uG_v(u,t)\bigr)}{\ell_u\ell_wG_v(u,w)}
\]
be the covariance obtained by conditioning a centered Gaussian process with covariance $G_v$ on $\ell(X)=0$. Then
\begin{equation}\label{eq:diffuse}
 G_{0}^{(v)}(s,t) = (\Pi G \Pi^*)(s,t) \;-\; \frac{\bigl\{K_\ell(s)-V_\ell\bigr\}\bigl\{K_\ell(t)-V_\ell\bigr\}}{v+V_\ell} .
\end{equation}
Thus $G_0^{(v)}\to\Pi G \Pi^*$ pointwise at rate $O(1/v)$ as $v\to\infty$. At $v=0$, provided $V_\ell>0$, the subtracted term is the gap between recentering and conditioning. If $K_\ell\equiv V_\ell$, the term vanishes identically: the two constructions agree at every finite~$v$ with $V_\ell+v>0$, as in Remark~\ref{rem:slepian_conditioning}.
\end{proposition}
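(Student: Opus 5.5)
The plan is a direct computation of the ingredients of the rank-one conditioning formula for $G_v$, followed by one algebraic identity.

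\textbf{Step 1: transform the ingredients.} Using linearity of $\ell$ and the normalization $\ell(\mathbf{1})=1$:
\begin{itemize}
\item The cross-covariance becomes $\ell_u G_v(s,u) = K_\ell(s) + v$, since $\ell_u(v\cdot 1)=v$.
\item The observation variance becomes $\ell_u\ell_w G_v(u,w) = V_\ell + v$.
\end{itemize}
This is legitimate because $\ell(X+\sqrt v\,Z)=\ell(X)+\sqrt v\,Z$ for an independent standard normal $Z$. So $G_v$ is the covariance of $X+\sqrt v Z$, and Theorem~\ref{thm:conditioning} applies whenever $V_\ell+v>0$. This yields
\[
G_0^{(v)}(s,t)=G(s,t)+v-\frac{(K_\ell(s)+v)(K_\ell(t)+v)}{V_\ell+v}.
\]

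\textbf{Step 2: the key identity.} Write $a=K_\ell(s)-V_\ell$, $b=K_\ell(t)-V_\ell$ and $D=V_\ell+v$. Then $K_\ell(s)+v=a+D$ and $K_\ell(t)+v=b+D$, so
\[
\frac{(a+D)(b+D)}{D}=\frac{ab}{D}+a+b+D.
\]
Substituting, $G+v-a-b-D-ab/D$ simplifies to
\[
G-K_\ell(s)-K_\ell(t)+2V_\ell-V_\ell-\frac{ab}{D},
\]
which equals $(\Pi G\Pi^*)(s,t)-ab/(v+V_\ell)$. This is exactly~\eqref{eq:diffuse}.

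For completeness I would also verify the stated form of $\Pi G\Pi^*$. Its covariance is $\Cov(X_s-\ell(X),X_t-\ell(X))$, which is~\eqref{eq:recentring}.

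\textbf{Step 3: consequences.}
\begin{itemize}
\item \emph{Rate.} The numerator $ab$ is fixed in $v$, so the correction term is $O(1/v)$ pointwise.
\item \emph{Case $v=0$.} When $V_\ell>0$, \eqref{eq:diffuse} reads $\Pi G\Pi^*-G_0=ab/V_\ell$, the recentering–conditioning gap. It matches \eqref{eq:pin_gap} when $\ell=\delta_\tau$.
\item \emph{Case $K_\ell\equiv V_\ell$.} Then $a=b=0$, so the two constructions agree for every admissible $v$.
\end{itemize}

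\textbf{Main obstacle.} I expect no real obstacle. The only care needed is justifying that $\ell$ extends to $X+\sqrt v Z$ with $\ell(\mathbf{1})=1$. This holds for point evaluation and averaging, and it is what lets the constant $v$ pass through $\ell$ unchanged.
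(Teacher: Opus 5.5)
Your proposal is correct and is essentially the paper's proof. Both arguments use $\ell(\mathbf{1})=1$ to obtain the shifted cross-covariance $K_\ell(s)+v$ and the shifted variance $V_\ell+v$, and then verify \eqref{eq:diffuse} by pure algebra. Your substitution $a=K_\ell(s)-V_\ell$, $b=K_\ell(t)-V_\ell$, $D=V_\ell+v$ is a repackaging of the paper's polynomial division of $(p+v)(q+v)$ by $v+r$, whose remainder is $(p-r)(q-r)$.
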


\begin{proof}
Since $\ell(\mathbf{1})=1$, the shifted kernel has $\ell_t G_v(s,t) = K_\ell(s)+v$ and $\ell_s\ell_t G_v = V_\ell+v$. Writing $p=K_\ell(s)$, $q=K_\ell(t)$, $r=V_\ell$, the displayed definition gives
\[
 G_0^{(v)}(s,t) = G(s,t)+v-\frac{(p+v)(q+v)}{r+v} .
\]
Polynomial division of $v^2+v(p+q)+pq$ by $v+r$ gives quotient $v+(p+q-r)$ and remainder $pq-r(p+q-r)=(p-r)(q-r)$. The right-hand side is therefore $G(s,t)-p-q+r-(p-r)(q-r)/(v+r)$, proving~\eqref{eq:diffuse}.
\end{proof}

For $\ell=\delta_\tau$, conditioning after adding offset variance~$v$ converges to the double-centered kernel~\eqref{eq:pinned}; at $v=0$, \eqref{eq:diffuse} recovers~\eqref{eq:pinned_cond} and the gap~\eqref{eq:pin_gap}. The same identity covers mean-centering when $\ell=\int_0^1$. Recentering subtracts the observed functional from each path value; conditioning subtracts each path value's least-squares regression on that observation. When $V_\ell>0$, adding variance in the constant direction reduces the covariance gap by the factor $V_\ell/(v+V_\ell)$. Unless $K_\ell\equiv V_\ell$, agreement occurs only in the diffuse limit, explaining the saturation in Section~\ref{sec:interior_pin}. In the exceptional case $K_\ell\equiv V_\ell$, the two constructions agree at every finite~$v$ with $v+V_\ell>0$.

\section{The bestiary}

The twelve beasts can be approached one at a time through the catalogue entries in Section~\ref{sec:entries}, or compared side by side in the table in Section~\ref{sec:tables}.

\subsection{The entries}\label{sec:entries}

The entries use nine fields: \textsc{condition}, \textsc{kernel}, \textsc{variance}, \textsc{endpoints}, \textsc{from}~$W$, \textsc{marks}, \textsc{spectrum}, \textsc{elsewhere} and \textsc{see}. The \textsc{condition} field states the boundary conditions and any normalization, or the conditioning event. The \textsc{marks} field records stationarity and triangularity, as in Table~\ref{tab:constructions}; \textsc{spectrum} gives the precision eigenvalues $\omega^2$ through an equation for~$\omega$, together with the covariance trace. Covariance eigenvalues are the reciprocals $\omega^{-2}$. The \textsc{see} field points to the derivation and figure. When an endpoint has zero variance, endpoint correlation is undefined, not zero. The order follows Table~\ref{tab:roadmap}: operator-based constructions first, then conditioned processes.

\begin{beast}{Brownian motion}{Wiener process; the Robin corner $(0,\infty)$}
\item[Condition] $f(0)=0$, $f'(1)=0$: Dirichlet--Neumann, the Robin corner $(\lambda_0,\lambda_1)=(0,\infty)$.
\item[Kernel] $G(s,t)=s\wedge t$.
\item[Variance] $\Var(X_s)=s$.
\item[Endpoints] $\Var(X_0)=0$, $\Var(X_1)=1$. Endpoint correlation is undefined because $X_0=0$ almost surely.
\item[From $W$] $X_s=W_s$.
\item[Marks] Not stationary; triangular, hence Gauss--Markov.
\item[Spectrum] $\omega=(n-\tfrac12)\pi$, $n\ge1$; trace $\tfrac12$.
\item[Elsewhere] The reference process of the correspondence (Section~\ref{sec:from_green_to_cov}).
\item[See] Example~\ref{ex:BM}; Figure~\ref{fig:detail_bm}.
\end{beast}

\begin{beast}{Brownian bridge}{tied-down Brownian motion; the Robin corner $(0,0)$}
\item[Condition] $f(0)=f(1)=0$: Dirichlet--Dirichlet, the Robin corner $(0,0)$. Conditioning $W$ on $W_1=0$ gives the same process, so the operator and conditioning routes agree here (Example~\ref{ex:bridge_cond}). Remark~\ref{rem:slepian_conditioning} gives a different agreement, between recentering and conditioning Slepian Brownian motion.
\item[Kernel] $G(s,t)=(s\wedge t)(1-s\vee t)=s\wedge t-st$.
\item[Variance] $\Var(X_s)=s(1-s)$.
\item[Endpoints] $\Var(X_0)=\Var(X_1)=0$. Endpoint correlation is undefined because both ends are deterministic.
\item[From $W$] $X_s=W_s-sW_1$.
\item[Marks] Not stationary; triangular, hence Gauss--Markov.
\item[Spectrum] $\omega=n\pi$, $n\ge1$; trace $\tfrac16$.
\item[Elsewhere] The Cram\'er--von Mises limit law is the distribution of $\int_0^1X_s^2\,\dd s$~\cite{AndersonDarling1952}. The isospectral Neumann--Neumann process gives the same law (Remark~\ref{rem:isospectral}).
\item[See] Example~\ref{ex:bridge}; Figure~\ref{fig:detail_bridge}.
\end{beast}

\begin{beast}{Time-reversed Brownian motion}{$s\mapsto W_{1-s}$; the Robin corner $(\infty,0)$}
\item[Condition] $f'(0)=0$, $f(1)=0$: Neumann--Dirichlet, the Robin corner $(\infty,0)$.
\item[Kernel] $G(s,t)=1-s\vee t$.
\item[Variance] $\Var(X_s)=1-s$.
\item[Endpoints] $\Var(X_0)=1$, $\Var(X_1)=0$. Endpoint correlation is undefined because $X_1=0$ almost surely.
\item[From $W$] $X_s=W_s-W_1$, the member $a=1$ of the family $W-aW_1$.
\item[Marks] Not stationary; triangular, hence Gauss--Markov.
\item[Spectrum] $\omega=(n-\tfrac12)\pi$; trace $\tfrac12$. Time reversal preserves the spectrum of Brownian motion.
\item[See] Example~\ref{ex:time_reversed}; Figure~\ref{fig:detail_time_reversed}.
\end{beast}

\begin{beast}{Robin family}{$(\lambda_0,\lambda_1)$; elastic or partially reflecting ends}
\item[Condition] $f(0)=\lambda_0f'(0)$, $f(1)=-\lambda_1f'(1)$ with $\lambda_i\in[0,\infty]$, excluding the degenerate pair $(\infty,\infty)$. The family also includes the finite-parameter branch $1+\lambda_0+\lambda_1<0$ with at least one parameter nonnegative (Section~\ref{sec:negative_branch}).
\item[Kernel] $G(s,t)=\dfrac{(\lambda_0+s\wedge t)(\lambda_1+1-s\vee t)}{1+\lambda_0+\lambda_1}$, from~\eqref{eq:robin_green}.
\item[Variance] $\Var(X_s)=\dfrac{(\lambda_0+s)(\lambda_1+1-s)}{1+\lambda_0+\lambda_1}$.
\item[Endpoints] $\Var(X_0)=\dfrac{\lambda_0(1+\lambda_1)}{1+\lambda_0+\lambda_1}$ and $\Var(X_1)=\dfrac{(1+\lambda_0)\lambda_1}{1+\lambda_0+\lambda_1}$, so for $\lambda_0,\lambda_1>0$
 \[
  \Corr(X_0,X_1)=\sqrt{\tau_0\tau_1}, \qquad \tau_i=\frac{\lambda_i}{1+\lambda_i},
 \]
 where the $\tau_i$ are the compactified coordinates of Figure~\ref{fig:parameter_map}(a). Remark~\ref{rem:tau_correlation} derives the formula and explains this choice of axes. The correlation tends to zero as either end approaches Dirichlet, but is undefined at the Dirichlet limit itself; it tends to~$1$ as both ends approach the degenerate Neumann--Neumann corner.
\item[From $W$] $Y_s=\sqrt{\lambda_0}Z+W_s$, conditioned on $Y_1+\sqrt{\lambda_1}Z'=0$, for finite $\lambda_0,\lambda_1\ge0$. Here $Z$ and $Z'$ are independent standard normal variables, also independent of~$W$.
\item[Marks] Not stationary; triangular for every parameter pair, so the whole local family is Gauss--Markov (Remark~\ref{rem:markov_column}).
\item[Spectrum] Positive roots of $\omega+\arctan\lambda_0\omega+\arctan\lambda_1\omega=n\pi$, with $n\ge1$ in the quadrant and $n\ge0$ on the negative branch. The extra positive root at $n=0$ is the lowest frequency and gives the largest covariance eigenvalue (Remark~\ref{rem:spectra}); trace parameter-dependent.
\item[Elsewhere] The three preceding entries occupy three corners of $[0,\infty]^2$. The fourth corner has no ordinary Green's function and is treated separately as a degenerate case (Section~\ref{sec:NN_obstruction}).
\item[See] Theorem~\ref{thm:robin}; Figure~\ref{fig:robin_family}; Figure~\ref{fig:parameter_map}(a).
\end{beast}

\begin{beast}{Brownian motion with a random slope}{a Robin member outside the quadrant, $\lambda_1=-1-\sigma^{-2}$}
\item[Condition] $f(0)=0$, $f(1)=(1+\sigma^{-2})f'(1)$: a Robin pair on the negative branch $1+\lambda_0+\lambda_1<0$, outside the nonnegative parameter quadrant (Section~\ref{sec:negative_branch}).
\item[Kernel] $G(s,t)=s\wedge t+\sigma^2st$.
\item[Variance] $\Var(X_s)=s+\sigma^2s^2$.
\item[Endpoints] $\Var(X_0)=0$, $\Var(X_1)=1+\sigma^2$. Endpoint correlation is undefined because $X_0=0$ almost surely.
\item[From $W$] $X_s=W_s+\sigma\,s\,Z$ with $Z$ an independent standard normal. The random linear trend gives the process its name and contributes the quadratic term to its variance.
\item[Marks] Not stationary; triangular, hence Gauss--Markov, like the rest of the local family.
\item[Spectrum] $\tan\omega=(1+\sigma^{-2})\,\omega$, excluding $\omega=0$; trace $\tfrac12+\tfrac13\sigma^2$.
\item[Elsewhere] This entry illustrates the Robin indexing outside the nonnegative quadrant. At $(\lambda_0,\lambda_1)=(0,-2)$, corresponding to $\sigma=1$, the extra positive root at $n=0$ gives the lowest frequency and the dominant covariance mode, carrying $0.883$ of the trace (Remark~\ref{rem:spectra}).
\item[See] Example~\ref{ex:random_slope}; Figure~\ref{fig:detail_random_slope}; Figure~\ref{fig:parameter_map}(b).
\end{beast}

\begin{beast}{$\alpha$-family}{nonlocal, the two ends tied}
\item[Condition] $f(0)=\alpha f(1)$, $f'(0)=f'(1)/\alpha$, with $\alpha\notin\{0,1\}$.
\item[Kernel] $G_\alpha(s,t)=\dfrac{\alpha^2}{(\alpha-1)^2}-\dfrac{(\alpha+1)(s+t)}{2(\alpha-1)}-\dfrac{|s-t|}{2}$, from~\eqref{eq:alpha}.
\item[Variance] $\Var(X_s)=\dfrac{\alpha^2}{(\alpha-1)^2}-\dfrac{(\alpha+1)s}{\alpha-1}$.
\item[Endpoints] $\Var(X_0)=\alpha^2/(\alpha-1)^2$ and $\Var(X_1)=1/(\alpha-1)^2$, with $\Corr(X_0,X_1)=\sgn\alpha$. The endpoint relation is deterministic; its correlation depends only on the sign of~$\alpha$.
\item[From $W$] $X_s=W_s-aW_1$ with $a=\alpha/(\alpha-1)$.
\item[Marks] Stationary only at $\alpha=-1$; never triangular.
\item[Spectrum] \eqref{eq:alpha_spectrum}; trace parameter-dependent.
\item[Elsewhere] The limits $\alpha\to0$ and $\alpha\to\infty$ recover Brownian motion and its time reversal at two corners of the Robin square (Remark~\ref{rem:alpha_limits}). The value $\alpha=-1$ gives the anti-periodic process below. At the excluded value $\alpha=1$, the operator is periodic and degenerate, explaining the covariance blow-up on approach (Remark~\ref{rem:spectra}).
\item[See] Theorem~\ref{thm:alpha}; Figure~\ref{fig:nonlocal_family}.
\end{beast}

\begin{beast}{Anti-periodic process}{the $\alpha=-1$ member}
\item[Condition] $f(0)=-f(1)$, $f'(0)=-f'(1)$.
\item[Kernel] \eqref{eq:anti}: $G(s,t)=\tfrac14\bigl(1-2|s-t|\bigr)$.
\item[Variance] $\Var(X_s)=\tfrac14$, constant.
\item[Endpoints] $\Var(X_0)=\Var(X_1)=\tfrac14$ and $\Corr(X_0,X_1)=-1$: $X_1=-X_0$ almost surely.
\item[From $W$] $X_s=W_s-\tfrac12W_1$.
\item[Marks] Stationary; not triangular.
\item[Spectrum] $\omega=(2k+1)\pi$, each eigenvalue twice; trace $\tfrac14$.
\item[Elsewhere] The $v=0$ member of~\eqref{eq:slepian_family}, obtained by dropping the independent normal from the Slepian construction (Example~\ref{ex:slepian}).
\item[See] Example~\ref{thm:anti}; Figure~\ref{fig:detail_anti}.
\end{beast}

\begin{beast}{Slepian Brownian motion}{the Slepian process}
\item[Condition] $f'(0)+f'(1)=0$, $f(0)+f(1)=f'(0)$: nonlocal conditions outside the $\alpha$-family.
\item[Kernel] $G(s,t)=\tfrac12\bigl(1-|s-t|\bigr)$.
\item[Variance] $\Var(X_s)=\tfrac12$, constant.
\item[Endpoints] $\Var(X_0)=\Var(X_1)=\tfrac12$ and $\Corr(X_0,X_1)=0$: the two ends are independent even though the process is stationary.
\item[From $W$] We give three constructions, each showing a different property. In $X_s=W_s-\tfrac12W_1+\tfrac12Z$, with $Z$ an independent standard normal, the added constant gives a rank-one enlargement of the anti-periodic covariance. The representation $\sqrt2\,X_s=W_{s+1}-W_s$ uses Brownian increments over a unit-width moving window. Alternatively, for independent standard Brownian motions $W,\widetilde W$,
\[
 X_s = \tfrac{1}{\sqrt2}\bigl(W_s + \widetilde{W}_{1-s}\bigr) ,
\]
a normalized sum of forward and independent backward Brownian motions. Bilinearity gives $\tfrac12\{s\wedge t + (1-s)\wedge(1-t)\} = \tfrac12(1-|s-t|)$, since $s\wedge t-s\vee t=-|s-t|$. The dependence on lag alone shows stationarity. The endpoint values depend on independent Brownian paths and are therefore independent. Example~\ref{ex:slepian} rescales the moving-window construction to $[0,1]$.
\item[Marks] Stationary; not triangular.
\item[Spectrum] $\tfrac12\omega\tan\tfrac{\omega}{2}=1$ or $\omega=(2k+1)\pi$, all simple; trace $\tfrac12$.
\item[Elsewhere] Studied for its boundary-crossing and first-passage probabilities~\cite{Slepian1961,Shepp1971}.
\item[See] Example~\ref{ex:slepian}; Figure~\ref{fig:detail_slepian}.
\end{beast}

\begin{beast}{Periodic process}{circular Brownian motion; the $\alpha=1$ degenerate case}
\item[Condition] $f(0)=f(1)$, $f'(0)=f'(1)$. The null space is the constants, so the corrected source~\eqref{eq:corrected} and the balanced condition~\eqref{eq:zeromean} are both required.
\item[Kernel] \eqref{eq:periodic}: $G(s,t)=\tfrac12|s-t|^2-\tfrac12|s-t|+\tfrac1{12}$.
\item[Variance] $\Var(X_s)=\tfrac1{12}$, constant.
\item[Endpoints] $\Var(X_0)=\Var(X_1)=\tfrac1{12}$ and $\Corr(X_0,X_1)=+1$: $X_1=X_0$ almost surely, and the path closes up on the circle.
\item[From $W$] $X_s=B_s-\int_0^1B_u\,\dd u$, the mean-centered Brownian bridge.
\item[Marks] Stationary; not triangular.
\item[Spectrum] $\omega=2\pi n$, each twice, on $\mathbf{1}^\perp$; trace $\tfrac1{12}$.
\item[Elsewhere] Watson's $U^2$ has limiting null law given by $\int_0^1X_s^2\,\dd s$~\cite{Watson1961}. Its finite-sample rotation invariance is mirrored by this process's stationarity (Remark~\ref{rem:cvm_watson}).
\item[See] Theorem~\ref{thm:periodic}; Figure~\ref{fig:detail_periodic}.
\end{beast}

\begin{beast}{Neumann--Neumann process}{both ends free, balanced}
\item[Condition] $f'(0)=f'(1)=0$, with the balanced normalization. This is the Robin corner $(\infty,\infty)$, whose constant null space prevents an ordinary Green's function (Section~\ref{sec:NN_obstruction}). The source correction and balanced condition together select the kernel below.
\item[Kernel] \eqref{eq:NN}: $G(s,t)=\tfrac12(s^2+t^2)-s\vee t+\tfrac13$.
\item[Variance] $\Var(X_s)=s^2-s+\tfrac13$, least at $s=\tfrac12$ and greatest at the two free ends.
\item[Endpoints] $\Var(X_0)=\Var(X_1)=\tfrac13$ and $\Corr(X_0,X_1)=-\tfrac12$.
\item[From $W$] $X_s=W_s-\int_0^1W_u\,\dd u$.
\item[Marks] Not stationary; not triangular.
\item[Spectrum] $\omega=n\pi$ on $\mathbf{1}^\perp$; trace $\tfrac16$.
\item[Elsewhere] Isospectral with the Brownian bridge, so its integrated square has the same Cram\'er--von Mises law (Remark~\ref{rem:isospectral}). The additive constant $\tfrac13$ is fixed by balance and lies on the boundary of the positive-semidefinite cone (Section~\ref{sec:constant_not_free}).
\item[See] Theorem~\ref{thm:NN}; Figure~\ref{fig:detail_nn}; Figure~\ref{fig:parameter_map}(a), which marks the degenerate Robin corner.
\end{beast}

\begin{beast}{Integral-conditioned Brownian motion}{zero time average through conditioning}
\item[Condition] A constraint, not a boundary condition: $W$ given $\int_0^1W_u\,\dd u=0$.
\item[Kernel] $G(s,t)=s\wedge t-3\bigl(s-\tfrac12s^2\bigr)\bigl(t-\tfrac12t^2\bigr)$, the rank-one update~\eqref{eq:conditioning} with $\ell=\int_0^1(\cdot)\,\dd u$.
\item[Variance] $\Var(X_s)=s-3\bigl(s-\tfrac12s^2\bigr)^2$.
\item[Endpoints] $\Var(X_0)=0$, $\Var(X_1)=\tfrac14$. Endpoint correlation is undefined: the constraint leaves $X_0=W_0=0$ almost surely.
\item[From $W$] $X_s=W_s-3\bigl(s-\tfrac12s^2\bigr)\int_0^1W_u\,\dd u$.
\item[Marks] Not stationary; not triangular. The integral constraint couples the whole path.
\item[Spectrum] $\tan\omega=\omega$, with simple positive roots $\omega_n\in(n\pi,n\pi+\tfrac12\pi)$ for $n\ge1$. The first root $\omega_1\approx4.4934$ gives the largest covariance eigenvalue $\omega_1^{-2}\approx0.0495$. The associated precision on $\mathbf{1}^\perp$ is the compression $f\mapsto-f''-f'(0)$ of~$\mathcal{L}$, with domain $\{f\in H^2: f(0)=0,\ f'(1)=0,\ \int_0^1f=0\}$. This is a constrained operator, not the original differential expression with three ordinary boundary conditions (Section~\ref{sec:counting}). The constant function is a null direction of the covariance operator, since the process has zero time average. Trace $\tfrac1{10}$.
\item[Elsewhere] The integral constraint in Section~\ref{sec:conditioning}; detectable because $K_\ell\not\equiv0$.
\item[See] Example~\ref{ex:integral}; Figure~\ref{fig:detail_integral}.
\end{beast}

\begin{beast}{Pinned Brownian motion}{pinned at $\tau\in(0,1)$}
\item[Condition] A constraint: $W$ given $W_\tau=0$.
\item[Kernel] $G(s,t)=s\wedge t-(s\wedge\tau)(t\wedge\tau)/\tau$.
\item[Variance] $\Var(X_s)=s-(s\wedge\tau)^2/\tau$, vanishing at $s=\tau$ as the constraint demands.
\item[Endpoints] $\Var(X_0)=0$, $\Var(X_1)=1-\tau$. Endpoint correlation is undefined because $X_0=0$ almost surely.
\item[From $W$] $X_s=W_s-(s\wedge\tau)W_\tau/\tau$.
\item[Marks] Not stationary; triangular only blockwise. A Brownian bridge on $[0,\tau]$ is independent of the Brownian motion starting at zero on $[\tau,1]$. The process is Markov, although its covariance does not factor globally (Remark~\ref{rem:markov_column}).
\item[Spectrum] Not tabulated: the direct sum of the Dirichlet--Dirichlet spectrum on $[0,\tau]$ and the Dirichlet--Neumann spectrum on $[\tau,1]$ (Section~\ref{sec:domain_splitting}). Trace $\tfrac12-\tau+\tfrac23\tau^2$.
\item[Elsewhere] The split Dirichlet--Dirichlet / Dirichlet--Neumann realization in Section~\ref{sec:domain_splitting} gives exactly this covariance: the operator and conditioning routes agree for the Brownian starting process. Section~\ref{sec:interior_pin} compares different starting laws, the balanced degenerate processes, using~\eqref{eq:pinned} and~\eqref{eq:pinned_cond}; see also the last row of Table~\ref{tab:roadmap}. For the Neumann--Neumann starting process, recentering gives time-reversed Brownian motion on $[0,\tau]$, not a bridge. The recentered periodic process does not split into independent blocks.
\item[See] Example~\ref{ex:interior}; Figure~\ref{fig:detail_pinned}.
\end{beast}

\subsection{Comparison table}\label{sec:tables}

Table~\ref{tab:constructions} compares the field marks of the twelve beasts. Kernels, conditions and endpoint correlations appear in the derivations and catalogue entries; Section~\ref{sec:conditioning} collects the rank-one constraints separately. The following remarks draw out comparisons across the table, including spectral limits, triangularity and simulation.

\begin{landscape}
\vspace*{\fill}
\begin{table}[H]
\centering
\caption{Field marks. \emph{Construction} uses a standard Brownian motion~$W$, a Brownian bridge~$B$, or standard normals $Z,Z'$ independent of the path. The Gaussian constructions guarantee positive semidefiniteness. \emph{Stationary} means that $G(s,t)$ depends on $s-t$ alone. Among the individual processes listed, the anti-periodic, Slepian and periodic processes qualify; stationarity does not uniquely identify them (Example~\ref{ex:slepian}). \emph{Triangular} means a global factorization $u(s\wedge t)\,v(s\vee t)$. For continuous covariance and everywhere-positive variance, this is Doob's characterization of the Gauss--Markov property~\cite{Doob1953,MehrMcFadden1965}. Continuity matters: a process equal to one standard normal on $[0,\tfrac12)$ and an independent one on $[\tfrac12,1]$ is Markov with unit variance but has no such factorization. Remark~\ref{rem:markov_column} explains the pinned process's blockwise exception. \emph{Spectrum} gives precision eigenvalues $\omega^2$; covariance eigenvalues are their reciprocals, with $\sum_{n=1}^{\infty}\omega_n^{-2}=\int_0^1 G(s,s)\,\dd s=\mathrm{trace}$. A spectral dash means not tabulated. The integral-conditioned spectrum belongs to a constrained compression and is given in its entry. The pinned spectrum is the direct sum of the Dirichlet--Dirichlet and Dirichlet--Neumann spectra split at~$\tau$ (Section~\ref{sec:domain_splitting}). A trace dash marks a parameter-dependent value. The random-slope row represents a Robin member outside $[0,\infty]^2$ (Example~\ref{ex:random_slope}); its extra lowest-frequency mode gives the largest covariance eigenvalue and requires the indexing described in Remark~\ref{rem:spectra}.}\label{tab:constructions}
\vspace{10pt}
\footnotesize
\setlength{\tabcolsep}{4pt}
\begin{tabular}{llccll}
\toprule
Process & Construction & Stationary? & Triangular? & Precision spectrum: $\omega^2$ with & Trace \\
\midrule
Brownian motion (D--N) & $W_s$ & no & yes & $\omega=(n-\tfrac12)\pi$ & $\tfrac12$ \\
Brownian bridge (D--D) & $W_s - sW_1$ & no & yes & $\omega=n\pi$ & $\tfrac16$ \\
Time-reversed BM (N--D) & $W_s - W_1$ \quad ($a=1$) & no & yes & $\omega=(n-\tfrac12)\pi$ & $\tfrac12$ \\
Robin$(\lambda_0,\lambda_1\ge0)$ & $Y_s=\sqrt{\lambda_0}Z+W_s$, given $Y_1+\sqrt{\lambda_1}Z'=0$ & no & yes & $\omega+\arctan\lambda_0\omega+\arctan\lambda_1\omega=n\pi$ & --- \\
Random slope ($\lambda_1=-1-\sigma^{-2}$) & $W_s + \sigma\,s\,Z$ & no & yes & $\tan\omega=(1+\sigma^{-2})\,\omega$ & $\tfrac12+\tfrac13\sigma^2$ \\[4pt]
$\alpha$-family & $W_s - a\,W_1$, \quad $a=\alpha/(\alpha-1)$ & only at $\alpha=-1$ & no & $\cos\omega=2\alpha/(1+\alpha^2)$ & --- \\
Anti-periodic ($\alpha=-1$) & $W_s - \tfrac12 W_1$ & yes & no & $\omega=(2k+1)\pi$, each twice & $\tfrac14$ \\
Slepian & $W_s - \tfrac12 W_1 + \tfrac12 Z$ & yes & no & $\tfrac12\omega\tan\tfrac{\omega}{2}=1$ or $\omega=(2k+1)\pi$; simple & $\tfrac12$ \\[4pt]
Neumann--Neumann (balanced) & $W_s - \int_0^1 W_u\,\dd u$ & no & no & $\omega=n\pi$, on $\mathbf{1}^\perp$ & $\tfrac16$ \\
Periodic (balanced) & $B_s - \int_0^1 B_u\,\dd u$ & yes & no & $\omega=2\pi n$, each twice, on $\mathbf{1}^\perp$ & $\tfrac1{12}$ \\[4pt]
Integral-conditioned BM & $W_s - 3\bigl(s-\tfrac12 s^2\bigr)\int_0^1W_u\,\dd u$ & no & no & --- & $\tfrac1{10}$ \\
Pinned BM (given $X_\tau=0$) & $W_s - (s\wedge\tau)\,W_\tau/\tau$ & no & blockwise & --- & $\tfrac12-\tau+\tfrac23\tau^2$ \\
\bottomrule
\end{tabular}
\end{table}
\vspace*{\fill}
\end{landscape}

\begin{remark}[Constructions give positivity and continuous paths]\label{rem:continuity}
The Construction column establishes both positive semidefiniteness and continuous sample paths. Its operations use continuous Brownian paths or bridges, endpoint evaluations, path integrals, and normal variables multiplied by continuous coefficient functions. The Robin conditioning step also has a continuous Gaussian regression-residual representation. These particular operations preserve continuity, so every beast has a continuous version without a separate appeal to a Kolmogorov criterion. By contrast, the pointwise $L^2(\mathbb{P})$ convergence in~\eqref{eq:KL} defines the process at each fixed~$s$ but does not by itself establish path continuity.
\end{remark}

\begin{remark}[Spectra, limits and mode indexing]\label{rem:spectra}
The Robin and $\alpha$ spectral equations recover the simpler spectra as limits. For the Robin family, substitute $f(s)=\sin(\omega s+\phi)$ into $f(0)=\lambda_0f'(0)$ and $f(1)=-\lambda_1f'(1)$. This gives $\tan\phi=\lambda_0\omega$ and $\tan(\omega+\phi)=-\lambda_1\omega$, hence the equation in the table. Setting $\lambda_0=\lambda_1=0$ yields $\omega=n\pi$; setting $\lambda_0=0$ and letting $\lambda_1\to\infty$ yields $\omega=(n-\tfrac12)\pi$. These are the Dirichlet--Dirichlet and Dirichlet--Neumann spectra. For a general fixed $\lambda_0$, the latter limit instead gives $\omega+\arctan\lambda_0\omega=(n-\tfrac12)\pi$; at $\lambda_0=1$ its first root is $0.860$, not $\pi/2$.

The indexing changes outside the nonnegative quadrant. The left-hand side has derivative $1+\lambda_0+\lambda_1$ at $\omega=0$ and is increasing for nonnegative parameters, so $n\ge1$ exhausts the positive spectrum there. On the admissible negative branch $1+\lambda_0+\lambda_1<0$, it initially decreases before turning upwards, giving an extra positive root at $n=0$. This is the lowest frequency and the dominant covariance mode. At $(\lambda_0,\lambda_1)=(0,-2)$, the random-slope process of Example~\ref{ex:random_slope} with $\sigma=1$, the covariance eigenvalue is $\omega^{-2}\approx0.736$, a fraction $0.883$ of the trace $\tfrac56$. Keeping $n\ge1$ on this branch would therefore discard the largest covariance eigenvalue, not merely relabel the modes.

For the $\alpha$-family, write $f=A\cos\omega s+B\sin\omega s$ and impose $f(0)=\alpha f(1)$, $f'(0)=f'(1)/\alpha$. The resulting $2\times2$ system is $A(1-\alpha\cos\omega)=\alpha B\sin\omega$, $A\sin\omega=B(\cos\omega-\alpha)$, with determinant $(1+\alpha^2)\cos\omega-2\alpha$. Thus the spectral condition is
\begin{equation}\label{eq:alpha_spectrum}
 \cos\omega = \frac{2\alpha}{1+\alpha^2} .
\end{equation}
Since $|2\alpha/(1+\alpha^2)|\le 1$, with equality only at $\alpha=\pm1$, the eigenvalues are simple for all other members of the family. At $\alpha=-1$, $\cos\omega=-1$ gives the frequencies $\omega=(2k+1)\pi$ of the anti-periodic process, each with multiplicity two. At the excluded value $\alpha=1$, $\cos\omega=1$ gives the positive frequencies $\omega=2\pi n$ of the periodic process, again with multiplicity two; its constant zero mode is removed by balance. The covariance blow-up as $\alpha\to1$ reflects this approaching null mode. Finally, $\alpha\to0$ or $\alpha\to\pm\infty$ gives $\cos\omega=0$, recovering Brownian motion and its time reversal.
\end{remark}

\begin{remark}[Reading the triangularity column]\label{rem:markov_column}
Triangularity is closely related to, but not identical with, the distinction between local and nonlocal conditions. Robin conditions act separately at each endpoint, and~\eqref{eq:robin_green} factors as $u(s\wedge t)v(s\vee t)$ with $u(s)=\lambda_0+s$ and $v(t)=(\lambda_1+1-t)/(1+\lambda_0+\lambda_1)$. The local family is therefore Gauss--Markov. By contrast, the $\alpha$-family ties $X_0$ to~$X_1$, while balance imposes a condition on the whole path.

An algebraic test distinguishes these cases. A triangular kernel $K(s,t)=u(s\wedge t)v(s\vee t)$ satisfies $K(s,t)K(t,u)=K(s,u)K(t,t)$ for every $s<t<u$, since both sides equal $u(s)u(t)v(t)v(u)$. The presence of $|s-t|$ is no obstruction: Brownian motion has $s\wedge t=\tfrac12(s+t-|s-t|)$ and is triangular. For the $\alpha$-family~\eqref{eq:alpha}, the left-hand side minus the right-hand side is $-\alpha(t-s)(u-t)/(\alpha-1)^2$, nonzero for every admissible $\alpha\ne0$. The balanced kernel~\eqref{eq:NN} also fails the identity. For these processes, the future depends on more of the past than the present value alone.

The last row explains why the heading is \emph{triangular}, not \emph{Markov}. Brownian motion conditioned on $X_\tau=0$ is Markov (Example~\ref{ex:interior}), but its covariance does not factor globally. Here $\Cov(X_s,X_t)=0$ whenever $s<\tau<t$, although both blocks are nondegenerate. A global factorization $u(s)v(t)$ vanishing on all straddling pairs would force $u\equiv 0$ on $[0,\tau)$ or $v\equiv0$ on $(\tau,1]$, eliminating one block. Instead, the kernel is triangular on each side of~$\tau$ and block diagonal across it. The pin creates a zero-variance point, outside the positive-variance hypothesis of Doob's product representation. This interior degeneracy is distinct from the boundary-kink issue in Remark~\ref{rem:coincident_kink}.
\end{remark}

\begin{remark}[The bridge and the Neumann--Neumann process are isospectral]\label{rem:isospectral}
The Dirichlet--Dirichlet operator and the Neumann--Neumann operator restricted to $\mathbf{1}^\perp$ have the same eigenvalues $(n\pi)^2$, $n\ge1$, as Table~\ref{tab:constructions} records. Their covariances nevertheless differ: $s\wedge t-st$ vanishes at both ends, whereas $G_\NN$ has $\Var(X_0)=\Var(X_1)=\tfrac13$. Equality of the nonzero covariance spectra implies that
\[
 \int_0^1 B_s^2\,\dd s \quad\text{and}\quad \int_0^1 X_s^2\,\dd s \ \ (X \text{ Neumann--Neumann})
\]
have the same distribution: $\sum_{n=1}^{\infty}(n\pi)^{-2}\chi^2_{1,n}$ with independent $\chi^2_1$ variables. Both traces are $\sum_{n=1}^{\infty}(n\pi)^{-2} = \tfrac16$, also obtained by evaluating $\int_0^1 G(s,s)\,\dd s$ directly from either kernel. The equality in law is due to Donati-Martin and Yor~\cite{DonatiMartinYor1991}; here it follows from the shared spectrum.
\end{remark}

\begin{remark}[Where the catalogue meets goodness-of-fit statistics]\label{rem:cvm_watson}
The Brownian bridge and periodic process underlie classical goodness-of-fit statistics. The Cram\'er--von Mises statistic has limiting null law given by $\int_0^1 B_s^2\,\dd s$ for the Brownian bridge~\cite{AndersonDarling1952}. By Remark~\ref{rem:isospectral}, this is also the law of $\int_0^1X_s^2\,\dd s$ for the Neumann--Neumann process. Watson's $U^2$~\cite{Watson1961} instead has limiting null law given by $\int_0^1(B_s-\bar B)^2\,\dd s$, where $\bar B=\int_0^1B_u\,\dd u$. By Proposition~\ref{prop:centred_constructions}, Watson's limit law is that of $\int_0^1 X_s^2\,\dd s$, where $X$ is the periodic process obtained by mean-centering a Brownian bridge.

Its doubled precision spectrum $(2\pi n)^2$ gives the classical series representation $$U^2 \sim \sum_{n=1}^{\infty}(2\pi n)^{-2}\chi^2_{2,n},$$ where $U^2$ here denotes the limiting variable and the chi-squared variables are independent. Each frequency contributes two degrees of freedom, and the trace is $\tfrac1{12}$. Rotation invariance of the statistic is exact at finite sample size: changing the circular starting point leaves $U^2$ unchanged. In the limit, this invariance is mirrored by the stationarity of the periodic process, whose covariance $\tfrac12|s-t|^2-\tfrac12|s-t|+\tfrac1{12}$ depends on $s-t$ alone.
\end{remark}

\paragraph{How to simulate.}
There are four practical routes, distinguished by what they discretize and whether they sample grid values exactly.

\emph{Transform a Brownian path.} For a construction in Table~\ref{tab:constructions}, generate Brownian grid values by cumulatively summing independent $N(0,h)$ increments on a grid of spacing~$h$, then apply the stated map. Subtracting $aW_1$ takes one pass; the finite nonnegative Robin construction uses two extra normal draws and a rank-one regression update. A trapezoidal approximation to $\int_0^1W$ does not, however, give exact mean-centered samples. At grid nodes it reproduces $K_\ell$ exactly, because the trapezoid rule integrates the piecewise-affine function $s\mapsto s\wedge t$ exactly when its kink is a node. It underestimates $V_\ell$ by $h^2/12$ on a uniform grid, giving $G_\NN-\tfrac{h^2}{12}\mathbf{1}\otimes\mathbf{1}$. This sampled covariance is singular in the trapezoidal-weight direction, explaining the finite-grid threshold $\tfrac13-1/(12n^2)$ rather than $\tfrac13$ in Remark~\ref{rem:extremality}.

For exact sampling, include the residual integral between nodes: $\int_0^1W=\sum_i\tfrac{h_i}{2}(W_{t_i}+W_{t_{i+1}})+\eta$, where $\eta\sim N\bigl(0,\sum_ih_i^3/12\bigr)$ is independent of the grid values. This follows by integrating the independent Brownian bridges between successive nodes. Figure~\ref{fig:centring_vs_conditioning} uses this construction.

\emph{Sample from a tridiagonal precision matrix.} For the admissible Robin family, the finite-dimensional energy in Remark~\ref{rem:fem_exact} has tridiagonal precision~$M$ satisfying $\bigl(M^{-1}\bigr)_{ij}=G(s_i,s_j)$ exactly for every~$n$. Gaussian sampling with this precision costs $O(n)$ and introduces no discretization error in the grid values. At a Dirichlet end, delete the pinned coordinate before factorization. The degenerate pair $\lambda_0=\lambda_1=\infty$ is excluded because constants make~$M$ singular.

\emph{Truncate the spectrum.} In the separated cases, the eigenfunctions are the sines and cosines associated with the spectral frequencies. Sample $X_s\approx\sum_{n\le N}\mu_n^{-1/2}\varphi_n(s)\xi_n$ using independent standard normals~$\xi_n$. The omitted variance is the tail $\sum_{n>N}\mu_n^{-1}\varphi_n(s)^2$, which is $O(1/N)$ uniformly in~$s$ here because the eigenfunctions are uniformly bounded and the precision eigenvalues grow quadratically.

\emph{Factor the sampled kernel.} Evaluate the closed form on the grid and factor the resulting covariance matrix, checking first for exact linear dependence. A null direction of the continuous covariance operator need not make this matrix singular: the balanced Neumann--Neumann kernel is strictly positive definite on distinct nodes (Remark~\ref{rem:extremality}). Its integral constraint imposes no relation among finitely many values. Boundary identifications do. On a grid containing both endpoints, periodicity gives $G(0,t)=G(1,t)$ and the exact null vector $e_0-e_n$; the $\alpha$-family, including its anti-periodic member, satisfies $X_0=\alpha X_1$. Any pinned coordinate, including a Dirichlet endpoint, gives a zero row. These matrices are exactly singular, so ordinary Cholesky factorization is unavailable. Remove deterministic or linearly dependent coordinates, factor the remaining matrix, and reconstruct the omitted values from their relations. Alternatively, use a rank-revealing positive-semidefinite factorization. Adding jitter to force Cholesky through simulates a different process.

Figure~\ref{fig:bestiary_summary} collects sample paths from six of the beasts side by side.

\begin{figure}[tbp]
 \centering
 \includegraphics[width=0.8\textwidth]{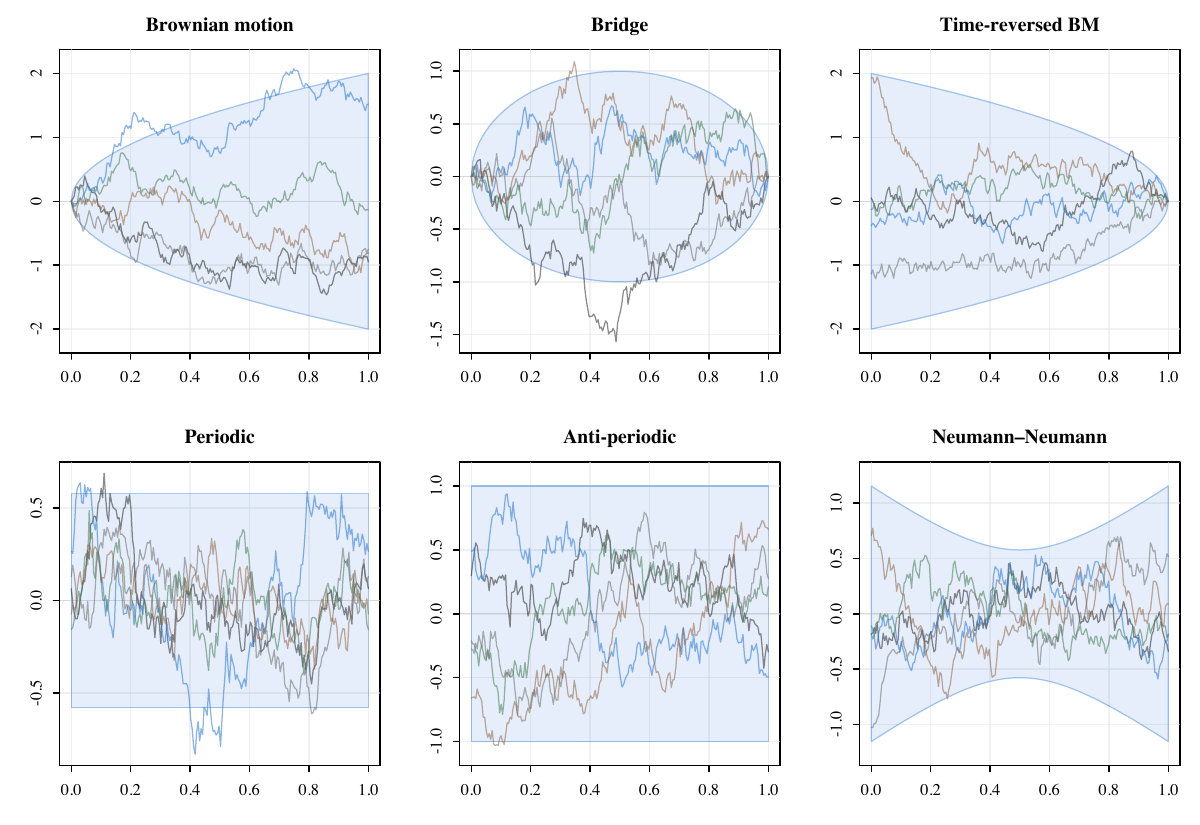}
 \caption{Sample paths from six beasts: Brownian motion, the Brownian bridge, time-reversed Brownian motion, the periodic process, the anti-periodic process and the Neumann--Neumann process. The envelopes collapse to zero at pinned endpoints: both ends for the bridge and one end for Brownian motion and its time reversal.}
 \label{fig:bestiary_summary}
\end{figure}

\section{Discussion}\label{sec:discussion}

The Laplacian on $[0,1]$ generates a rich family of Gaussian processes, organized here by three operations: choosing a realization, recentering a path, and conditioning an existing law (Section~\ref{sec:two_mechanisms}). Keeping them separate clarifies four points. Balance fixes the additive constant in the degenerate covariance rather than leaving it free (Section~\ref{sec:constant_not_free}). An extra condition may overdetermine a boundary-value problem while the corresponding regularized observations have no limiting conditioning effect (Section~\ref{sec:bc_vs_constraints}). The endpoint--integral conditions in Proposition~\ref{prop:bulk} meet the count of two but do not make the operator symmetric, so their Green's functions are not covariances. Finally, point-centering a balanced degenerate process differs from conditioning at that point by the rank-one kernel~\eqref{eq:pin_gap} (Section~\ref{sec:interior_pin}).

The two-parameter Robin family and one-parameter nonlocal $\alpha$-family do not exhaust the self-adjoint realizations of the Laplacian on $[0,1]$. Over complex $L^2[0,1]$, these realizations are parametrized by $\mathrm{U}(2)$, with four real parameters~\cite[Chapter 4]{Zettl2005}. Over the reals, they correspond to Lagrangian planes for the boundary form $[fg'-f'g]_0^1$ on the trace space $\mathbb{R}^4$ of $(f(0),f'(0),f(1),f'(1))$; the Lagrangian Grassmannian $\Lambda(2)$ has dimension $3$.

The Robin family gives a two-dimensional slice, while the $\alpha$-family imposes hard endpoint ties. Neither includes a general finite-stiffness cross-penalty $2c\,f(0)f(1)$ in the energy form. This coupling is distinct from the exact relation $f(0)=\alpha f(1)$. The family~\eqref{eq:slepian_family} already supplies examples outside both families, as the coefficient calculation in Example~\ref{ex:slepian} shows. In the complex setting, the imaginary part of the endpoint coupling supplies the additional parameter; it produces complex-valued Green's functions rather than covariances of real-valued processes.

Positivity imposes a further restriction, as Remark~\ref{rem:admissible} explains. For an ordinary inverse to be a covariance operator, the self-adjoint realization must be strictly positive; the degenerate nonnegative cases are treated after their null directions have been removed. Self-adjointness alone gives symmetry, not positive semidefiniteness: a negative eigenvalue rules out a covariance. Thus covariance-producing realizations form a proper subset of the self-adjoint class, whether described by $\mathrm{U}(2)$ or $\Lambda(2)$. In finite boundary-penalty charts, strict positivity is an open condition, not a further dimension count. Section~\ref{sec:negative_branch} illustrates the restriction in Robin coordinates $(\lambda_0,\lambda_1)$: positivity fails on the band $-(1+\lambda_0)<\lambda_1<0$ although self-adjointness persists. The degenerate Neumann--Neumann corner and the periodic and anti-periodic identifications require separate limiting descriptions; the finite-chart statement alone does not classify their position in the full space of realizations.

A companion paper~\cite{Raket2026classification} develops a three-parameter Green's function family indexed by a symmetric boundary precision matrix $\Sigma^{-1}=\bigl[\begin{smallmatrix}a&c\\c&b\end{smallmatrix}\bigr]$, complete for finite real value-coupled boundary conditions. No result here depends on that classification; the correspondences below follow directly from the kernels already given. Admissibility requires positive definiteness of $\Sigma^{-1}+J$, where $J=\bigl[\begin{smallmatrix}1&-1\\-1&1\end{smallmatrix}\bigr]$, not positive semidefiniteness of $\Sigma^{-1}$ alone. Equivalently, $a>-1$ and $(1+a)(1+b)-(1-c)^2>0$. The Robin family in Section~\ref{sec:robin} occupies the slice $c=0$, with finite boundary stiffnesses $a=1/\lambda_0$ and $b=1/\lambda_1$ as in Section~\ref{sec:negative_branch}. For $v>0$, the family~\eqref{eq:slepian_family} is the disjoint ray $a=b=c=1/(4v)$, with the Slepian process at $a=b=c=1$.

Hard endpoint ties, including the nonlocal $\alpha$-family and its anti-periodic member, arise in the closure of this finite family rather than as coordinate slices. They are not confined to limits along the Slepian ray. Fix $\alpha\ne1$ and, for $M>0$, take $\Sigma^{-1}_M=M\bigl[\begin{smallmatrix}1&-\alpha\\-\alpha&\alpha^2\end{smallmatrix}\bigr]$. Its energy is $\int_0^1f'^2+M\bigl(f(0)-\alpha f(1)\bigr)^2$. Since $\det(\Sigma^{-1}_M+J)=M(\alpha-1)^2>0$ and $(\Sigma^{-1}_M+J)_{00}=1+M>0$, the precision is admissible. Writing $h(s)=(1-s,s)^\top$ gives
\[
 K_M(s,t) = s\wedge t-st+h(s)^\top\bigl(\Sigma^{-1}_M+J\bigr)^{-1}h(t) = G_\alpha(s,t)+\frac{1}{M(\alpha-1)^2} ,
\]
and hence $K_M\to G_\alpha$ as $M\to\infty$. For $\alpha=2$, the path $(a,b,c)=(M,4M,-2M)$ lies off the Slepian ray and converges to the hard tie $X_0=2X_1$, not to Dirichlet pinning. Thus the finite-chart admissibility condition and the Slepian-ray correspondence coexist with a broader class of hard-tie limits; the anti-periodic process is only one example.

The analysis treats homogeneous linear conditions. Linearity ensures the joint Gaussianity of $\bigl(X_s,\ell(X)\bigr)$ needed for the rank-one identity~\eqref{eq:conditioning} in Section~\ref{sec:recentring_conditioning}; conditioning at a nonzero linear value remains Gaussian but generally introduces a nonzero mean. Nonlinear observations need not preserve joint Gaussianity, so the conditioned process may no longer be Gaussian. For example, conditioning on $W_1^2=1$ gives $B_s+s\varepsilon$, where the Brownian bridge~$B$ and equiprobable sign $\varepsilon=\pm1$ are independent. Its covariance is still $s\wedge t$, although $X_1=\varepsilon$ is not Gaussian. A covariance may therefore survive nonlinear conditioning without determining the resulting law. The detectability principle has the same Gaussian scope (Remark~\ref{rem:detect_scope}). Likewise, the count in Section~\ref{sec:counting} concerns homogeneous linear conditions and is not sufficient by itself, even within that class (Proposition~\ref{prop:bulk}).

Two extensions are natural. The first changes the operator: replacing $\mathcal{L}=-\dd^2/\dd s^2$ by the Helmholtz operator $\mathcal{L}_\kappa = -\dd^2/\dd s^2+\kappa^2$, with $\kappa>0$, replaces affine homogeneous solutions by combinations of $\sinh$ and $\cosh$. On the real line, it gives the exponential Mat\'ern-$\tfrac{1}{2}$ covariance $e^{-\kappa|s-t|}/(2\kappa)$. This suggests a hyperbolic branch of the bestiary, organized by the same distinction between realizations, recentering and conditioning. The second extension moves to higher dimensions and tensor constructions, including Brownian sheets and their pinned and periodic relatives, with natural connections to polyharmonic operators and smoothing splines~\cite{Wahba1990,Raket2021}.

\end{document}